\DeclareMathOperator{\num}{Num}
\DeclareMathOperator{\den}{Den}
\DeclareMathOperator{\sgn}{sgn}
\DeclareMathOperator{\frob}{Frob}
\chardef\bslash=`\\ 
\begin{document}


\newtheorem{Theorem}{Theorem}[section]

\newtheorem{cor}[Theorem]{Corollary}

\newtheorem{Conjecture}[Theorem]{Conjecture}

\newtheorem{exercise}[Theorem]{Exercise}
\newtheorem{Question}[Theorem]{Question}
\newtheorem{lemma}[Theorem]{Lemma}
\newtheorem{property}[Theorem]{Property}
\newtheorem{proposition}[Theorem]{Proposition}
\newtheorem{ax}[Theorem]{Axiom}
\newtheorem{claim}[Theorem]{Claim}

\newtheorem{nTheorem}{Surjectivity Theorem}

\theoremstyle{definition}
\newtheorem{Definition}[Theorem]{Definition}
\newtheorem{problem}[Theorem]{Problem}
\newtheorem{question}[Theorem]{Question}
\newtheorem{Example}[Theorem]{Example}
\newtheorem{remark}[Theorem]{Remark}
\newtheorem{diagram}{Diagram}
\newtheorem{Remark}[Theorem]{Remark}
\newcommand{\diagref}[1]{diagram~\ref{#1}}
\newcommand{\thmref}[1]{Theorem~\ref{#1}}
\newcommand{\secref}[1]{Section~\ref{#1}}
\newcommand{\subsecref}[1]{Subsection~\ref{#1}}
\newcommand{\lemref}[1]{Lemma~\ref{#1}}
\newcommand{\corref}[1]{Corollary~\ref{#1}}
\newcommand{\exampref}[1]{Example~\ref{#1}}
\newcommand{\remarkref}[1]{Remark~\ref{#1}}
\newcommand{\corlref}[1]{Corollary~\ref{#1}}
\newcommand{\claimref}[1]{Claim~\ref{#1}}
\newcommand{\defnref}[1]{Definition~\ref{#1}}
\newcommand{\propref}[1]{Proposition~\ref{#1}}
\newcommand{\prref}[1]{Property~\ref{#1}}
\newcommand{\itemref}[1]{(\ref{#1})}
\newcommand{\ul}[1]{\underline{#1}}


\newcommand{\CE}{\mathcal{E}}
\newcommand{\CG}{\mathcal{G}}\newcommand{\CV}{\mathcal{V}}
\newcommand{\CL}{\mathcal{L}}
\newcommand{\CM}{\mathcal{M}}
\newcommand{\A}{\mathcal{A}}
\newcommand{\CO}{\mathcal{O}}
\newcommand{\B}{\mathcal{B}}
\newcommand{\CS}{\mathcal{S}}
\newcommand{\CX}{\mathcal{X}}
\newcommand{\CY}{\mathcal{Y}}
\newcommand{\CT}{\mathcal{T}}
\newcommand{\CW}{\mathcal{W}}
\newcommand{\CJ}{\mathcal{J}}

\newcommand{\st}{\sigma}
\renewcommand{\k}{\varkappa}
\newcommand{\Frac}{\mbox{Frac}}
\newcommand{\XC}{\mathcal{X}}
\newcommand{\wt}{\widetilde}
\newcommand{\wh}{\widehat}
\newcommand{\mk}{\medskip}
\renewcommand{\sectionmark}[1]{}
\renewcommand{\Im}{\operatorname{Im}}
\renewcommand{\Re}{\operatorname{Re}}
\newcommand{\la}{\langle}
\newcommand{\ra}{\rangle}
\newcommand{\LND}{\mbox{LND}}
\newcommand{\Pic}{\mbox{Pic}}
\newcommand{\lnd}{\mbox{lnd}}
\newcommand{\GLND}{\mbox{GLND}}\newcommand{\glnd}{\mbox{glnd}}
\newcommand{\Der}{\mbox{DER}}\newcommand{\DER}{\mbox{DER}}
\renewcommand{\th}{\theta}
\newcommand{\ve}{\varepsilon}
\newcommand{\1}{^{-1}}
\newcommand{\iy}{\infty}
\newcommand{\iintl}{\iint\limits}
\newcommand{\capl}{\operatornamewithlimits{\bigcap}\limits}
\newcommand{\cupl}{\operatornamewithlimits{\bigcup}\limits}
\newcommand{\suml}{\sum\limits}
\newcommand{\ord}{\operatorname{ord}}
\newcommand{\gal}{\operatorname{Gal}}
\newcommand{\bk}{\bigskip}
\newcommand{\fc}{\frac}
\newcommand{\g}{\gamma}
\newcommand{\be}{\beta}
\newcommand{\dl}{\delta}
\newcommand{\Dl}{\Delta}
\newcommand{\lm}{\lambda}
\newcommand{\Lm}{\Lambda}
\newcommand{\om}{\omega}
\newcommand{\ov}{\overline}
\newcommand{\vp}{\varphi}
\newcommand{\kap}{\varkappa}

\newcommand{\Vp}{\Phi}
\newcommand{\Varphi}{\Phi}
\newcommand{\BC}{\mathbb{C}}
\newcommand{\C}{\mathbb{C}}\newcommand{\BP}{\mathbb{P}}
\newcommand{\BQ}{\mathbb {Q}}
\newcommand{\BM}{\mathbb{M}}
\newcommand{\BR}{\mathbb{R}}\newcommand{\BN}{\mathbb{N}}
\newcommand{\BZ}{\mathbb{Z}}\newcommand{\BF}{\mathbb{F}}
\newcommand{\BA}{\mathbb {A}}
\renewcommand{\Im}{\operatorname{Im}}
\newcommand{\idd}{\operatorname{id}}
\newcommand{\ep}{\epsilon}
\newcommand{\tp}{\tilde\partial}
\newcommand{\doe}{\overset{\text{def}}{=}}
\newcommand{\supp} {\operatorname{supp}}
\newcommand{\loc} {\operatorname{loc}}
\newcommand{\de}{\partial}
\newcommand{\z}{\zeta}
\renewcommand{\a}{\alpha}
\newcommand{\G}{\Gamma}
\newcommand{\der}{\mbox{DER}}

\newcommand{\Spec}{\operatorname{Spec}}
\newcommand{\Sym}{\operatorname{Sym}}
\newcommand{\Aut}{\operatorname{Aut}}

\newcommand{\Idd}{\operatorname{Id}}

\newcommand{\tG}{\widetilde G}

\newcommand{\FX}{\mathfrac {X}}
\newcommand{\FV}{\mathfrac {V}}
\newcommand{\SX}{\mathcal {X}}
\newcommand{\SV}{\mathcal {V}}
\newcommand{\SO}{\mathcal {O}}
\newcommand{\SD}{\mathcal {D}}
\newcommand{\Sr}{\rho}
\newcommand{\SR}{\mathcal {R}}
\newcommand{\cl}{\mathcal{C}}
\newcommand{\ok}{\mathcal{O}_K}
\newcommand{\ab}{\mathcal{AB}}
\newcommand{\aut}{\text{Aut}}

\setcounter{equation}{0} \setcounter{section}{0}

\newcommand{\ds}{\displaystyle}
\newcommand{\gl}{\lambda}
\newcommand{\gL}{\Lambda}
\newcommand{\gge}{\epsilon}
\newcommand{\gG}{\Gamma}
\newcommand{\ga}{\alpha}
\newcommand{\gb}{\beta}
\newcommand{\gd}{\delta}
\newcommand{\gD}{\Delta}
\newcommand{\gs}{\sigma}
\newcommand{\mbq}{\mathbb{Q}}
\newcommand{\mbr}{\mathbb{R}}
\newcommand{\mbz}{\mathbb{Z}}
\newcommand{\mbc}{\mathbb{C}}
\newcommand{\mbn}{\mathbb{N}}
\newcommand{\mbp}{\mathbb{P}}
\newcommand{\mbf}{\mathbb{F}}
\newcommand{\mbe}{\mathbb{E}}
\newcommand{\lcm}{\text{lcm}\,}
\newcommand{\mf}[1]{\mathfrak{#1}}
\newcommand{\ol}[1]{\overline{#1}}
\newcommand{\mc}[1]{\mathcal{#1}}
\newcommand{\nequiv}{\equiv\hspace{-.07in}/\;}
\newcommand{\bnequiv}{\equiv\hspace{-.13in}/\;}

\title{A local-global principle for power maps}
\author{N. Jones}

\thanks{This work was partially supported by a Ralphe E. Powe Junior Faculty Enhancement award and also by NSA grant H98230-12-1-0210.  The author gratefully acknowledges Oak Ridge Associated Universities and the National Security Agency for this support.}

\date{}

\begin{abstract}
Let $f$ be a function from the set of rational numbers into itself.  We call $f$ a global power map if $f(\ga) = \ga^k$ for some integer exponent $k$.  We call $f$ a local power map at the prime number $p$ if $f$ induces a well-defined group homomorphism on the multiplicative group $(\mbz/p\mbz)^\times$.  We conjecture that if $f$ is a local power map at an infinite number of primes $p$, then $f$ must be a global power map.  Our main theorem implies that if $f$ is a local power map at every prime $p$ in a set with positive upper density relative to the set of all primes, then $f$ must be a global power map.  In particular, this represents progress towards a conjecture of Fabrykowski and Subbarao.
\end{abstract}

\maketitle

\section{Introduction} \label{introduction}

Let $\mbn := \{ 1, 2, 3, \dots \}$ denote the set of natural numbers and for any prime number $p$, define 
\[
\mbn_{(p)} := \{ n \in \mbn ; p \nmid n \}.  
\]
Given a function
\[
f : \mbn \longrightarrow \mbn,
\]
suppose that $p$ is a prime for which
\begin{equation*} 
f(\mbn_{(p)}) \subseteq \mbn_{(p)}.
\end{equation*}
For such a prime $p$, one may ask whether there exists a multiplicative group homomorphism $f_p : \mbf_p^\times \longrightarrow \mbf_p^\times$ for which the diagram
\begin{equation} \label{commutingdiagram}
\begin{CD}
\mbn_{(p)} @>f>> \mbn_{(p)} \\
@V\text{red}_pVV @V\text{red}_pVV \\
\mbf_p^\times @>f_p>> \mbf_p^\times
\end{CD}
\end{equation}
commutes.  We consider the set $S_f$ of such primes:
\begin{equation*} 
S_f := \{ p \text{ prime} ; \; f(\mbn_{(p)}) \subseteq \mbn_{(p)} \text{ and } \exists \text{ a homomorphism $f_p$ for which \eqref{commutingdiagram} commutes} \}.
\end{equation*}
For instance, if $f(n) = n^2$ for all $n \in \mbn$, then one has $S_f = \{ \text{all primes} \}$.  On the other hand, suppose $f$ is defined by
\begin{equation*} 
f(n) = 
\begin{cases}
7 & \text{ if } n \equiv 0 \pmod{3} \\
3 \pi(n) + 1 & \text{ if } n \equiv 1 \pmod{3} \\
3 \nu(n) + 2 & \text{ if } n \equiv 2 \pmod{3},
\end{cases}
\end{equation*}
where (here and throughout the paper) $\pi(n) := | \{ \text{primes $p$ } : \; p \leq n \} |$ and $\nu(n) := | \{ \text{primes $p$} : \; p \text{ divides } n \}|$.  Then $3 \in S_f$, and quite probably $S_f = \{ 3 \}$.  By using a diagonalization argument, one can construct functions $f$ which certainly satisfy $S_f = \{ 3\}$.  By incorporating the Chinese Remainder Theorem, given any \emph{finite} set of primes $S$ one can find a function $f$ for which $S_f = S$.  
Our motivating question is the following.
\begin{Question} \label{questionone}
Does there exist a function $f : \mbn \longrightarrow \mbn$ for which $S_f$ is infinite and $S_f \neq \{ \text{all primes} \}$?
\end{Question}

We will presently couch this question in slightly different terms.  
Returning to \eqref{commutingdiagram}, note that since $\mbf_p^\times$ is cyclic, any multiplicative homomorphism $f_p : \mbf_p^\times \longrightarrow \mbf_p^\times$ must be of the form 
$
f_p(x) = x^{k_p} 
$
for some exponent $k_p \in \mbz/(p-1)\mbz$.  In particular,
\begin{equation*} 
S_f = \{ p \text{ prime} ; \; \exists k_p \in \mbz/(p-1)\mbz \; \text{ for which } \forall n \in \mbn_{(p)}, \quad f(n) \equiv n^{k_p} \pmod{p} \}.
\end{equation*}
\begin{Definition}
Let $S$ be a set of prime numbers.  A function $f : \mbn \longrightarrow \mbn$ is a \textbf{local power map at $S$} if $S \subseteq S_f$.
\end{Definition}
\begin{Definition}
A function $f : \mbn \longrightarrow \mbn$ is called a \textbf{global power map} if there is an exponent $k \in \mbn \cup \{ 0 \}$ such that, for each $n \in \mbn$ one has $f(n) = n^k$.
\end{Definition}
Note that any global power map $f$ is a local power map at $S = \{ \text{all primes} \}$, and so in particular is a local power map at an infinite set of primes.  The main result of \cite{heathbrown} (or even of its predecessor \cite{guptamurty}), implies that there exists a prime number $q$ for which the set
\[
\{ p \text{ prime}, \; \langle q \pmod{p} \rangle = \mbf_p^\times \}
\]
is infinite.  Using this fact, one may deduce that any local power map at $S = \{ \text{all primes}\}$ must be a global power map.
Thus, Question \ref{questionone} may be stated equivalently as
\begin{Question}
Does there exist a function $f : \mbn \longrightarrow \mbn$ which is a local power map at an \emph{infinite} set of primes, but which is not a global power map?
\end{Question}
We will provide heuristics which lead us to conjecture that the answer is no:
\begin{Conjecture} \label{mainconjecture}
Suppose $f : \mbn \longrightarrow \mbn$ is a local power map at an infinite set of primes (i.e. suppose that $| S_f | = \infty$).   Then $f$ must be a global power map.
\end{Conjecture}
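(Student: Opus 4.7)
The plan is first to show that $f$ must be completely multiplicative---using the infinitude of $S_f$ only in a very soft way---and then to pin down a single global exponent via an Artin-type argument. For the multiplicativity, note that for every $p \in S_f$ and all $m, n \in \mbn_{(p)}$, the definition of $S_f$ yields
\[
f(mn) \equiv (mn)^{k_p} \equiv m^{k_p} n^{k_p} \equiv f(m) f(n) \pmod{p}.
\]
Only finitely many primes in $S_f$ can divide $mn$, so this congruence holds for infinitely many $p$. Both sides are fixed positive integers, so $f(mn) = f(m) f(n)$ for all $m, n \in \mbn$. Hence $f$ is completely multiplicative, $f(1) = 1$, and $f$ is determined by its values on primes.

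\textbf{Step 2 (values on primes, and a single exponent).} For each prime $q$, write $f(q) = q^{a_q} m_q$ with $a_q \geq 0$ and $\gcd(m_q, q) = 1$. For every $p \in S_f$ coprime to $q m_q$, the relation $f(q) \equiv q^{k_p} \pmod{p}$ rearranges to $m_q \equiv q^{k_p - a_q} \pmod{p}$, so $m_q$ lies in the cyclic subgroup $\langle q \rangle \subseteq \mbf_p^\times$. Once one establishes that $m_q = 1$ for every prime $q$, we have $f(q) = q^{a_q}$; then for any two primes $q_1, q_2$ and any $p \in S_f$ coprime to $q_1 q_2$, $a_{q_i} \equiv k_p \pmod{\ord_p(q_i)}$, so $a_{q_1} \equiv a_{q_2}$ modulo $\gcd(\ord_p(q_1), \ord_p(q_2))$. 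Producing infinitely many $p \in S_f$ at which both $q_1$ and $q_2$ attain large order---for instance, are simultaneously primitive roots---would then force $a_{q_1} = a_{q_2}$, and $f(n) = n^k$ for a common global exponent $k$.

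\textbf{The main obstacle.} Both missing ingredients---that $m_q = 1$ and that two primes simultaneously have large order at infinitely many $p \in S_f$---reduce to the following principle: in any infinite set of primes $\CS$, one must find infinitely many $p \in \CS$ at which a fixed rational number fails to lie in a prescribed proper cyclic subgroup of $\mbf_p^\times$. Under the positive-density hypothesis of the paper's main theorem, this input is furnished by Chebotarev's theorem applied to suitable Kummer extensions of $\mbq$, in the spirit of the unconditional Artin-type results of Heath-Brown and Gupta--Murty cited in the introduction. To prove the full Conjecture~\ref{mainconjecture} one would presumably need an unconditional refinement of Artin's primitive root conjecture tailored to arbitrary (possibly sparse) infinite sets of primes, which appears to be the principal bottleneck.
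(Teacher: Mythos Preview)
The statement you are attempting to prove is labeled \emph{Conjecture} in the paper, and the paper does not prove it; it proves only the weaker Theorem~\ref{maintheorem}, in which the hypothesis $|S_f|=\infty$ is strengthened to $\ol{\gd}(S_f)>0$. So there is no ``paper's own proof'' to compare your proposal against, and your proposal is, appropriately, not a proof either: you correctly flag in your final paragraph that the argument stalls at precisely the point where one would need Artin-type input for an arbitrary infinite set $S_f$ of primes.

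Your Step~1 is rigorous and is exactly Lemma~\ref{completelymultiplicativelemma} of the paper. Your Step~2 sketch is also in the right spirit; the paper's Lemma~\ref{noteventuallyconstantpowerlemma} and Corollary~\ref{threenumberslemma} carry out a closely related reduction, showing that if $f$ is not a global power map then one can find square-free $n$ with $f(n)\notin n^{\mbz}\cup -n^{\mbz}$. Your identification of the obstacle is accurate: to go further one must control, for infinitely many $p\in S_f$, the position of $f(n)\pmod p$ relative to the subgroup $\langle n\rangle\subseteq\mbf_p^\times$. The paper handles this under the positive-density hypothesis by applying the effective Chebotarev density theorem to Kummer extensions $\mbq(\zeta_\ell,n_1^{1/\ell},n_2^{1/\ell},f(n_1)^{1/\ell},f(n_2)^{1/\ell})$, obtaining a density bound on $S_f$ rather than finiteness. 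Removing the density hypothesis and proving the full conjecture remains open, for essentially the reason you state.
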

The preceding discussion applies just as well when we replace $\mbn$ with the set $\mbq$ of rational numbers.  Indeed, for any prime number $p$ let us employ the standard notation
\[
\begin{split}
\mbz_{(p)}  := & \left\{ \frac{n}{m} \in \mbq : \;  p \nmid m \right\}, \\
\mbz_{(p)}^\times = & \left\{ \frac{n}{m} \in \mbz_{(p)} : \; p \nmid n \right \},
\end{split}  
\]
where the fraction $n/m$ above is assumed to be in lowest terms.  As is well-known, $\mbz_{(p)}$ is a local ring with maximal ideal $p \mbz_{(p)}$, and one has an isomorphism
\[
\frac{\mbz_{(p)}}{p \mbz_{(p)}}
\; \simeq \;
\frac{\mbz}{p \mbz}.
\]
For $\ga, \gb \in \mbz_{(p)}$, we write $\ga \equiv \gb \pmod{p}$ provided $\ga - \gb \in p \mbz_{(p)}$.  Now if
\[
f : \mbq \longrightarrow \mbq
\]
is any function, we consider the set
\begin{equation*} 
S_f := \{ p \text{ prime} ; \; \exists k_p \in \mbz/(p-1)\mbz \; \text{ such that } \forall \ga \in \mbz_{(p)}^\times, \quad f(\ga) \equiv \ga^{k_p} \pmod{p} \}.
\end{equation*}
\begin{Definition}
Let $S$ be a set of prime numbers.  A function $f : \mbq \longrightarrow \mbq$ is a \textbf{local power map at $S$} if $S \subseteq S_f$, and $f$ is a \textbf{global power map} if, for some integer $k \in \mbz$, one has
\[
\forall \ga \in \mbq, \; f(\ga) = \ga^k.
\]
\end{Definition} 
We make the following conjecture, which (as we will show in Section \ref{previousresultssection}) implies Conjecture \ref{mainconjecture}.
\begin{Conjecture} \label{generalmainconjecture}
Suppose that $f : \mbq \longrightarrow \mbq$ is a local power map at an infinite set of primes (i.e. suppose that $| S_f | = \infty$).   Then $f$ must be a global power map.
\end{Conjecture}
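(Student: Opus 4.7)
My plan is to derive multiplicativity of $f$ from the hypothesis, reduce the problem to the values $f(q)$ at rational primes $q$, and then invoke Heath-Brown's theorem on Artin's conjecture together with a support theorem from the literature. Fix $\ga,\gb\in\mbq^\times$. For every sufficiently large $p\in S_f$ (large enough to avoid the numerators and denominators of $\ga,\gb,f(\ga),f(\gb),f(\ga\gb)$), the defining property of $S_f$ yields
$$
f(\ga\gb)\;\equiv\;(\ga\gb)^{k_p}\;\equiv\;\ga^{k_p}\gb^{k_p}\;\equiv\;f(\ga)f(\gb)\pmod{p}.
$$
Since $|S_f|=\infty$, the fixed rational $f(\ga\gb)-f(\ga)f(\gb)$ is divisible by infinitely many primes and must therefore vanish. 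Hence $f|_{\mbq^\times}$ is multiplicative and is determined by $f(-1)$ together with the values $\{f(q)\}$ for $q$ prime.

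The heart of the matter is then to show that for every prime $q$ there is an integer $k(q)$ with $f(q)=\pm q^{k(q)}$ in $\mbq^\times$. For each $p\in S_f$ the congruence $f(q)\equiv q^{k_p}\pmod{p}$ places $f(q)$ inside the cyclic subgroup $\langle q\rangle\subseteq\mbf_p^\times$. The support theorem of Corrales-Rodr\'\i ga\~nez and Schoof asserts that if $f(q)\in\langle q\rangle$ modulo \emph{almost every} prime, then in fact $f(q)\in\langle-1,q\rangle$ in $\mbq^\times$; the natural plan is to apply this, or a suitable refinement tolerating a sparse exceptional set, to the infinite set $S_f$.

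Granting this pointwise description, uniformity of the exponent should follow by invoking Heath-Brown's theorem (already cited in the introduction) to produce a prime $q_0$ that is a primitive root modulo $p$ for infinitely many $p$. Along such a sequence of $p$ lying in $S_f$, one has $k_p\equiv k(q_0)\pmod{p-1}$, so $k_p$ stabilizes to a single integer $k$. For any other prime $q'$ the relation $(q')^{k(q')-k}\equiv\pm1\pmod{p}$ with $\ord_p(q')$ growing along the sequence forces $k(q')=k$, and multiplicativity then upgrades this to $f(\ga)=\ga^k$ for all $\ga\in\mbq^\times$.

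The main obstacle is the second paragraph. The known support theorems require the containment $f(q)\in\langle q\rangle\pmod{p}$ to hold on a set of primes of density one (or with only finitely many exceptions), whereas the hypothesis of the conjecture only furnishes such a containment on an infinite, a priori very sparse, set $S_f$; closing this density gap is essentially equivalent to the conjecture itself. A related subtlety is that the primitive-root prime $q_0$ produced by Heath-Brown must be chosen so that infinitely many of its primitive-root primes actually lie in $S_f$, another place where a density hypothesis on $S_f$ appears necessary. The paper's main theorem, as announced in the abstract, seems to carry out both steps precisely under the added assumption that $S_f$ has positive upper density, presumably by invoking density-sensitive refinements of the support theorem and of Artin's conjecture; the extension to an arbitrary infinite $S_f$ is what remains genuinely open.
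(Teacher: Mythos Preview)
This statement is a \emph{conjecture}, and the paper does not prove it; only the positive-upper-density weakening (Theorem~\ref{maintheorem}) is established. You correctly recognise this: your own final paragraph identifies precisely the density gap that prevents the argument from going through, so what you have written is an honest proof sketch that isolates the obstruction rather than a proof. The multiplicativity step is exactly Lemma~\ref{completelymultiplicativelemma}, and your observation that $p\in S_f$ forces $f(q)\in\langle q\rangle\subseteq\mbf_p^\times$ is the right starting point.

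Where your proposal goes astray is in the speculation about how the paper handles the positive-density case. The paper does \emph{not} proceed via a density-sensitive refinement of the Corrales--Schoof support theorem, nor does it use Heath-Brown's theorem in the main argument (Heath-Brown is invoked only in the introduction, to explain why the case $S_f=\{\text{all primes}\}$ is already known). Instead, the paper bounds $|S_f(x)|$ directly by splitting primes $p\le x$ according to whether $p\equiv 1\pmod{\ell}$ for some prime $\ell$ in a moving window $[Y,Z)$, and then applying the effective Chebotarev density theorem of Lagarias--Odlyzko to the Kummer extensions
\[
K=\mbq\bigl(\zeta_\ell,\,n_1^{1/\ell},\,n_2^{1/\ell},\,f(n_1)^{1/\ell},\,f(n_2)^{1/\ell}\bigr)
\]
for suitably chosen $n_1,n_2$. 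The key point is that if $p\in S_f$ and $p\equiv 1\pmod{\ell}$, then the Frobenius at $p$ is constrained to lie in a small conjugacy-stable subset of $\gal(K/\mbq)$ whose relative size is $O(1/\ell^2)$, provided $f$ is not a global power map (this is where Corollary~\ref{threenumberslemma} and Lemma~\ref{chebotarevboundlemma} enter). Summing the resulting Chebotarev estimates over $\ell\in[Y,Z)$ and optimising $Y,Z$ in $x$ gives the quantitative bound $|S_f(x)|\ll\frac{\log\log\log\log x}{\log\log\log x}\,\pi(x)$. So the mechanism is sieve-theoretic and field-theoretic rather than an appeal to existing support or primitive-root theorems, and it yields an explicit upper bound on $|S_f(x)|$ rather than working prime-by-prime on the values $f(q)$.
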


\begin{remark}
Conjecture \ref{generalmainconjecture} also implies Conjecture 3.1 of \cite{subbaraofabrykowski}, which states that any quasi-multiplicative function $f :  \mbn \longrightarrow \mbz$ which is not identically zero and satisfies
\[
f(n+p) \equiv f(n) \pmod{p}
\]
for infinitely many primes $p$ must be a global power map.  This connection will be discussed in more detail in Section \ref{previousresultssection}.
\end{remark}

In the present paper, we will prove the following weakened version of Conjecture \ref{generalmainconjecture}, in which ``$S_f$ is infinite'' is replaced by ``$S_f$ has positive upper density in the primes.''  For any set $S$ of prime numbers, define
\[
S(x) := \{ p \in S : \; p \leq x \}
\]
and the upper density
\[
\ol{\gd}(S) := \limsup_{x \rightarrow \infty} \frac{ | S(x) |}{ \pi(x) }.
\]
We will prove the following theorem.
\begin{Theorem} \label{maintheorem}
Let $f : \mbq \longrightarrow \mbq$ be any function which is not a global power map.  Then there exist real constants $b_f, c_f > 0$ so that for $x \geq c_f$, the bound
\[
| S_f(x) | \; \ll \; \frac{\log \log \log \log x}{\log \log \log x} \cdot \pi(x) + b_f
\]
holds, with an absolute implied constant.  In particular, if $f : \mbq \longrightarrow \mbq$ is a function for which $\ol{\gd}(S_f) > 0$, then $f$ is a global power map.
\end{Theorem}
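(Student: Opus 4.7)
The plan is to first extract multiplicative structure on $f$ and then translate the local compatibility of the exponents into an order-theoretic constraint. For any fixed $\alpha, \beta \in \mbq^\times$ and each $p \in S_f$ coprime to the numerators and denominators of $\alpha$, $\beta$, $f(\alpha)$, $f(\beta)$, $f(\alpha\beta)$, the local power map property gives $f(\alpha\beta) \equiv (\alpha\beta)^{k_p} \equiv \alpha^{k_p}\beta^{k_p} \equiv f(\alpha)f(\beta) \pmod{p}$, so the fixed rational $f(\alpha\beta) - f(\alpha)f(\beta)$ is divisible by all but finitely many primes in $S_f$. Once $|S_f|$ exceeds an $f$-dependent threshold --- which I absorb into the additive constant $b_f$ --- this forces $f$ to be a multiplicative homomorphism $\mbq^\times \to \mbq^\times$; the same reasoning gives $f(1) = 1$ and $f(-1) \in \{\pm 1\}$.

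\textbf{Common exponent.} Multiplicativity reduces $f$ to the data of $f(-1)$ together with the values $f(q)$ at rational primes $q$. I next plan to show that $f(q) = \epsilon_q q^{k_q}$ with $\epsilon_q \in \{\pm 1\}$ and $k_q \in \mbz$ both independent of $q$ (and the signs consistent with $f(-1)$). If some $f(q_0)$ involved a prime $\ell \neq q_0$ to nonzero power, then the paired congruences $f(q_0) \equiv q_0^{k_p}$ and $f(\ell) \equiv \ell^{k_p} \pmod{p}$ would force an algebraic relation between $\ord_p(q_0)$ and $\ord_p(\ell)$ that is generically unsatisfiable. Likewise, if $k_{q_1} \neq k_{q_2}$ for two primes $q_1, q_2$, then the compatibility $k_p \equiv k_{q_i} \pmod{\ord_p(q_i)}$ for $i = 1, 2$ forces $\gcd(\ord_p(q_1), \ord_p(q_2))$ to divide the nonzero integer $k_{q_1} - k_{q_2}$, hence to be bounded by $|k_{q_1} - k_{q_2}|$.

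\textbf{Quantitative density bound and main obstacle.} The crux is then to show that the set of primes $p \leq x$ for which $\gcd(\ord_p(q_1), \ord_p(q_2))$ divides a fixed nonzero integer $C$ has size $\ll \pi(x) \log\log\log\log x / \log\log\log x$. I plan a sieve over small primes $\ell$: for each prime $\ell \nmid C$, a positive proportion of primes $p \equiv 1 \pmod{\ell}$ have both $\ell \mid \ord_p(q_1)$ and $\ell \mid \ord_p(q_2)$, and any such $\ell$ disqualifies $p$ from the exceptional set. Iterating over $\ell$ up to roughly $\log\log\log x$ and applying a Tur\'an--Kubilius or Brun-style count of the number of disqualifying $\ell$ per prime $p$ should produce the iterated-logarithm savings. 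The principal difficulty is the uniformity required by the \emph{absolute} implied constant: all $f$-dependent data (the primes dividing $C$, and the numerators/denominators appearing earlier) must be absorbed into $b_f$, while the sieve weights must be universal. A secondary subtlety is the cross-support claim above, which requires a quantitative Chebotarev- or Kummer-theoretic density estimate for the primes $p$ where a prescribed multiplicative dependence of $q_0$ and $\ell$ holds in $\mbf_p^\times$, at a rate compatible with the final bound.
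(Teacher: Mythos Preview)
Your overall direction is related to the paper's, but the plan has two genuine gaps that would prevent it from closing.

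\textbf{The case split leaves the main case unhandled.} You propose to first reduce to $f(q)=\epsilon_q q^{k_q}$ by disposing of the ``cross-support'' case, and then argue via $\gcd(\ord_p(q_1),\ord_p(q_2))\mid (k_{q_1}-k_{q_2})$. But the cross-support case is not a secondary subtlety: it is the generic situation, and your one-sentence sketch (``an algebraic relation between $\ord_p(q_0)$ and $\ord_p(\ell)$ that is generically unsatisfiable'') does not translate into an order-divisibility statement of the clean form you obtain in the exponent-mismatch case. The paper avoids this split entirely. Using complete multiplicativity it shows (Lemma~\ref{noteventuallyconstantpowerlemma} and Corollary~\ref{threenumberslemma}) that if $f$ is not a global power map one can choose a square-free $n_1$ with $f(n_1)\notin n_1^{\mbz}\cup -n_1^{\mbz}$, and an auxiliary prime $n_2$, so that $n_1,n_2,f(n_1)$ are multiplicatively independent. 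Then for every large prime $\ell$ the Kummer field $K=\mbq(\zeta_\ell,n_1^{1/\ell},n_2^{1/\ell},f(n_1)^{1/\ell},f(n_2)^{1/\ell})$ satisfies $[K:\mbq(\zeta_\ell)]\ge \ell^3$, and for $p\in S_f$ with $p\equiv 1\pmod\ell$ the Frobenius is confined to a conjugacy-stable set $\mc{C}_4$ of relative size $\le 2/(\ell(\ell-1))$. This single Kummer set-up covers both of your cases at once; your exponent-mismatch reduction is in fact the special instance where one takes $n_1=q_1q_2$.

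\textbf{The counting mechanism is not a Tur\'an--Kubilius or Brun sieve.} The iterated-log saving does not come from averaging the number of disqualifying $\ell$ per prime $p$; it comes from a two-term union bound
\[
|S_f(x)|\le \underbrace{\#\{p\le x:\ p\not\equiv 1\pmod\ell\ \text{for all primes }\ell\in[Y,Z)\}}_{\text{cyclotomic}} \;+\; \underbrace{\sum_{Y\le \ell<Z}\#\{p\in S_f(x):\ p\equiv 1\pmod\ell\}}_{\text{Kummer}},
\]
with each piece bounded by the \emph{effective} Chebotarev theorem. The first term is $\ll (\log Y/\log Z)\,\pi(x)$ via $\mbq(\zeta_n)$ with $n=\prod_{Y\le\ell<Z}\ell$; the second is $\ll \pi(x)/(Y\log Y)$ after summing the Kummer bound $\ll 1/\ell^2$ over $\ell$. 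Balancing forces $Z\asymp \log\log x$ (this is the ceiling imposed by effective Chebotarev, not $\log\log\log x$) and $Y\asymp \log\log\log x/(\log\log\log\log x)^2$; the stated bound is precisely $\log Y/\log Z$. Your proposed range ``$\ell$ up to roughly $\log\log\log x$'' cannot produce this: with $Z\asymp\log\log\log x$ one would need $Y$ bounded to make $\log Y/\log Z\to 0$, but then the Kummer sum contributes a positive constant times $\pi(x)$.

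In short, the missing ingredients are (i) the uniform choice of $n_1,n_2$ giving $[K:\mbq(\zeta_\ell)]\ge\ell^3$ for all large $\ell$, which eliminates your case analysis, and (ii) the explicit cyclotomic/Kummer decomposition with effective Chebotarev in place of a sieve heuristic.
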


Our proof of this theorem applies an effective version of the Chebotarev density theorem of Lagarias and Odlyzko to certain Kummer extensions attached to the function $f$.

\section{Notation}

Throughout the paper, we will use the following notation.  For $\ga \in \mbq$ and a prime number $p$, there is a unique integer $c$ for which $\ga = p^c \cdot (a/b)$, where $a, b \in \mbz$ and $p \nmid ab$.  We then define $\ord_p(\ga) := c$.  Furthermore, we define $\num(\ga) := n$ and $\den(\ga) := m$ where $\ga = n/m \in \mbq$ is written in lowest terms.  We use the symbols $O(\cdot)$ and $\ll$ in the usual ways, namely if $f, g \colon [\gamma, \infty) \rightarrow \mbc$ are complex functions then we write
\[
f = O(g), \quad \text{ or equivalently } \quad f \ll g
\]
if there is a positive constant $C$ for which $| f(x) | \leq C | g(x) |$ for all $x \in [\gamma, \infty)$.  In case there is an auxiliary parameter $y$ upon which the implied constant $C$ depends, we will indicate this with a subscript, so that
\[
f = O_y(g) \quad \text{ or equivalently } \quad f \ll_y g
\]
is used to indicate that $| f(x) | \leq C(y) | g(x) |$, where the $C(y)$ may depend on $y$ but not on $x$.  We write $f(x) \sim g(x)$ as $x \rightarrow \infty$ to mean that $f(x)$ is asymptotic to $g(x)$ as $x \rightarrow \infty$, i.e. to mean that $\ds \lim_{x\rightarrow \infty} f(x)/g(x) = 1$.
When used as variables, the letters $p$ and $\ell$ will always denote prime numbers.  We will occasionally denote the reduction modulo $p$ map by
\[
\begin{split}
\mbz_{(p)} &\rightarrow \mbf_p \\
n &\mapsto \hat{n}.
\end{split}
\]
For an odd prime number $\ell$, let $\zeta_\ell$ denote a primitive $\ell$-th root of unity.  In our discussion of Kummer extensions, we will employ the following vector notation.  For $m \geq 0$ and ${\bf{c}} = (c_1, c_2, \dots, c_m) \in (\mbq^\times)^m$, we define 
\[
\mbq(\zeta_\ell, {\bf{c}}^{1/\ell}) := \mbq(\zeta_\ell, c_1^{1/\ell}, c_2^{1/\ell}, \dots, c_m^{1/\ell}),
\]
where if $m = 0$ we make the interpretation $\mbq(\zeta_\ell, {\bf{c}}^{1/\ell}) := \mbq(\zeta_\ell)$.  Furthermore, for a vector ${\bf{n}} = (n_1, n_2, \dots, n_m) \in \mbz^m$, we will use the notation
\[
{\bf{c}}^{{\bf{n}}} := \prod_{i = 1}^m c_i^{n_i} \in \mbq^\times.
\]

\section{Related results} \label{previousresultssection}

We now give a brief survey of various related results (each with slightly different hypotheses on the integer-valued function $f$,
but with the conclusion ``then $f$ is a global power map,'' or a closely related conclusion).  Before doing so, let us make a few elementary observations and show why Conjecture \ref{generalmainconjecture} implies Conjecture \ref{mainconjecture}.

\begin{lemma} \label{completelymultiplicativelemma}
Suppose that $f : \mbq \longrightarrow \mbq$ is a function for which $S_f$ is infinite.  Then 
\begin{equation} \label{fpreservesQtimes}
f(\mbq^\times) \subseteq \mbq^\times, 
\end{equation}
and the restriction of $f$ to $\mbq^\times$ is completely multiplicative, i.e. for any $\ga, \gb \in \mbq^\times$ one has 
\begin{equation} \label{fmultiplicative}
f(\ga \gb) = f(\ga)f(\gb).
\end{equation}
\end{lemma}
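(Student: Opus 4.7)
The plan is to exploit the fact that a rational number divisible by infinitely many primes must be zero, applied to the defect of multiplicativity.

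First, I would observe that the very hypothesis $p \in S_f$ presupposes that $f(\alpha) \equiv \alpha^{k_p} \pmod{p}$ is a well-defined statement for every $\alpha \in \mbz_{(p)}^\times$; in particular $f(\alpha) \in \mbz_{(p)}$. Since $\alpha^{k_p} \in \mbf_p^\times$, the congruence then forces $f(\alpha) \in \mbz_{(p)}^\times$. Thus for each $p \in S_f$ we have $f(\mbz_{(p)}^\times) \subseteq \mbz_{(p)}^\times$.

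Next I would establish \eqref{fpreservesQtimes}. Fix any $\ga \in \mbq^\times$, and let $T(\ga)$ denote the finite set of primes dividing $\num(\ga) \cdot \den(\ga)$. For every prime $p \in S_f \setminus T(\ga)$ we have $\ga \in \mbz_{(p)}^\times$, so by the previous paragraph $f(\ga) \in \mbz_{(p)}^\times$, and in particular $f(\ga) \neq 0$ in $\mbf_p$. Since $S_f$ is infinite and $T(\ga)$ is finite, this holds for infinitely many $p$, forcing $f(\ga) \neq 0$ in $\mbq$; hence $f(\ga) \in \mbq^\times$.

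For the multiplicativity \eqref{fmultiplicative}, fix $\ga, \gb \in \mbq^\times$ and consider the rational number
\[
D := f(\ga\gb) - f(\ga) f(\gb) \in \mbq.
\]
By the previous step each of $f(\ga\gb)$, $f(\ga)$, $f(\gb)$ lies in $\mbq^\times$. Let $T$ be the finite set of primes dividing the product of the numerators and denominators of $\ga$, $\gb$, $\ga\gb$, $f(\ga)$, $f(\gb)$ and $f(\ga\gb)$. For every prime $p \in S_f \setminus T$, all six quantities lie in $\mbz_{(p)}^\times$, so $D \in \mbz_{(p)}$ and the defining congruence of $S_f$ gives
\[
f(\ga\gb) \equiv (\ga\gb)^{k_p} \equiv \ga^{k_p}\gb^{k_p} \equiv f(\ga)f(\gb) \pmod{p},
\]
i.e.\ $\ord_p(D) \geq 1$. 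Since $S_f \setminus T$ is infinite, the rational number $D$ is divisible by infinitely many primes, whence $D = 0$. This yields \eqref{fmultiplicative}. The argument is essentially a pigeonhole on primes; the only subtle point, and the one I would be most careful about, is ensuring that all of $\ga$, $\gb$, $\ga\gb$, and their $f$-images are simultaneously $p$-adic units, which is handled by excluding the finite set $T$.
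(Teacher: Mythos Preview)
Your proof is correct and follows essentially the same approach as the paper's: both show $f(\ga)\neq 0$ by noting that the congruence defining $S_f$ forces $f(\ga)$ to be a $p$-unit for cofinitely many $p\in S_f$, and both prove multiplicativity by showing the defect $f(\ga\gb)-f(\ga)f(\gb)$ is divisible by infinitely many primes in $S_f$ and hence vanishes. Your write-up is slightly more explicit about excluding the finite set of bad primes, but the argument is the same.
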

\begin{proof}
To prove \eqref{fpreservesQtimes}, fix $\ga \in \mbq^\times$.  If $f(\ga) = 0$ then for each prime $p$,
\begin{equation} \label{fneq0}
\ord_p(\ga) = 0
\; \Longrightarrow \;
p \notin S_f, 
\end{equation}
implying that $S_f$ is finite, a contradiction.  Thus, \eqref{fpreservesQtimes} holds.  The second assertion \eqref{fmultiplicative} follows from the observation that, for any $\gamma \in \mbq$,
\begin{equation} \label{equiv0modinfinitelymanyp}
\gamma \equiv 0 \pmod{p} \; \text{ for infinitely many primes $p$}
\; \Longrightarrow \;
\gamma = 0,
\end{equation}
which is true since if $\gamma = a/b$ in lowest terms then $\gamma \equiv 0 \pmod{p}$ if and only if $p$ divides $a$.  

To prove that $f$ is completely multiplicative, fix $\ga, \gb \in \mbq^\times$ and apply \eqref{equiv0modinfinitelymanyp} to $\gamma = f(\ga\gb) - f(\ga)f(\gb)$, which is divisible by every prime $p \in S_f$ for which $\ord_p(\ga) = \ord_p(\gb) = 0$.  Since $S_f$ is infinite, there are infinitely many such primes $p$.
This concludes the proof.
\end{proof}

In particular, if $f : \mbq \longrightarrow \mbq$ and $S_f$ is infinite, then $f|_{\mbq^\times}$ is uniquely determined by its values on $\{ -1 \} \cup \{ \text{all primes} \}$.
We will now show why Conjecture \ref{generalmainconjecture} implies Conjecture \ref{mainconjecture}, which amounts to proving the following lemma.  Since we will be varying a bit the domain of the function $f$, let us first write down the general situation, which encapsulates the set-up in both of the conjectures given in the introduction.  

If $A \subseteq \mbq$ is a subset which is closed under multiplication, then the set
\[
A_{(p)} := A \cap \mbz_{(p)}^\times
\]
is also closed under multiplication.  Furthermore, if
\[
f : A \longrightarrow \mbq
\]
is any function, then we may define the set $S_f$ of primes as before by
\[
S_f := \{ p \text{ prime} ; \; \exists k_p \in \mbz/(p-1)\mbz \; \text{ such that } \forall \ga \in A_{(p)}, \quad f(\ga) \equiv \ga^{k_p} \pmod{p} \}.
\]
\begin{lemma} \label{extendingflemma}
Suppose that $f : \mbn \longrightarrow \mbq$ is any function for which $S_f$ is infinite.  Then there is a completely multiplicative function 
\[
\tilde{f} : \mbq \longrightarrow \mbq
\] 
such that $\forall n \in \mbn$, $\tilde{f}(n) = f(n)$ and for which $S_{\tilde{f}}$ is infinite.
\end{lemma}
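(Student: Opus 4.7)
The plan is to mimic the proof of \lemref{completelymultiplicativelemma} with $\mathbb{N}$ in place of $\mathbb{Q}^\times$, and then to extend $f$ to $\mathbb{Q}$ by declaring it completely multiplicative with a carefully chosen value at $-1$. The arguments of \lemref{completelymultiplicativelemma} carry over verbatim to yield $f(n) \neq 0$ for every $n \in \mathbb{N}$ and $f(nm) = f(n)f(m)$ for all $n, m \in \mathbb{N}$: if $f(n_0)$ were zero (respectively, if $f(nm) - f(n)f(m)$ were nonzero), then this nonzero rational would be divisible by every $p \in S_f$ coprime to $n_0$ (respectively, to $nm$), an infinite set, which is impossible. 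In particular $f(1) = 1$.

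Next, for every odd prime $p \in S_f$ the exponent $k_p \in \mathbb{Z}/(p-1)\mathbb{Z}$ has a well-defined parity, so by pigeonhole one of the two parity classes determines an infinite subset $T \subseteq S_f$. Set $\epsilon := 1$ if $k_p$ is even for $p \in T$, and $\epsilon := -1$ otherwise, and define $\tilde{f} : \mathbb{Q} \longrightarrow \mathbb{Q}$ by $\tilde{f}(0) := 0$ and, for nonzero $\alpha = (-1)^s n/m$ with $n,m \in \mathbb{N}$ coprime and $s \in \{0,1\}$,
\[
\tilde{f}(\alpha) \; := \; \epsilon^{s} \cdot \frac{f(n)}{f(m)}.
\]
The denominators are nonzero by the first paragraph, the equality $\tilde{f}|_{\mathbb{N}} = f$ is immediate, and complete multiplicativity of $\tilde{f}$ on $\mathbb{Q}^\times$ follows from unique factorization together with the complete multiplicativity of $f$ on $\mathbb{N}$ (since $\epsilon^2 = 1$, the sign factor also respects multiplication).

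Finally, I would verify that $T \subseteq S_{\tilde{f}}$. For $p \in T$, any $\alpha \in \mathbb{Z}_{(p)}^\times$ can be written as $\alpha = (-1)^{s} n/m$ with $n,m \in \mathbb{N}$ coprime to $p$; since $p \in S_f$ one has $f(n) \equiv n^{k_p}$ and $f(m) \equiv m^{k_p}$ modulo $p$, while by the construction of $\epsilon$ one has $\epsilon \equiv (-1)^{k_p} \pmod p$. Hence
\[
\tilde{f}(\alpha) \equiv \epsilon^{s}\, n^{k_p} m^{-k_p} \equiv ((-1)^{s} n/m)^{k_p} = \alpha^{k_p} \pmod p,
\]
so $p \in S_{\tilde{f}}$, and infinitude of $T$ forces infinitude of $S_{\tilde{f}}$.

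The only step requiring real care is the selection of $\tilde{f}(-1)$: if both parity classes $\{p \in S_f : k_p \text{ even}\}$ and $\{p \in S_f : k_p \text{ odd}\}$ were infinite, no single rational value of $\tilde{f}(-1)$ could catch every $p \in S_f$. But the lemma only demands that $S_{\tilde{f}}$ be infinite, and the pigeonhole extraction of $T$ supplies precisely that.
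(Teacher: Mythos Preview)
Your argument is correct and follows essentially the same route as the paper's proof: establish that $f$ is nonvanishing and completely multiplicative on $\mathbb{N}$ via the ``divisible by infinitely many primes implies zero'' trick, use pigeonhole on the parity of $k_p$ to fix $\tilde{f}(-1)$, and extend multiplicatively to $\mathbb{Q}$. The only cosmetic difference is that the paper first extends $f$ to an intermediate function $f_1 : \mathbb{Z} \to \mathbb{Q}$ and then to $\tilde{f} : \mathbb{Q} \to \mathbb{Q}$, whereas you combine these two steps into a single formula; your closing remark about why one must pass to the subset $T$ rather than retain all of $S_f$ is exactly the point.
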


\begin{proof}
First of all, by the same reasoning as in \eqref{fneq0}, the infinitude of $S_f$ implies that
\begin{equation} \label{fofnnotzero}
\forall n \in \mbn, \; f(n) \neq 0.
\end{equation}
Furthermore, by the same reasoning as in the proof of \eqref{fmultiplicative} one sees that $f$ is completely multiplicative.  In particular,
\begin{equation*} 
f(1) = 1.
\end{equation*}

We begin by extending $f$ to a function $f_1 : \mbz \longrightarrow \mbq$.   Note that for odd $p \in S_f$, since $k_p \in \mbz/(p-1)\mbz$, the parity of $k_p$ is well-defined, and by the pigeon-hole principle, either $k_p$ is infinitely often even or it is infinitely often odd.  We set
\[
\nu_f := 
\begin{cases}
0 & \text{ if $k_p$ is even for infinitely many $p \in S_f$} \\
1 & \text{ otherwise,}
\end{cases}
\]
and then define $f_1 : \{ -1, 0, 1 \} \longrightarrow \{ -1, 0, 1 \}$ by
\[
f_1( \pm 1) := (\pm 1)^{\nu_f} \quad \text{ and } \quad f_1(0) := 0.
\]
Then, for each $x,y \in \{ -1, 0, 1 \}$ one has $f_1(xy) = f_1(x) f_1(y)$.  Furthermore, if
 $\sgn : \mbz \longrightarrow \{ -1, 0, 1 \}$ is defined by
\[
\sgn(n) := 
\begin{cases}
\frac{n}{|n|} & \text{ if } n \neq 0 \\
0 & \text{ if } n = 0,
\end{cases}
\]
then any $n \in \mbz$ decomposes as $n = \sgn(n) \cdot |n|$, and we define 
\[
f_1(n) := f_1(\sgn(n)) \cdot f( |n| ).
\]
It follows that $f_1 : \mbz \longrightarrow \mbq$ is completely multiplicative and (by \eqref{fofnnotzero}) satisfies
\begin{equation} \label{fofnstillnotzero}
n \neq 0
\; \Longrightarrow \;
f_1(n) \neq 0.
\end{equation}
Furthermore,
\[
S_{f_1} \supseteq \{ p \in S_f : (-1)^{k_p} = (-1)^{\nu_f} \},
\]
and by construction the right-hand set is infinite.  We now extend $f_1$ to all of $\mbq$ by setting
\[
\tilde{f}\left( \frac{n}{m} \right) := \frac{f_1(n)}{f_1(m)}.
\]
Since $f_1$ is completely multiplicative (and by \eqref{fofnstillnotzero}), $\tilde{f}$ is well-defined, is completely multiplicative, and satisfies $S_{f_1} \subseteq S_{\tilde{f}}$.  This proves the lemma.
\end{proof}

By the Lemma \ref{completelymultiplicativelemma}, one may as well add ``$f$ is completely multiplicative'' to the hypothesis of Conjecture \ref{generalmainconjecture}.  More generally, recall that $f$ is called \emph{multiplicative} if $f(nm) = f(n)f(m)$ whenever $\gcd(m,n) = 1$.

It follows from a result of P. Erd\H{o}s \cite[Theorem V]{erdos} that
\[
\begin{pmatrix}
\text{$f : \mbn \longrightarrow \mbn$ is multiplicative} \\
\text{and } \forall n \in \mbn, \, f(n+1) \geq f(n)
\end{pmatrix}
\; \Longrightarrow \;
f \text{ is a global power map.}
\]
Replacing the monotonicity hypothesis with the condition
\begin{equation} \label{fwelldefinedmodk}
\forall n \in \mbn, \quad f(n + k) \equiv f(n) \pmod{k},
\end{equation}
M. V. Subbarao \cite{subbarao} has shown that
\[
\begin{pmatrix}
\text{$f : \mbn \longrightarrow \mbz$ is multiplicative} \\
\text{and satisfies \eqref{fwelldefinedmodk} for all $k \in \mbn$ }
\end{pmatrix}
\; \Longrightarrow \;
\begin{pmatrix}
\text{$f$ is a global power map} \\
\text{ or $f(n) = 0 \quad \forall n \in \mbn$.}
\end{pmatrix}
\]

In \cite{subbaraofabrykowski} Subbarao and J. Fabrykowski prove a similar theorem, with the multiplicativity of $f$ relaxed a bit (as we presently describe), and where \eqref{fwelldefinedmodk} is only demanded for \emph{primes} $k$, i.e.
\begin{equation} \label{fwelldefinedmodp}
\forall n \in \mbn, \quad f(n + p) \equiv f(n) \pmod{p}.
\end{equation}
The following is equivalent to \cite[Definition 1.3]{subbaraofabrykowski} 
\begin{Definition} \label{quasimultiplicativedef}
 A function $f : \mbn \longrightarrow \mbn$ is called \emph{quasi-multiplicative} if, for any $n \in \mbn$ and any prime $p$ not dividing $n$, one has
\[
f(pn) = f(p)f(n).
\]
\end{Definition}
For any function $f : \mbn \longrightarrow \mbz$, let us define the set
\begin{equation} \label{defofTf}
T_f := \{ p \text{ prime} ; \; \text{ \eqref{fwelldefinedmodp} holds} \}.
\end{equation}
In \cite{subbaraofabrykowski} it is shown that
\[
\begin{pmatrix}
\text{$f : \mbn \longrightarrow \mbz$ is quasi-multiplicative} \\
\text{and $T_f = \{ \text{all primes} \}$ }
\end{pmatrix}
\; \Longrightarrow \;
\begin{pmatrix}
\text{$f$ is a global power map} \\
\text{ or $f(n) = 0 \quad \forall n \in \mbn$.}
\end{pmatrix}
\]
Furthermore, they make the following conjecture. 
\begin{Conjecture} \label{theirconjecture}
If $f : \mbn \longrightarrow \mbz$ is quasi-multiplicative and $T_f$ is infinite, then either  $f(n) = 0$ for each $n \in \mbn$ or $f$ is a global power map.
\end{Conjecture}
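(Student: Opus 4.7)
The plan is to reduce Conjecture \ref{theirconjecture} to Conjecture \ref{generalmainconjecture}. Let $f : \mbn \to \mbz$ be quasi-multiplicative, not identically zero, with $T_f$ infinite. After reducing to the case that $f$ is nowhere zero on $\mbn$, I would extend $f$ to the unique completely multiplicative function $\tilde{f} : \mbq \to \mbq$ whose value on each rational prime $\ell$ equals $f(\ell)$, with $\tilde{f}(-1) \in \{\pm 1\}$ chosen as in the proof of Lemma \ref{extendingflemma}. By quasi-multiplicativity, $\tilde{f}$ agrees with $f$ on squarefree positive integers, although the two may differ elsewhere.

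The central claim is that $T_f \subseteq S_{\tilde{f}}$ up to finitely many primes. Given $p \in T_f$, I would use Dirichlet's theorem on primes in arithmetic progressions to pick a prime lift $\ell_x \neq p$ of any $x \in \mbf_p^\times$, and define $\tilde{f}_p(x) := f(\ell_x) \bmod p$; independence of the choice of lift is a consequence of $p \in T_f$. For multiplicativity, choose distinct prime lifts $\ell_x \neq \ell_y$ of $x$ and $y$. Then $\ell_x \ell_y$ is a squarefree lift of $xy$, so quasi-multiplicativity gives $f(\ell_x \ell_y) = f(\ell_x) f(\ell_y)$, and applying $p \in T_f$ to any prime lift of $xy$ yields $\tilde{f}_p(xy) = \tilde{f}_p(x) \tilde{f}_p(y)$. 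Thus $p \in S_{\tilde{f}}$, and since $T_f$ is infinite, Conjecture \ref{generalmainconjecture} applied to $\tilde{f}$ produces an integer $k \geq 0$ with $\tilde{f}(\alpha) = \alpha^k$ for every $\alpha \in \mbq$.

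It remains to upgrade $\tilde{f}(\alpha) = \alpha^k$ to $f(n) = n^k$ on all of $\mbn$. For squarefree $n$ this is immediate since $f$ agrees with $\tilde{f}$ there. For general $n$ and any $q \in T_f$ with $q \nmid n$, combining $f(\ell) = \ell^k$ on primes $\ell \neq q$ with the congruence $f(\ell) \equiv \ell^{k_q} \pmod q$ (evaluated at a prime $\ell$ which is a primitive root modulo $q$, supplied by Dirichlet) forces $k_q \equiv k \pmod{q-1}$. Hence $f(n) \equiv n^{k_q} \equiv n^k \pmod q$ for every sufficiently large $q \in T_f$, and letting $q \to \infty$ along $T_f$ yields $f(n) = n^k$ exactly.

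The main obstacle I expect is the zero-reduction: showing that $f \not\equiv 0$ together with $T_f$ infinite forces $f$ to be nowhere zero on $\mbn$. A related bookkeeping issue is certifying that $\tilde{f}_p$ really lands in $\mbf_p^\times$, i.e., that for $p \in T_f$ sufficiently large no residue class consists entirely of primes $\ell$ with $p \mid f(\ell)$. Both should succumb to an iterated application of the $T_f$-congruences together with Dirichlet's theorem, but handling them cleanly — and quantifying how many primes in $T_f$ need to be discarded as ``bad'' — is the delicate step on which the whole reduction rests.
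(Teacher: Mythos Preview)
Your proposal is correct and follows essentially the same route as the paper: both reduce Conjecture~\ref{theirconjecture} to Conjecture~\ref{generalmainconjecture} via the Dirichlet trick (choosing prime lifts of residue classes and exploiting quasi-multiplicativity), and the obstacles you flag --- the zero-reduction and the verification that $\tilde{f}_p$ lands in $\mbf_p^\times$ --- are precisely what the paper's Lemma~\ref{conjectureimpliesconjecturelemma} handles. One organizational difference: the paper first proves $T_f \setminus S_f$ is finite for the \emph{original} $f$ (not just the extension $\tilde{f}$), which via the reasoning of Lemma~\ref{completelymultiplicativelemma} forces $f$ itself to be completely multiplicative on $\mbn$; this makes $\tilde{f}|_\mbn = f$ exactly and renders your final ``upgrade'' step unnecessary.
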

The next lemma, taken together with Lemma \ref{extendingflemma}, shows that Conjecture \ref{theirconjecture} is implied by Conjecture \ref{generalmainconjecture}.  Note that, for any $p \in T_f$, there is a well-defined function
\[
f_p : \mbf_p \longrightarrow \mbf_p, \quad\quad f_p(n \pmod{p}) := f(n).
\]
\begin{lemma} \label{conjectureimpliesconjecturelemma}
Suppose that $f : \mbn \longrightarrow \mbz$ is quasi-multiplicative and that $T_f$ is infinite.  Then either $f(n) = 0$ for each $n \in \mbn$, or $T_f - (S_f \cap T_f)$ is finite (and thus $S_f$ is infinite).
\end{lemma}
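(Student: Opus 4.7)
I would assume $T_f \setminus S_f$ is infinite and deduce $f \equiv 0$; this suffices, since $T_f \setminus (S_f \cap T_f) = T_f \setminus S_f$, and if this set is finite then the hypothesized infinitude of $T_f$ automatically gives $|S_f| = \infty$. As a preliminary, a routine induction on the number of prime factors extends the quasi-multiplicativity of \defnref{quasimultiplicativedef} to full multiplicativity: $f(mn) = f(m)f(n)$ whenever $\gcd(m,n) = 1$.

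The central step is to analyze $f_p \colon \mbf_p \to \mbf_p$ for $p \in T_f$, specifically to show it is multiplicative on $\mbf_p^\times$. Given $\bar a, \bar b \in \mbf_p^\times$, I would invoke Dirichlet's theorem on primes in arithmetic progressions to select two \emph{distinct} primes $q_1 \equiv a \pmod{p}$ and $q_2 \equiv b \pmod{p}$ (this is possible since $p \nmid ab$). Then $\gcd(q_1, q_2) = 1$, so by the preceding paragraph $f(q_1 q_2) = f(q_1) f(q_2)$; reducing modulo $p$ yields $f_p(\bar a \bar b) = f_p(\bar a) f_p(\bar b)$, as required.

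Armed with multiplicativity of $f_p$ on $\mbf_p^\times$, I would establish the following dichotomy for each $p \in T_f$: if $f_p(\bar a_0) = 0$ for some $\bar a_0 \in \mbf_p^\times$, then $f_p(\bar b) = f_p(\bar a_0) f_p(\bar a_0^{-1} \bar b) = 0$ for every $\bar b \in \mbf_p^\times$, so $f_p$ vanishes identically on $\mbf_p^\times$; otherwise $f_p$ restricts to a group homomorphism $\mbf_p^\times \to \mbf_p^\times$, which, as $\mbf_p^\times$ is cyclic, must have the form $x \mapsto x^{k_p}$, placing $p \in S_f \cap T_f$. Consequently every $p \in T_f \setminus S_f$ satisfies $p \mid f(n)$ for all $n \in \mbn_{(p)}$. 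Fixing any $n \in \mbn$, the infinitely many primes $p \in T_f \setminus S_f$ that fail to divide $n$ all divide $f(n)$, forcing $f(n) = 0$; since $n$ was arbitrary, $f \equiv 0$.

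The main obstacle is really the multiplicativity upgrade in the second paragraph: quasi-multiplicativity of $f$ on $\mbn$ only gives the identity for coprime pairs, while multiplicativity of $f_p$ on $\mbf_p^\times$ demands it for \emph{all} pairs in $\mbf_p^\times$, including residue classes whose naive integer lifts share common factors. Dirichlet's theorem cleanly resolves this by furnishing coprime (indeed, prime) lifts of any two classes in $\mbf_p^\times$, and once that bridge is crossed everything else reduces to elementary character theory on the cyclic group $\mbf_p^\times$ together with the observation that a nonzero integer has only finitely many prime divisors.
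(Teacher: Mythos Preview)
Your argument is correct and follows essentially the same route as the paper: both proofs use Dirichlet's theorem to lift residue classes in $\mbf_p^\times$ to (distinct) rational primes, where quasi-multiplicativity applies, thereby showing that $f_p$ is multiplicative on $\mbf_p^\times$ and deducing the dichotomy ``$f_p \equiv 0$ on $\mbf_p^\times$'' versus ``$p \in S_f$.'' Your presentation is in fact slightly cleaner, since you establish multiplicativity of $f_p$ on $\mbf_p^\times$ once and for all and then read off both halves of the dichotomy, whereas the paper first treats the vanishing case via a generator $\hat g$ and only afterwards remarks that the same Dirichlet trick gives the homomorphism property.
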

\begin{proof}
Fix any prime $p \in T_f$ and note that $p \in S_f \cap T_f$ if and only if
\begin{equation} \label{fppreservesmultgroup}
f_p(\mbf_p^\times) \subseteq \mbf_p^\times
\end{equation}
holds and $f_p$ is a multiplicative homomorphism.  Let
\[
\begin{split}
\mbz & \rightarrow \mbf_p \\
n & \mapsto \hat{n}
\end{split}
\]
denote the reduction modulo $p$ map and choose $g \in \mbn$ so that
$
\langle \hat{g} \rangle = \mbf_p^\times.
$
Suppose that \eqref{fppreservesmultgroup} does not hold, i.e. that $f_p(\hat{g}^n) = \hat{0}$ for some positive integer $n$.  By Dirichlet's theorem on primes in arithmetic progressions, one may find $n$ prime numbers $q_1, q_2, \dots, q_n$ for which
\[
\forall i \in \{1, 2, \dots, n \}, \quad q_i \equiv g \pmod{p}.
\]
It follows from Definition \ref{quasimultiplicativedef} that 
\begin{equation} \label{dirichlettrick}
\hat{0} = f_p \left( \hat{g}^n \right) = f_p \left( \prod_{i=1}^n \hat{q}_i \right) = \prod_{i=1}^n f_p( \hat{q}_i ) = \left( f_p(\hat{g}) \right)^n,
\end{equation}
and so we conclude that, for any prime $p \in T_f$,
\[
\text{condition \eqref{fppreservesmultgroup} fails }
\; \Longrightarrow \;
f_p (\mbf_p) = \{ \hat{0} \}.  
\]
Furthermore, if we set
\[
T_0 := \{ p \in T_f : \; f_p( \mbf_p ) = \{ \hat{0} \} \},
\]
then for each $n \in \mbn$, $f(n)$ is divisible by every prime $p \in T_0$.  Thus, 
\[
|T_0| = \infty
\; \Longrightarrow \;
\forall n \in \mbn, \; 
f(n) = 0.
\]
Assuming $f$ is not identically zero, we have that $T_0$ is finite, and putting $S := T_f - T_0$, we see that \eqref{fppreservesmultgroup} holds for each $p \in S$.  Furthermore, using Dirichlet's theorem on primes in arithmetic progressions and reasoning as in \eqref{dirichlettrick}, one sees that the restriction of $f_p$ to $\mbf_p^\times$ is a multiplicative homomorphism for each $p \in S$.  In particular, $S = T_f \cap S_f$, which concludes the proof.
\end{proof}
\begin{remark}
Lemmas \ref{conjectureimpliesconjecturelemma} and \ref{completelymultiplicativelemma} together imply that any quasi-multiplicative function satisfying \eqref{fwelldefinedmodp} for infinitely many primes $p$ is necessarily completely multiplicative, solving Problem 3.9 of \cite{subbaraofabrykowski}.
\end{remark}

The main result of \cite{feherphong} implies that, if the set $\{ \text{all primes} \} - T_f$ is finite, then either $f$ is identically zero or $f$ is a global power map.  A somewhat stronger result may be found in \cite[Proposition 1, p. 329]{khareprasad} (whose proof appeals to \cite[Theorem 1]{corralesschoof}), which implies that if $T_f$ has density one in the set of primes, then either $f$ is identically zero or $f$ is a global power map.
Putting Lemmas \ref{conjectureimpliesconjecturelemma} and \ref{extendingflemma} together with Theorem \ref{maintheorem}, we obtain the following corollary, which represents further progress towards Conjecture \ref{theirconjecture}.  
\begin{cor}
Let $f : \mbn \longrightarrow \mbz$ be a quasi-multiplicative function and let $T_f$ be defined by \eqref{defofTf}.  Then either $f$ is identically zero, or $f$ is a global power map, or there exist real constants $b_f, c_f > 0$ so that, for $x \geq c_f$, the bound
\[
| T_f(x) | \; \ll \; \frac{\log \log \log \log x}{\log \log \log x} \cdot \pi(x) + b_f
\]
holds, with an absolute implied constant.  In particular, if $f : \mbn \longrightarrow \mbz$ is a quasi-multiplicative function for which $\ol{\gd}(T_f) > 0$, then either $f$ is identically zero or $f$ is a global power map.
\end{cor}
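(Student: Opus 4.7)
The strategy is to chain the three preceding results. Use Lemma \ref{conjectureimpliesconjecturelemma} to replace the count $|T_f(x)|$ by $|S_f(x)|$ up to a bounded error, use Lemma \ref{extendingflemma} (applied twice) to cover $S_f$ by the $S$-sets attached to two extensions $\tilde{f}_i : \mbq \to \mbq$ of $f$, and finally invoke Theorem \ref{maintheorem} to bound each $|S_{\tilde{f}_i}(x)|$. I anticipate no real obstacle; this is essentially a bookkeeping exercise, with the only minor subtlety being the parity issue that forces two extensions rather than one.

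If $T_f$ is finite, take $b_f := |T_f|$ and there is nothing to prove. Assume henceforth that $T_f$ is infinite and that $f$ is neither identically zero nor a global power map. Lemma \ref{conjectureimpliesconjecturelemma} then produces a finite set $E := T_f \setminus (S_f \cap T_f)$ with
\[
|T_f(x)| \leq |S_f(x)| + |E|,
\]
and in particular $S_f$ is infinite, so that Lemma \ref{extendingflemma} becomes applicable.

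To bound $|S_f(x)|$, I would reuse the construction from the proof of Lemma \ref{extendingflemma}, but carry it out twice: form completely multiplicative extensions $\tilde{f}_0, \tilde{f}_1 : \mbq \to \mbq$ of $f$ corresponding to the two possible choices $\tilde{f}_i(-1) := (-1)^i$ for $i \in \{0, 1\}$. The computation in that proof shows $S_{\tilde{f}_i} \supseteq \{p \in S_f : k_p \equiv i \pmod{2}\}$ for each $i$: for odd $p$, the congruence $\tilde{f}_i(-n) = (-1)^i f(n) \equiv (-1)^i n^{k_p} \pmod{p}$ agrees with $(-n)^{k_p} = (-1)^{k_p} n^{k_p}$ precisely when $i \equiv k_p \pmod{2}$. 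Since these two subsets cover $S_f$,
\[
|S_f(x)| \leq |S_{\tilde{f}_0}(x)| + |S_{\tilde{f}_1}(x)|.
\]

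Since $f$ is not a global power map, neither $\tilde{f}_0$ nor $\tilde{f}_1$ can be one, because the restriction to $\mbn$ of a global power map on $\mbq$ is a global power map on $\mbn$. Applying Theorem \ref{maintheorem} to each of $\tilde{f}_0, \tilde{f}_1$ and summing yields a bound of the desired shape on $|S_f(x)|$, hence on $|T_f(x)|$, with the implied constant merely doubled (still absolute) and the finite errors absorbed into the new $b_f$. The final sentence of the corollary is then immediate, since $\frac{\log \log \log \log x}{\log \log \log x} \to 0$ as $x \to \infty$ forces any set satisfying the main bound to have zero upper density.
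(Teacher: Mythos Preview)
Your proposal is correct and follows the approach the paper indicates (combining Lemmas \ref{conjectureimpliesconjecturelemma} and \ref{extendingflemma} with Theorem \ref{maintheorem}), while filling in a detail the paper leaves implicit: a single extension $\tilde f$ from Lemma \ref{extendingflemma} only captures the primes $p\in S_f$ with $k_p$ of a fixed parity, so covering all of $S_f$ indeed requires the pair $\tilde f_0,\tilde f_1$ as you do. Your verification that neither $\tilde f_i$ can be a global power map and that the two subsets partition $S_f$ (up to the prime $2$) is accurate, and the remaining bookkeeping is straightforward.
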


Returning to our survey of related results, one may also replace the assumption of (quasi-)multiplicativity of $f$ by upper bounds on its growth.  In this spirit, I. Ruzsa \cite{ruzsa} proved that, if $f : \mbn \longrightarrow \mbz$ satisfies \eqref{fwelldefinedmodk} for each $k \in \mbn$ together with the bound
\[
|f(n)| \ll (e - 1)^{\ga n}
\]
for some $\ga < 1$, then $f$ is a polynomial map.  Ruzsa also conjectured that the same result should hold with $e-1$ replaced by $e$, and some progress on this conjecture has been made by Zannier \cite{zannier}.

Viewed more broadly, Conjecture \ref{generalmainconjecture} asserts that, if a function $f$ has some special form when reduced modulo $p$ for infinitely many primes $p$,  then $f$ itself must have a special form.  We remark that, in other contexts, one may find results of this type; see for instance \cite[Theorem 4, pp. 329--330]{khareprasad}.

\section{Heuristics}

We will now provide a probabilistic argument to support Conjecture \ref{generalmainconjecture}.  We begin with some preliminary observations.

\begin{lemma} \label{noteventuallyconstantpowerlemma}
Suppose that $f : \mbq \longrightarrow \mbq$ is a function for which $|S_f| = \infty$.  If there is an exponent $k \in \mbz$ and a constant $C$ for which $f(q) \in \{ q^k, - q^k \}$ for all primes $q \geq C$, then $f(\ga) = \ga^k$ for all $\ga \in \mbq$.
\end{lemma}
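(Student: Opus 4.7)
The plan is to combine the complete multiplicativity of $f|_{\mbq^\times}$ (supplied by \lemref{completelymultiplicativelemma}) with the rigid hypothesis $f(q) \in \{q^k, -q^k\}$ for $q \geq C$ prime, in order to pin $k_p \pmod{p-1}$ down to only two possibilities for each $p \in S_f$; then one of the two possibilities can be ruled out using the distinctness of the Legendre characters modulo distinct primes.

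First I would set $\ep_q := f(q) q^{-k} \in \{\pm 1\}$ for primes $q \geq C$. For each $p \in S_f$ with $p > 2$ and each prime $q \geq C$, $q \neq p$, the local relation gives $\ep_q \equiv q^{k_p - k} \pmod p$; squaring and using $\ep_q^2 = 1$ yields $q^{2(k_p - k)} \equiv 1 \pmod p$. By Dirichlet's theorem on primes in arithmetic progressions, every nonzero residue of $\mbf_p^\times$ is represented by some prime $q \geq C$ distinct from $p$, so the primes $q \geq C$ with $q \neq p$ generate $\mbf_p^\times$. Hence $(p - 1) \mid 2(k_p - k)$, which gives the dichotomy
\[
k_p \equiv k \pmod{p-1} \quad \text{ or } \quad k_p \equiv k + \tfrac{p-1}{2} \pmod{p-1}
\]
for every $p \in S_f$ with $p > 2$. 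Call the corresponding disjoint subsets of $S_f$ by $A$ and $B$.

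The key step is to show that $B$ is finite. If $|B| = \infty$, then for each $p \in B$ and each $\ga \in \mbz_{(p)}^\times$ Euler's criterion yields $f(\ga) \ga^{-k} \equiv \ga^{(p-1)/2} \equiv \left( \frac{\ga}{p} \right) \pmod p$. Fixing $\ga \in \mbq^\times$ and letting $p$ range over $p \in B$ with $p \nmid \num(\ga) \den(\ga)$, the rational number $f(\ga) \ga^{-k}$ is congruent to $\pm 1$ modulo infinitely many primes; applying \eqref{equiv0modinfinitelymanyp} to $(f(\ga) \ga^{-k})^2 - 1$ shows that it equals $\pm 1$. This defines a completely multiplicative character $\eta : \mbq^\times \to \{\pm 1\}$ coinciding, for every $p \in B$, with the Legendre character $\left( \frac{\cdot}{p} \right)$ on $\mbz_{(p)}^\times$. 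Picking two distinct primes $p_1, p_2 \in B$ and invoking Dirichlet's theorem in a suitable residue class modulo $4 p_1 p_2$ then produces a prime $q \geq C$, $q \notin \{p_1, p_2\}$, with $\left( \frac{q}{p_1} \right) \neq \left( \frac{q}{p_2} \right)$, contradicting the forced equality $\eta(q) = \left( \frac{q}{p_1} \right) = \left( \frac{q}{p_2} \right)$.

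Hence $|A| = \infty$. For any $\ga \in \mbq^\times$, choosing $p \in A$ with $p \nmid \num(\ga) \den(\ga)$ gives $f(\ga) \equiv \ga^{k_p} \equiv \ga^k \pmod p$; since this holds for infinitely many $p$, \eqref{equiv0modinfinitelymanyp} applied to $f(\ga) - \ga^k$ forces $f(\ga) = \ga^k$, as desired. The most delicate point is the distinctness argument for the Legendre characters in the $B$ case, but this reduces to a standard application of quadratic reciprocity together with Dirichlet's theorem on arithmetic progressions.
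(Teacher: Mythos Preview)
Your argument is correct and uses the same core ingredients as the paper's proof: complete multiplicativity from \lemref{completelymultiplicativelemma}, the observation that $q^{2(k_p-k)}\equiv 1\pmod p$ forces $k_p\in\{k,\,k+(p-1)/2\}$, and the incompatibility of the Legendre characters $\left(\tfrac{\cdot}{p_1}\right)$ and $\left(\tfrac{\cdot}{p_2}\right)$ for distinct odd primes, established via Dirichlet. The organizational difference is that the paper introduces $\mu(\ga)=f(\ga)\ga^{-k}$ and splits into two global cases according to whether $\mu$ is eventually $1$ on primes, whereas you partition $S_f$ itself into $A$ and $B$ according to which residue $k_p$ falls into; your decomposition is slightly more direct and, in particular, handles negative $\ga$ uniformly in the last step, whereas the paper treats $f(-1)$ and the sign separately at the end via the observation $S_{\sgn}=\{2\}$. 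Both routes are equally elementary and rest on the same Legendre-symbol contradiction.
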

\begin{proof}
By Lemma \ref{completelymultiplicativelemma}, $f(\mbq^\times) \subseteq \mbq^\times$ and $f$ is completely multiplicative, upon which it follows that $f(-1) \in \{ 1, -1 \}$.  Thus,
\begin{equation} \label{decompoff}
\begin{split}
\forall \ga \in \mbq^\times, \quad f(\ga) 
&= f(\sgn(\ga)) \cdot f(| \ga |) \\
&= \pm f(| \ga |),
\end{split}
\end{equation}
where $\sgn (\ga) := \ga / | \ga |$ denotes the sign of $\ga$.  One sees that $f(\ga)$ is determined by $f(\sgn(\ga))$ and its restriction
\[
f : \mbq_+^\times \longrightarrow \mbq^\times
\]
to $\mbq_+^\times$.
Assuming that $f(q) = \pm q^k$ for some $k \in \mbz$ and all primes $q \geq C$, then define
\[
\mu : \mbq_+^\times \longrightarrow \mbq^\times, \quad \mu(\ga) := \frac{f(\ga)}{\ga^k}.
\]
One checks that 
\begin{equation} \label{SfinSmu}
S_f \subseteq S_\mu, 
\end{equation}
and also that 
\[
\mu ( \{ q \text{ prime}; \; q \geq C \}) \subseteq \{ \pm 1 \}.  
\]
Either there exists a constant $C_1$ for which $\mu ( \{ q \text{ prime}; \; q \geq C_1 \}) = \{ 1 \}$, or for each constant $C_1$ one has $\mu ( \{ q \text{ prime}; \; q \geq C_1 \}) = \{ 1, -1 \}$ (since $f_p$ is a homomorphism, one cannot have $\mu ( \{ q \text{ prime}; \; q \geq C_1 \}) = \{ -1 \}$).
In the first case, by taking a large prime $q$ which is a primitive root modulo $p$, one finds that $k_p = k$ for each $p \in S_f$.  Thus for any $\ga \in \mbq_+^\times$, $f(\ga) - \ga^k$ is divisible by infinitely many primes $p \in S_f$, and so $f(\ga) = \ga^k$.

If on the other hand $\mu ( \{ q \text{ prime}; \; q \geq C_1 \}) = \{ 1, -1 \}$ for any constant $C_1$, then for each $p \in S_\mu$ and each prime $q$ which is large enough,
\[
\mu(q) = \left( \frac{q}{p} \right),
\]
the Legendre symbol at $p$.  If there are distinct primes $p_1, p_2 \in S_\mu$, then by Dirichlet's theorem on primes in arithmetic progressions, one may find a prime $q$ with
\[
\left( \frac{q}{p_1} \right) \neq \left( \frac{q}{p_2} \right),
\]
a contradiction.  Thus, in this case $|S_\mu| \leq 1$, contradicting \eqref{SfinSmu}.  Thus, we see that
\[
\forall \ga \in \mbq_+^\times, \quad f(\ga) = \ga^k.
\]
It follows from \eqref{decompoff} that
\[
\forall \ga \in \mbq^\times, \quad f(\ga) = 
\begin{cases}
	\ga^k &\text{ or } \\
	\sgn(\ga) \ga^k, & 
\end{cases}
\]
according to whether or not $f(-1) = (-1)^k$.  If $f(\ga) = \sgn(\ga) \ga^k$, then
\[
\sgn(\ga) = \frac{f(\ga)}{\ga^k},
\]
and as before we conclude that $S_f \subseteq S_{\sgn}$, which contradicts the fact that $S_{\sgn} = \{ 2 \}$.  Therefore $f(\ga) = \ga^k$ for every $\ga \in \mbq^\times$, finishing the proof of the lemma.
\end{proof}

\begin{cor} \label{threenumberslemma}
Suppose that $f : \mbq \longrightarrow \mbq$ satisfies $| S_f | = \infty$.  Then either $f$ is a global power map, or for each $L \in \mbn$, one may find a set
\[
\mathfrak{N}_L = \{ n_1, n_2, \dots, n_L \} \subseteq \mbn
\]
of $L$ positive square-free integers satisfying
\[
\forall n \in \mathfrak{N}_L \quad f(n) \notin n^\mbz \cup - n^\mbz.
\]
\end{cor}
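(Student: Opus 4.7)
The plan is to argue by contraposition. Suppose that the second alternative in the conclusion fails, so that for some $L \in \mbn$ the set
\[
\mathcal{S} := \{ n \in \mbn : n \text{ is square-free and } f(n) \notin n^\mbz \cup -n^\mbz \}
\]
has fewer than $L$ elements, and hence is finite. I will deduce that $f$ is forced to be a global power map, which contradicts the assumption of the corollary.

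By \lemref{completelymultiplicativelemma}, $f(\mbq^\times) \subseteq \mbq^\times$ and $f|_{\mbq^\times}$ is completely multiplicative; in particular $f(q) \neq 0$ for every prime $q$. Since each prime is square-free, only finitely many primes lie in $\mathcal{S}$, and for all other primes $q$ one may write
\[
f(q) = \ve_q \, q^{k_q}, \qquad \ve_q \in \{\pm 1\}, \quad k_q \in \mbz.
\]
The main step is to show that the exponents $k_q$ are eventually constant in $q$. Because $\mathcal{S}$ is finite, for any two sufficiently large distinct primes $p, q$ one has $p, q, pq \notin \mathcal{S}$, and then
\[
\ve_{pq}\,(pq)^{k_{pq}} \;=\; f(pq) \;=\; f(p) f(q) \;=\; \ve_p \ve_q \cdot p^{k_p} q^{k_q}.
\]
Comparing $p$-adic and $q$-adic valuations forces $k_p = k_q = k_{pq}$. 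Fixing one sufficiently large pivot prime $p_0$ and varying $q$, one obtains a single integer $k$ with $k_q = k$, and consequently $f(q) \in \{q^k, -q^k\}$, for every sufficiently large prime $q$.

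At this point \lemref{noteventuallyconstantpowerlemma} applies directly and produces $f(\ga) = \ga^k$ for all $\ga \in \mbq$, contradicting the hypothesis that $f$ is not a global power map. I do not anticipate any substantive obstacle: once the finiteness of $\mathcal{S}$ is in hand, complete multiplicativity from \lemref{completelymultiplicativelemma} together with \lemref{noteventuallyconstantpowerlemma} does all the real work. The only bookkeeping is to check that, in picking a pivot prime $p_0$ and varying $q$, the finitely many primes and finitely many prime-products excluded by $\mathcal{S}$ can always be avoided; this is immediate from the infinitude of the primes.
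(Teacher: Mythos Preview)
Your argument is correct. You and the paper rely on the same three ingredients: complete multiplicativity from Lemma~\ref{completelymultiplicativelemma}, Lemma~\ref{noteventuallyconstantpowerlemma}, and the observation that for distinct primes $p,q$ with $f(p)=\pm p^{k_p}$ and $f(q)=\pm q^{k_q}$ one has $f(pq)\in (pq)^{\mbz}\cup -(pq)^{\mbz}$ if and only if $k_p=k_q$. The only difference is organizational: the paper builds $\mathfrak{N}_L$ directly by induction on $L$, at each step either locating a large prime with $f(q)\notin q^{\mbz}\cup -q^{\mbz}$ or (failing that) invoking Lemma~\ref{noteventuallyconstantpowerlemma} to find two large primes with unequal exponents and adjoining their product; you instead run the contrapositive, assuming the set $\mathcal{S}$ is finite and using the product observation in the reverse direction to force the $k_q$ to stabilize. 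Your packaging avoids the inductive bookkeeping and is a touch cleaner, but the mathematical content is the same. One cosmetic point: the phrase ``contradicts the assumption of the corollary'' is slightly off, since the corollary does not assume $f$ is not a global power map; what you are really doing is proving the disjunction by showing that the failure of the second alternative forces the first.
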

\begin{proof}
Suppose that $| S_f | = \infty$ but $f$ is not a global power map.  We proceed by induction on $L$.   For the base case $L=1$, either there exists a prime $q$ for which $f(q) \notin q^\mbz \cup - q^\mbz$ (in which case we set $n = q$), or else for each prime $q$, $f(q) \in \{ q^{k_q}, -q^{k_q} \}$ for some exponent $k_q \in \mbz$.  In the latter case, provided $f$ is not a global power map, then by Lemma \ref{noteventuallyconstantpowerlemma} one may find two primes $q_1$ and $q_2$ for which $k_{q_1} \neq k_{q_2}$.  By Lemma \ref{completelymultiplicativelemma}, $f$ must be completely multiplicative, and thus $f(q_1 q_2) \notin (q_1 q_2)^\mbz \cup  - (q_1 q_2)^\mbz$, so in this case we may set $n = q_1q_2$.   For the induction step, we reason the same way:  having constructed $\mathfrak{N}_{L-1}$, either there exists a prime $q$ larger than any $n \in \mathfrak{N}_{L-1}$ for which $f(q) \notin q^\mbz \cup - q^\mbz$ (in which case we set $\mathfrak{N}_L := \mathfrak{N}_{L-1} \cup \{ q \}$) or else for each prime $q$ larger than any $n \in \mathfrak{N}_{L-1}$, $f(q)  \in \{ q^{k_q}, - q^{k_q} \}$.  In the second case, by Lemma \ref{noteventuallyconstantpowerlemma} we may find two primes $q_1$ and $q_2$, each larger than any $n \in \mathfrak{N}_{L-1}$ and for which $k_{q_1} \neq k_{q_2}$, and we put $\mathfrak{N}_L := \mathfrak{N}_{L-1} \cup \{ q_1 q_2 \}$.
\end{proof}

Now suppose that $f : \mbq \longrightarrow \mbq$ is not a global power map, but nevertheless $S_f$ is infinite.  We presently apply probabilistic reasoning to deduce a (heuristic) contradiction.  Applying Corollary \ref{threenumberslemma} with $L = 3$, we may find three natural numbers $n_1, n_2, n_3 \in \mbn$ such that for each $i \in \{ 1, 2, 3 \}$,  $f(n_i) \notin n_i^\mbz \cup - n_i^\mbz$.  Consider the rational vectors
\[
{\bf{n}} := (n_1, n_2, n_3) \in (\mbq^\times)^3 \quad \text{and} \quad f({\bf{n}}) := (f(n_1),f(n_2),f(n_3)) \in (\mbq^\times)^3
\]
and define the sets $\Omega_{{\bf{n}}} \subseteq (\mbq^\times)^3$, $\Omega_{{\bf{n}}}(p) \subseteq \mbf_p^3$ by
\[
\begin{split}
\Omega_{\bf{n}} :=& \{ ( \ve_1 n_1^k, \ve_2 n_2^k, \ve_3 n_3^k) : \; k \in \mbz, \; \ve_i \in \{ \pm 1 \} \} \subseteq (\mbq^\times)^3 \\
\Omega_{{\bf{n}}}(p) :=& \{ (\hat{n}_1^k, \hat{n}_2^k, \hat{n}_3^k) : \; k \in \mbz/(p-1)\mbz \} \subseteq \mbf_p^3.
\end{split}
\]
By construction, we have that
\begin{equation} \label{globallynotin}
f({\bf{n}}) \notin \Omega_{\bf{n}}.
\end{equation}
For an arbitrary prime $p$ for which ${\bf{n}}, f({\bf{n}}) \in (\mbz_{(p)}^\times)^3$, we consider the reduction $f({\bf{n}}) \pmod{p} \in (\mbf_p^\times)^3$.  We evidently have
\begin{equation} \label{pinomega}
p \in S_f \; \Longrightarrow \; f({\bf{n}}) \pmod{p} \in \Omega_{\bf{n}}(p) \cap (\mbf_p^\times)^3.
\end{equation}
Thus for any prime p, we are motivated to ask how likely it is that
\begin{equation} \label{locallyin}
f({\bf{n}}) \pmod{p} \in \Omega_{\bf{n}}(p) \cap (\mbf_p^\times)^3.
\end{equation}
By virtue of \eqref{globallynotin}, it is reasonable to expect $f({\bf{n}})$ to behave like a random vector\footnote{Note that, if $f({\bf{n}}) \in \Omega_{{\bf{n}}}$, then $f({\bf{n}}) \pmod{p} \in \Omega_{{\bf{n}}}(p)$ for infinitely many primes $p$.  Indeed, for any $p$ satisfying $\left( \frac{n_i}{p} \right) = \ve_i$ for $i \in \{ 1, 2, 3\}$, one has $f({\bf{n}}) \pmod{p} \in \Omega_{{\bf{n}}}(p)$.} in $(\mbf_p^\times)^3$, at least with respect to lying in $\Omega_{\bf{n}}(p) \cap (\mbf_p^\times)^3$.  The heuristic probability that \eqref{locallyin} occurs is thus
\[
\text{Prob}\left( f({\bf{n}}) \pmod{p} \in \Omega_{\bf{n}}(p) \cap (\mbf_p^\times)^3 \right) \approx \frac{| \Omega_{\bf{n}}(p) \cap (\mbf_p^\times)^3 |}{| (\mbf_p^\times)^3 |} \leq \frac{(p-1)}{(p-1)^3}.
\]
Thus, by \eqref{pinomega}, the ``event'' $p \in S_f$ should occur with probability no greater than $\ds \frac{1}{(p-1)^2}$, and so it is expected that
\[
| \{ p \in S_f ; \; p \leq X \} | \leq \sum_{p \leq X} \frac{1}{(p-1)^2}.
\]
Since the right hand side is uniformly bounded in $X$, we expect $S_f$ to be finite, contradicting our assumption that $S_f$ is infinite.  This leads us to Conjecture \ref{generalmainconjecture}.

\section{Proof of main theorem}

The rest of the paper is devoted to a proof of Theorem \ref{maintheorem}.  We begin by observing that, for any parameters $0 \leq Y < Z$, one may bound the quantity $|S_f(x)|$ by two sums:
\begin{equation} \label{initialdecomposition}
| S_f(x) | \leq \sum_{{\begin{substack} {p \leq x \\ \forall \ell \in [Y, Z), \\  p \nequiv 1 \pmod{\ell}} \end{substack}}} 1 \; + \; \sum_{Y \leq \ell < Z} \sum_{{\begin{substack} {p \in S_f(x) \\ p \equiv 1 \pmod{\ell}} \end{substack}}} 1.
\end{equation}
We will eventually choose $Y = Y(x)$ and $Z = Z(x)$ appropriately so as to bound each of these sums.  

The main ingredient in our proof is an effective version of the Chebotarev density theorem, which will be discussed in general in Section \ref{effectivechebotarevdensitysection}.  It will be applied in the context of cyclotomic extensions to handle the first sum, and in the context of Kummer extensions to handle the second sum.  
The former ``cyclotomic part'' forms the content of Sections \ref{cyclotomicextensionssection} and \ref{proofoffirstpropositionsection}, while the latter ``Kummer extension'' part comprises Sections \ref{kummerextensionssection} and \ref{proofofsecondpropositionsection}.

\subsection{Effective Chebotarev density} \label{effectivechebotarevdensitysection}

An effective version of the Chebotarev density theorem was first proved by Lagarias and Odlyzko \cite{lagariasodlyzko} and further refined by Serre \cite{serre}.  We will now describe the theorem precisely in the form we will use it.

The Chebotarev density theorem gives an asymptotic formula for the number of primes $p \leq x$ for which the associated Frobenius automorphism has a prescribed action on a given fixed number field.  More precisely, let $K$ be a number field which is Galois over $\mbq$ with Galois group $G := \gal(K/\mbq)$ and discriminant $d_K$.  Furthermore, fix any subset $\mc{C} \subseteq G$ satisfying 
\begin{equation} \label{closedunderconjugation}
\forall \gs \in G, \quad \gs \mc{C} \gs^{-1} = \mc{C}.
\end{equation}
For any rational prime $p$ which doesn't divide $d_K$, let $\frob_p \subseteq G$ denote the conjugacy class in $G$ of the Frobenius automorphism $\frob_{\mf{P}}$ attached to any prime ideal $\mf{P} \subseteq \mc{O}_K$ lying over $p\mbz$.  By \eqref{closedunderconjugation}, either $\frob_p \subseteq \mc{C}$ or $\frob_p \cap \mc{C} = \emptyset$, and we consider the counting function
\begin{equation*} 
\pi(x; K/\mbq, \mc{C}) := | \{ p \leq x ; \; p \nmid d_K \, \text{ and } \, \frob_p \subseteq \mc{C} \} |.
\end{equation*}
The Chebotarev density theorem asserts that, as $x \longrightarrow \infty$, one has
\[
\pi(x; K/\mbq, \mc{C}) \sim \frac{| \mc{C} | }{| G |} \pi(x).
\]
We will require the following effective version, which bounds the error term in this asymptotic in terms of data attached to the number field $K$.
\begin{Theorem} (Effective Chebotarev Theorem) \label{effectivechebotarevthm}
There exist absolute positive constants $c_1$ and $c_2$ (with $c_1$ effective) such that, if $x \geq 2$ and 
\begin{equation} \label{xbigenough}
\sqrt{\frac{\log x}{[K : \mbq ]}} \geq c_2 \max \left\{ \log | d_K |, |d_K|^{1/[K : \mbq ]} \right\},
\end{equation}
then
\[
\pi(x; K/\mbq, \mc{C}) =  \frac{| \mc{C} | }{| G |} \pi(x) + O \left( | \mc{C} | \cdot x \cdot \exp\left( -c_1 \sqrt{\frac{\log x}{[K : \mbq ]}} \right) \right).
\]
\end{Theorem}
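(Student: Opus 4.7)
The plan is to follow the approach of Lagarias--Odlyzko, reducing the prime counting problem to the analytic behavior of Artin $L$-functions attached to $K/\mbq$. One begins by expanding the indicator function $1_{\mc{C}}$, which is a class function on $G$ by \eqref{closedunderconjugation}, in the basis of irreducible characters:
\[
1_{\mc{C}}(g) = \frac{|\mc{C}|}{|G|} \sum_{\chi \in \widehat{G}} \overline{\chi(\mc{C})}\, \chi(g),
\]
where $\chi(\mc{C})$ denotes the common value of $\chi$ on $\mc{C}$. Inserting this into $\pi(x; K/\mbq, \mc{C})$, applying Abel summation, and isolating the contribution of the trivial character yields the main term $(|\mc{C}|/|G|)\pi(x)$; it remains to estimate, for each non-trivial irreducible character $\chi$, the twisted prime sum
\[
\psi(x,\chi) := \sum_{N\mathfrak{p}^m \leq x} \chi(\frob_{\mathfrak{p}})^m \log N\mathfrak{p}.
\]

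The next step is to invoke Brauer's induction theorem to write each Artin $L$-function $L(s,\chi,K/\mbq)$ as a product and quotient of Hecke $L$-functions attached to intermediate abelian extensions. Because Hecke $L$-functions admit meromorphic continuation and functional equations with analytic conductors controlled polynomially by $|d_K|$ and $[K:\mbq]$, one may apply a Riemann--von Mangoldt type explicit formula to express $\psi(x,\chi)$ as a sum over the non-trivial zeros of the associated Hecke $L$-functions plus lower-order archimedean terms. Using a standard zero-free region of the form $\sigma > 1 - c/\log(|d_K|(|t|+2))$ for the Dedekind zeta function $\zeta_K(s)$, which dominates the zeros of all constituent $L$-functions by factorization, and truncating the sum over zeros at height $T \asymp \exp\bigl(\sqrt{\log x /[K:\mbq]}\bigr)$, produces for each non-trivial $\chi$ an error of the shape $x \exp\bigl(-c_1\sqrt{\log x /[K:\mbq]}\bigr)$.

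The main obstacle is the possible Siegel zero of $\zeta_K(s)$: an exceptional real zero $\beta < 1$ that can lie arbitrarily close to $1$ and spoil the zero-free region. Here one appeals to Stark's effective lower bound, which guarantees $1 - \beta \gg |d_K|^{-1/[K:\mbq]}$ for any such zero. This is precisely the origin of the hypothesis $\sqrt{\log x /[K:\mbq]} \geq c_2 |d_K|^{1/[K:\mbq]}$ in \eqref{xbigenough}: once $x$ exceeds this threshold, the worst-case contribution $x^{\beta}/\beta$ of a Siegel zero is itself dominated by $x \exp\bigl(-c_1\sqrt{\log x /[K:\mbq]}\bigr)$. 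Reassembling the character expansion and applying orthogonality to $\sum_\chi \overline{\chi(\mc{C})}\,\psi(x,\chi)$ then produces the claimed error term, with the factor of $|\mc{C}|$ emerging naturally from the coefficient $|\mc{C}|/|G|$ in the character expansion combined with the contributions from non-trivial characters. The technically delicate point throughout is to keep $c_1$ absolute, which forces one to track the dependence of every intermediate constant on $[K:\mbq]$ and $|d_K|$; this uniform bookkeeping is the substance of Serre's refinement of the original Lagarias--Odlyzko estimate.
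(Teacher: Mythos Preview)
The paper does not prove this theorem at all: it is quoted from the literature (Lagarias--Odlyzko \cite{lagariasodlyzko}, with refinements due to Serre \cite{serre}) and used as a black box. So there is no ``paper's own proof'' to compare against; your sketch is an attempt to reconstruct the cited argument rather than to reproduce anything in the present paper.

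As a sketch of the Lagarias--Odlyzko argument your outline is broadly in the right spirit, but a couple of points deserve comment. First, the character expansion you write,
\[
1_{\mc{C}}(g) = \frac{|\mc{C}|}{|G|} \sum_{\chi \in \widehat{G}} \overline{\chi(\mc{C})}\, \chi(g),
\]
is only correct when $\mc{C}$ is a \emph{single} conjugacy class; in the paper $\mc{C}$ is merely assumed stable under conjugation and may be a union of classes, so ``the common value $\chi(\mc{C})$'' is not defined. One instead writes $1_{\mc{C}} = \sum_i 1_{C_i}$ over the constituent classes $C_i$ and expands each separately (or works directly with the class function $1_{\mc{C}}$ and its Fourier coefficients $\langle 1_{\mc{C}}, \chi\rangle$). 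Second, the original Lagarias--Odlyzko paper does not proceed via Brauer induction and Artin $L$-functions as you describe; it works more directly with $\zeta_K$ and with Hecke $L$-functions over intermediate fields, using Deuring's reduction. The Brauer-induction route you sketch is a legitimate alternative strategy, but it is not a faithful summary of \cite{lagariasodlyzko}. Your identification of the role of Stark's bound in handling a possible exceptional zero, and the resulting appearance of $|d_K|^{1/[K:\mbq]}$ in \eqref{xbigenough}, is correct.
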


\subsection{Bounding each sum in \eqref{initialdecomposition}}

We will now state two propositions which bound respectively the first and second sums occurring on the right-hand side of \eqref{initialdecomposition}.  

First observe that, by the prime number theorem, one has
\[
\sum_{\ell \leq Z} \log \ell \sim Z \quad\quad (Z \longrightarrow \infty),
\]
and consequently there exists a positive real constant $M$ for which
\begin{equation} \label{boundforn}
\forall Z \geq 2, \quad\quad \prod_{\ell \leq Z} \ell \leq e^{MZ}.
\end{equation}
In fact, one can take $M = \log 4$ (see \cite[Theorem 4, p. 11]{tenenbaum}).

\begin{proposition} \label{firstsumboundprop}
Assume that
\[
2 \leq Y \leq Z \leq  \frac{1}{3M + 1} \cdot \log \log x,
\]
where $M$ is as in \eqref{boundforn}.  Then, for $Z$ sufficiently large, one has
\[
\sum_{{\begin{substack} {p \leq x \\ \forall \ell \in [Y, Z), \\  p \nequiv 1 \pmod{\ell}} \end{substack}}} 1 \; \ll \; \frac{\log Y}{\log Z} \cdot \pi(x),
\]
with an absolute implied constant.
\end{proposition}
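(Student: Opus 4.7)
The plan is to apply the effective Chebotarev density theorem (Theorem~\ref{effectivechebotarevthm}) to the cyclotomic field $K := \mbq(\zeta_n)$, where
\[
n := \prod_{Y \leq \ell < Z} \ell.
\]
Under the standard identification $\gal(K/\mbq) \simeq (\mbz/n\mbz)^\times$, the Frobenius at $p \nmid n$ corresponds to the class of $p$. Let $\mc{C} \subseteq (\mbz/n\mbz)^\times$ be the (conjugation-stable) subset of classes $a$ satisfying $a \not\equiv 1 \pmod{\ell}$ for every $\ell \in [Y, Z)$. Then a prime $p \nmid n$ is counted on the left-hand side of the proposition if and only if $\frob_p \subseteq \mc{C}$; the primes dividing $n$ contribute at most $\pi(Z) \ll Z/\log Z$, which is negligible compared with the target bound.

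The second step is to compute the density $|\mc{C}|/|G|$. By the Chinese Remainder Theorem,
\[
\frac{|\mc{C}|}{|G|} = \prod_{Y \le \ell < Z} \frac{\ell - 2}{\ell - 1},
\]
and expanding $\log(1 - 1/(\ell-1)) = -1/\ell + O(1/\ell^2)$ and summing, Mertens' theorem $\sum_{\ell \le T} 1/\ell = \log\log T + B + O(1/\log T)$ yields
\[
\prod_{Y \le \ell < Z}\!\left(1 - \frac{1}{\ell-1}\right) \asymp \frac{\log Y}{\log Z},
\]
with absolute implied constants, giving exactly the claimed main term $\asymp (\log Y/\log Z)\pi(x)$.

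The third step is to verify the hypothesis \eqref{xbigenough} and control the Chebotarev error. For $K = \mbq(\zeta_n)$ one has $[K:\mbq] = \phi(n)$, $\log |d_K| \le \phi(n)\log n$, and $|d_K|^{1/[K:\mbq]} \le n$. Invoking \eqref{boundforn} together with the hypothesis $Z \le \frac{1}{3M+1}\log\log x$ gives
\[
n \;\le\; e^{MZ} \;\le\; (\log x)^{M/(3M+1)},
\]
so both $\phi(n)$ and (up to a factor $\log n \ll \log\log x$) $\log|d_K|$ are bounded by a fixed power of $\log x$ strictly less than $1/2$. A short computation shows
\[
\sqrt{\log x/\phi(n)} \;\ge\; (\log x)^{(2M+1)/(2(3M+1))},
\]
which dominates $c_2 \max\{\log|d_K|,\,|d_K|^{1/[K:\mbq]}\}$ once $Z$ is large enough, so \eqref{xbigenough} holds. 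The resulting error term is bounded by
\[
|\mc{C}|\cdot x \cdot \exp\!\left(-c_1 \sqrt{\log x/\phi(n)}\right) \;\ll\; x\cdot\exp\!\left(-c_1(\log x)^{\delta}\right)
\]
for a fixed $\delta > 0$; this beats $(\log Y/\log Z)\pi(x)$ by an exponential-in-a-power-of-$\log x$ margin.

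The main technical obstacle is precisely this bookkeeping: the exponent $\frac{1}{3M+1}$ in the hypothesis on $Z$ is tuned so that $n$, $\phi(n)$, and $\log|d_K|$ stay as small powers of $\log x$, which simultaneously (i) makes condition \eqref{xbigenough} verifiable and (ii) leaves enough slack in $\sqrt{\log x/\phi(n)}$ for the Chebotarev error to be negligible against the main term. Assembling the three steps yields the proposition.
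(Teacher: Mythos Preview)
Your proposal is correct and follows essentially the same approach as the paper: apply the effective Chebotarev theorem to $K=\mbq(\zeta_n)$ with $n=\prod_{Y\le\ell<Z}\ell$ and $\mc{C}=\prod_{Y\le\ell<Z}\bigl((\mbz/\ell\mbz)^\times\setminus\{1\}\bigr)$, estimate $|\mc{C}|/|G|\asymp\log Y/\log Z$ via Mertens, and use $n\le e^{MZ}\le(\log x)^{M/(3M+1)}$ to verify \eqref{xbigenough} and to show the Chebotarev error is $\ll x\exp(-c_1(\log x)^\delta)$ for some fixed $\delta>0$. The only cosmetic difference is that the paper records the cruder bound $e^{MZ}<(\log x)^{1/3}$ rather than your sharper exponent $M/(3M+1)$, but the argument is the same.
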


Our next proposition bounds the second sum in \eqref{initialdecomposition}.  
\begin{proposition} \label{secondsumboundprop}
Suppose that $f : \mbq \longrightarrow \mbq$ is not a global power map.  There exists constants $a_f, b_f > 0$ so that, provided
\begin{equation*} 
a_f \leq Y \leq Z \leq  \left( \frac{ \log x }{ (6c_2 \log \log x )^2} \right)^{1/15},
\end{equation*}
(where $c_2$ is the constant appearing in \eqref{xbigenough}) then one has
\[
\sum_{Y \leq \ell < Z} \sum_{{\begin{substack} {p \in S_f(x) \\ p \equiv 1 \pmod{\ell}} \end{substack}}} 1 \; \ll \; \frac{1}{Y \log Y} \cdot \pi(x) + b_f,
\]
with an absolute implied constant.
\end{proposition}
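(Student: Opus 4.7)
The approach is to apply the effective Chebotarev density theorem (\thmref{effectivechebotarevthm}) to a carefully chosen Kummer extension of $\mbq$, after translating the condition $p \in S_f$ with $p \equiv 1 \pmod{\ell}$ into a Frobenius condition.

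Begin by invoking \corref{threenumberslemma} with $L = 2$ to obtain square-free integers $n_1, n_2$ with $f(n_i) \notin n_i^\mbz \cup -n_i^\mbz$; inspection of the iterative construction in the proof of that corollary confirms that, in either case (a single prime or a product of two primes), $n_i$ and $f(n_i)$ are $\mbq$-linearly independent in $W := (\mbq^\times/\{\pm 1\}) \otimes \mbq$. For each odd prime $\ell$, set
\[
M_\ell := \mbq(\zeta_\ell, n_1^{1/\ell}, n_2^{1/\ell}, f(n_1)^{1/\ell}, f(n_2)^{1/\ell})
\]
and $V_\ell := \gal(M_\ell/\mbq(\zeta_\ell))$. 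By Kummer theory, $V_\ell$ embeds into $\mbf_\ell^4$ via $\sigma \mapsto (\alpha_1, \alpha_2, \beta_1, \beta_2)$, with $\sigma(n_i^{1/\ell}) = \zeta_\ell^{\alpha_i} n_i^{1/\ell}$ and $\sigma(f(n_i)^{1/\ell}) = \zeta_\ell^{\beta_i} f(n_i)^{1/\ell}$, and for all but finitely many $\ell$, $\dim_{\mbf_\ell} V_\ell$ equals the rank $r \in \{2, 3, 4\}$ of the image of $\{n_1, n_2, f(n_1), f(n_2)\}$ in $W$.

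If $p \in S_f(x)$, $p \equiv 1 \pmod{\ell}$, and $p$ is unramified in $M_\ell$, then $f(n_i) \equiv n_i^{k_p} \pmod p$ yields, upon raising to the $(p-1)/\ell$-th power, $\beta_i \equiv k_p \alpha_i \pmod \ell$. Hence $\frob_p$ lies in
\[
\mathcal{S} := \bigcup_{k \in \mbf_\ell} V_k, \qquad V_k := \{(\alpha_1, \alpha_2, k\alpha_1, k\alpha_2) : \alpha_1, \alpha_2 \in \mbf_\ell\} \subseteq \mbf_\ell^4.
\]
Since $\mathcal{S}$ is $\mbf_\ell^\times$-invariant, $\mathcal{S} \cap V_\ell$ is a union of conjugacy classes in $\gal(M_\ell/\mbq)$. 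A case analysis by $r$---relying on the fact that the hypothesis $f(n_i) \notin n_i^\mbz \cup -n_i^\mbz$ excludes, for $\ell$ sufficiently large, any containment $V_\ell \subseteq V_k$ (equivalently, $f(n_i)/n_i^k \in (\mbq^\times)^\ell$ cannot hold simultaneously for $i = 1, 2$ and any $k \in \mbf_\ell$)---yields the uniform bound $|\mathcal{S} \cap V_\ell|/|V_\ell| \leq 2/\ell$ for $\ell \geq a_f$.

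Applying \thmref{effectivechebotarevthm} to $M_\ell$ with the conjugacy class $\mathcal{S} \cap V_\ell$, together with the elementary bounds $[M_\ell : \mbq] \leq \ell^5$ and $|d_{M_\ell}|^{1/[M_\ell : \mbq]} \ll_f \ell$ (only $\ell$ and the fixed finite set of primes dividing $n_1 n_2 f(n_1) f(n_2)$ ramify in $M_\ell/\mbq$, each with bounded conductor exponent), one finds that the hypothesis $Z \leq \left(\log x/(6c_2\log\log x)^2\right)^{1/15}$ is precisely the bound required to ensure \eqref{xbigenough} uniformly for $\ell < Z$. This produces
\[
\left|\{p \in S_f(x) : p \equiv 1 \!\!\!\!\pmod{\ell}\}\right| \leq \frac{2\pi(x)}{\ell(\ell-1)} + E_\ell(x),
\]
where $E_\ell(x) \ll \ell^3 x \exp\bigl(-c_1\sqrt{\log x/[M_\ell : \mbq]}\bigr)$ decays rapidly in $x$. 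Summing over $Y \leq \ell < Z$, the main term contributes $\ll \pi(x)/(Y\log Y)$ via partial summation on $\sum_\ell \ell^{-2}$ over primes $\ell \geq Y$, while the cumulative errors---together with the finitely many exceptional primes $p \mid n_1 n_2 f(n_1) f(n_2)$ and $p = \ell$---absorb into the additive constant $b_f$ for $x \geq c_f$. The main obstacle is the counting estimate $|\mathcal{S} \cap V_\ell|/|V_\ell| \leq 2/\ell$: one must handle all possible rank patterns of $\langle n_1, n_2, f(n_1), f(n_2)\rangle$ and the associated $\mbq$-linear relations, verifying in each case that the degenerate inclusions $V_\ell \subseteq V_k$ are ruled out by the mild independence furnished by the corollary.
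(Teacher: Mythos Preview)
Your approach is essentially that of the paper: translate the condition $p\in S_f$, $p\equiv 1\pmod\ell$ into a Frobenius condition in the Kummer field $\mbq(\zeta_\ell,n_1^{1/\ell},n_2^{1/\ell},f(n_1)^{1/\ell},f(n_2)^{1/\ell})$, apply \thmref{effectivechebotarevthm}, and sum over $\ell\in[Y,Z)$.  The one substantive difference is your choice of $n_2$.  The paper (\corref{lowerdegreeboundcor}) takes $n_1$ square-free with $f(n_1)\notin n_1^{\mbz}\cup -n_1^{\mbz}$ but then sets $n_2$ equal to a prime coprime to $n_1 f(n_1)$, which forces $n_1,n_2,f(n_1)$ to be multiplicatively independent and hence $r\geq 3$; the counting in \lemref{chebotarevboundlemma} then only has to treat $r\in\{3,4\}$.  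Your choice (both $n_i$ from \corref{threenumberslemma}) allows $r=2$, and here your stated justification is incomplete.  Merely excluding $V_\ell\subseteq V_k$ does not give $|\mathcal S\cap V_\ell|/|V_\ell|\leq 2/\ell$: a $2$-plane in $\mbf_\ell^4$ can meet every $V_k$ in a line without being contained in any single $V_k$, making the ratio close to $1$.  What actually saves the $r=2$ case is that $n_1,n_2$ are coprime (from the iterative construction), hence independent modulo $\ell$-th powers for large $\ell$; this forces the projection of $V_\ell$ onto the first two coordinates to be an isomorphism, so $V_\ell=\{({\bf a},M{\bf a}):{\bf a}\in\mbf_\ell^2\}$ for some $2\times 2$ matrix $M$, and $V_k\cap V_\ell$ is nontrivial only when $k$ is an eigenvalue of $M$, i.e.\ for at most two values of $k$.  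You should make this step explicit.

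One minor correction: the Chebotarev error terms $\sum_\ell E_\ell(x)\ll x/(\log x)^A$ are absorbed into the main term $\pi(x)/(Y\log Y)$ (since $Y<Z\ll(\log x)^{1/15}$), not into the additive constant $b_f$; the constant $b_f$ accounts only for the finitely many ramified primes dividing $n_1 n_2\,\num(f(n_1))\den(f(n_1))\num(f(n_2))\den(f(n_2))$.
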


Inserting the results of Propositions \ref{firstsumboundprop} and \ref{secondsumboundprop} into \eqref{initialdecomposition} and putting 
\[
Y = \frac{\log \log \log x}{(\log \log \log \log x)^2}, \quad\quad Z = \frac{1}{3M+1} \cdot \log \log x,
\]
we see that Theorem \ref{maintheorem} follows.  

The remainder of the paper is devoted to proving the two propositions.  To prove Proposition \ref{firstsumboundprop}, we will apply Theorem \ref{effectivechebotarevthm} with $K$ equal to a cyclotomic field:
\begin{equation*} 
K =  \mbq(\zeta_{n_{Y,Z}}) \quad\quad \left( n_{Y,Z} := \prod_{Y \leq \ell < Z} \ell \right).
\end{equation*}
To prove Proposition \ref{secondsumboundprop}, we will apply the same theorem with $K$ equal to a field extension of the form
\begin{equation*} 
K = \mbq \left( \zeta_\ell, n_1^{1/\ell}, n_2^{1/\ell}, f(n_1)^{1/\ell}, f(n_2)^{1/\ell} \right),
\end{equation*} 
for appropriately chosen $n_1, n_2 \in \mbn$.

\subsection{Cyclotomic extensions} \label{cyclotomicextensionssection}

We will now state a few preparatory lemmas about the discriminant and Frobenius automorphism in cyclotomic fields.

\begin{lemma} \label{cyclotomicdiscboundlemma}
Let $n \geq 1$ be a positive integer and let $K = \mbq(\zeta_n)$.  The discriminant $d_K$ is given by
\[
d_K = (-1)^{\varphi(n)/2} \cdot \frac{n^{\varphi(n)}}{\prod_{p \mid n} p^{\varphi(n)/(p-1)}}.
\]
In particular
\[
| d_K | \leq n^{\varphi(n)},
\]
and a prime number $p$ is ramified in $\mbq(\zeta_n)$ if and only if $p$ divides $n$.
\end{lemma}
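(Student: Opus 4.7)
The plan is to reduce the computation of $d_K$ for $K = \mbq(\zeta_n)$ to the prime power case $n = p^k$ by exploiting the compositum structure of cyclotomic fields, and then to handle the prime power case by a direct norm calculation. First I would recall that for distinct primes $p \neq q$, the fields $\mbq(\zeta_{p^a})$ and $\mbq(\zeta_{q^b})$ are linearly disjoint over $\mbq$ and (once the prime-power case has been treated) their discriminants are coprime. The standard formula for the discriminant of a compositum of two number fields with coprime discriminants, namely
\[
d_{K_1 K_2} \;=\; d_{K_1}^{[K_2:\mbq]} \cdot d_{K_2}^{[K_1:\mbq]},
\]
then yields the claimed formula for general $n = \prod p^{a_p}$ by induction on the number of prime factors, provided we know it for each prime power $p^{a_p}$.

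For the prime power case, I would use the fact (standard for cyclotomic fields) that $\mc{O}_K = \mbz[\zeta]$ with $\zeta := \zeta_{p^k}$, so that $d_K = (-1)^{\varphi(p^k)/2} \mathrm{Nm}_{K/\mbq}(\Phi_{p^k}'(\zeta))$. Differentiating the identity $X^{p^k} - 1 = \Phi_{p^k}(X)(X^{p^{k-1}} - 1)$ and evaluating at $\zeta$ gives
\[
\Phi_{p^k}'(\zeta) \;=\; \frac{p^k \, \zeta^{-1}}{\zeta^{p^{k-1}} - 1}.
\]
The norm of the numerator is $\pm p^{k\varphi(p^k)}$, since $\mathrm{Nm}(\zeta) = \pm 1$. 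For the denominator, note that $\zeta^{p^{k-1}}$ is a primitive $p$-th root of unity, and a tower computation gives $\mathrm{Nm}_{K/\mbq}(\zeta^{p^{k-1}} - 1) = \pm \Phi_p(1)^{[K:\mbq(\zeta_p)]} = \pm p^{p^{k-1}}$. Combining these and noting that $\varphi(p^k)/(p-1) = p^{k-1}$ gives $|d_K| = p^{k\varphi(p^k) - \varphi(p^k)/(p-1)}$, which matches the asserted formula specialized to $n = p^k$.

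Putting the two parts together produces the stated formula for general $n$. The bound $|d_K| \leq n^{\varphi(n)}$ is then immediate since the denominator $\prod_{p \mid n} p^{\varphi(n)/(p-1)}$ is a positive integer. For the ramification statement, I would invoke the classical fact that a prime ramifies in a number field if and only if it divides the discriminant; inspection of the formula shows that the primes dividing $|d_K|$ are precisely those dividing $n$. The main obstacle is the careful bookkeeping in the prime-power norm calculation (signs and exponents of $p$); everything else — the compositum reduction, the discriminant bound, and the ramification corollary — is essentially routine once that computation is in hand.
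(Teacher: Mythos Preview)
Your argument is correct and is precisely the standard textbook derivation---indeed it is the proof of Proposition~2.7 in Washington's \emph{Introduction to Cyclotomic Fields}, which is exactly the reference the paper cites in lieu of writing the proof out. The paper gives no independent argument, so there is nothing further to compare.
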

\begin{proof}
This is classical; see for instance \cite[Proposition 2.7, p. 12]{washington}.  
\end{proof}

Any prime $p$ not dividing $d_K$ is unramified in $K$, and given a prime ideal $\mf{P} \subseteq \mc{O}_K$ lying above $p\mbz \subseteq \mbz$, we may consider the Frobenius automorphism at $\mf{P}$ in $\gal(K/\mbq)$, which we denote by
\[
\frob_\mf{P} \in \gal(K/\mbq).
\]
When $K$ is abelian over $\mbq$, the automorphism $\frob_\mf{P}$ is independent of the choice of $\mf{P}$ over $p$.  We will thus denote it by $\frob_p$ in this case, since it depends only on $p$.  Furthermore, when $K = \mbq(\zeta_n)$, one has the following result, which identifies $\frob_p \in \gal(\mbq(\zeta_n)/\mbq)$, under the canonical group isomorphism
\begin{equation} \label{galQzetanoverQ}
\begin{split}
\gal(\mbq(\zeta_n)/\mbq) &\leftrightarrow (\mbz/n\mbz)^\times \\
 (\zeta_n \mapsto \zeta_n^a) & \mapsto a.
\end{split}
\end{equation}
\begin{lemma} \label{cyclotomicfroblemma}
If $p$ does not divide $n$, then $p$ is unramified in $\mbq(\zeta_n)$.  Furthermore, under the isomorphism \eqref{galQzetanoverQ}, the Frobenius automorphism $\frob_p \in \gal(\mbq(\zeta_n)/\mbq)$ is identified with $p \in (\mbz/n\mbz)^\times$.
\end{lemma}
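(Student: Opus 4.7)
The plan is that this is a classical fact, and the two halves have essentially independent (and standard) proofs. As with the preceding discriminant lemma, I would give a brief self-contained sketch and point to a reference such as \cite[Chapter 2]{washington}.

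For the unramification statement, I would simply combine the hypothesis $p \nmid n$ with \lemref{cyclotomicdiscboundlemma}: that lemma asserts (as its final clause) that a prime is ramified in $\mbq(\zeta_n)$ if and only if it divides $n$. Equivalently, by the standard discriminant criterion, $p$ is unramified in $K$ whenever $p \nmid d_K$, and the explicit formula for $d_K$ given in \lemref{cyclotomicdiscboundlemma} shows that the rational primes dividing $d_K$ are exactly those dividing $n$.

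For the identification of $\frob_p$, the first ingredient is that $\mc{O}_K = \mbz[\zeta_n]$, which is the standard computation of the ring of integers of a cyclotomic field. Given this, I would invoke the defining characterization of the Frobenius automorphism at an unramified prime $\mf{P} \subseteq \mc{O}_K$ above $p$: it is the unique element $\gs \in \gal(K/\mbq)$ satisfying
\[
\gs(x) \equiv x^p \pmod{\mf{P}} \quad \text{for all } x \in \mc{O}_K.
\]
I would then verify this congruence for the particular automorphism $\gs_p : \zeta_n \mapsto \zeta_n^p$. Every $x \in \mc{O}_K = \mbz[\zeta_n]$ may be written as $x = g(\zeta_n)$ for some $g(T) \in \mbz[T]$, and expanding $g(\zeta_n)^p$ modulo $p$ (using the fact that the $p$-th power map is a ring homomorphism on $\mbf_p$-algebras together with Fermat's little theorem on the integer coefficients of $g$) yields
\[
g(\zeta_n)^p \equiv g(\zeta_n^p) = \gs_p(g(\zeta_n)) \pmod{p\mc{O}_K}.
\]
Since $p\mc{O}_K \subseteq \mf{P}$, this verifies the defining property, so $\frob_p = \gs_p$, which under the isomorphism \eqref{galQzetanoverQ} is identified with $p \pmod{n}$.

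There is no real obstacle here; the only subtlety worth flagging is that the Frobenius at $p$ is well-defined (independent of $\mf{P}$) precisely because $\gal(\mbq(\zeta_n)/\mbq)$ is abelian, so all decomposition groups over $p$ coincide — this is why we may legitimately write $\frob_p$ rather than $\frob_\mf{P}$, as already noted in the discussion preceding the lemma.
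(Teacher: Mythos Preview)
Your argument is correct and is exactly the standard proof; the paper itself simply cites \cite[Lemma 2.12]{washington} and the discussion thereafter, without giving details. Your sketch is essentially what that reference contains, so there is no meaningful difference in approach.
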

\begin{proof}
See for instance \cite[Lemma 2.12]{washington}, and the discussion thereafter.
\end{proof}

\subsection{Proof of Proposition \ref{firstsumboundprop}} \label{proofoffirstpropositionsection}

We are now ready to prove Proposition \ref{firstsumboundprop}.  Notice that, by Lemmas \ref{cyclotomicdiscboundlemma} and  \ref{cyclotomicfroblemma}, one has that for any prime $p$,
\[
\forall \ell \in [Y, Z), \,   p \bnequiv 1 \pmod{\ell} 
\; \Longrightarrow \; 
p \in [Y, Z) \, \text{ or } \,
\frob_p \in \mc{C},
\]
where, under $\gal(\mbq(\zeta_n)/\mbq) \simeq (\mbz/n\mbz)^\times$ and the isomorphism 
\[
(\mbz/n_{Y,Z}\mbz)^\times \simeq \prod_{Y \leq \ell < Z} (\mbz/\ell\mbz)^\times
\]
of the Chinese remainder theorem, 
\begin{equation*} 
\mc{C} := \prod_{Y \leq \ell < Z} \left( (\mbz/\ell\mbz)^\times - \{ 1 \} \right).
\end{equation*}
Thus we have
\begin{equation} \label{cyclotomiccalc1}
\sum_{{\begin{substack} {p \leq x \\ \forall \ell \in [Y, Z), \\  p \nequiv 1 \pmod{\ell}} \end{substack}}} 1 
\; \leq \; 
\pi(x; K/\mbq, \mc{C}) + \pi(Z).
\end{equation}

We will now apply Theorem \ref{effectivechebotarevthm} to bound $\pi(x; K/\mbq, \mc{C})$.  We begin by using Lemma \ref{cyclotomicdiscboundlemma} to establish a bound for $Z$ sufficient to guarantee that condition \eqref{xbigenough} is satisfied.

Note that
\[
[K : \mbq ] = \varphi(n_{Y,Z}) \leq n_{Y,Z} \leq e^{MZ},
\]
by \eqref{boundforn}.  Thus,
\[
| d_K |^{1/[K : \mbq ]} \leq (n_{Y,Z}^{\varphi(n_{Y,Z})})^{1/\varphi(n_{Y,Z})} \leq e^{MZ},
\]
and also
\[
\log | d_K | \leq \varphi(n_{Y,Z}) \log n_{Y,Z} \leq e^{MZ} \cdot (MZ).
\]
We therefore have the following corollary of Lemma \ref{cyclotomicdiscboundlemma}.
\begin{cor}
For $K =  \mbq(\zeta_{n_{Y,Z}})$ with $\ds n_{Y,Z} := \prod_{Y \leq \ell < Z} \ell$, one has
\[
\max \left\{ \log | d_K |, |d_K|^{1/[K : \mbq ]} \right\} \leq MZ e^{MZ}.
\]
In particular, if
\begin{equation} \label{boundforZintermsofx}
Z \leq \frac{1}{3M + 1} \log \log x,
\end{equation}
then for $x$ large enough, \eqref{xbigenough} is satisfied in this case.
\end{cor}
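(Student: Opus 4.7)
The first inequality is essentially bookkeeping: the paragraph immediately preceding the corollary already establishes $[K:\mbq] = \varphi(n_{Y,Z}) \leq e^{MZ}$, $|d_K|^{1/[K:\mbq]} \leq e^{MZ}$ and $\log|d_K| \leq MZ\, e^{MZ}$ from Lemma~\ref{cyclotomicdiscboundlemma} together with the prime-product bound \eqref{boundforn}. Since $MZ \geq 1$ once $Z$ is moderately large, the first of these two quantities dominates and the max is bounded above by $MZ\, e^{MZ}$, giving the first claim.

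For the ``In particular'' statement, the plan is to insert these bounds into hypothesis \eqref{xbigenough} and check the resulting inequality under the stated restriction on $Z$. Using $[K:\mbq] \leq e^{MZ}$, it suffices to verify
\[
\sqrt{\frac{\log x}{e^{MZ}}} \;\geq\; c_2 \cdot MZ\, e^{MZ},
\]
which, after squaring and rearranging, becomes
\[
\log x \;\geq\; c_2^{2}\, M^{2}\, Z^{2}\, e^{3MZ}.
\]

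Now I would substitute the hypothesis $Z \leq \frac{1}{3M+1}\log\log x$. This gives $3MZ \leq \frac{3M}{3M+1}\log\log x$, and hence $e^{3MZ} \leq (\log x)^{3M/(3M+1)}$. Writing the exponent as $1 - \frac{1}{3M+1}$, the target inequality reduces to
\[
c_2^{2}\, M^{2}\, Z^{2} \;\leq\; (\log x)^{1/(3M+1)}.
\]
Since $Z \leq \log\log x$, the left-hand side is $O\bigl((\log\log x)^{2}\bigr)$, whereas the right-hand side is a positive power of $\log x$. Any positive power of $\log x$ eventually dominates any power of $\log\log x$, so the inequality holds for all sufficiently large $x$, which is exactly what \eqref{xbigenough} requires.

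There is no genuine obstacle here; the argument is a routine bookkeeping of growth rates. The only point worth being careful about is that one must use \emph{both} bounds $[K:\mbq] \leq e^{MZ}$ and $\max\{\log|d_K|,\, |d_K|^{1/[K:\mbq]}\} \leq MZ\,e^{MZ}$ when plugging into \eqref{xbigenough}, since they contribute asymmetrically (one on each side of the inequality), and the factor $e^{3MZ}$ that emerges must be absorbed into $\log x$ with room to spare for the $Z^{2}$ factor; the choice of the constant $\frac{1}{3M+1}$ (rather than $\frac{1}{3M}$) is precisely what leaves this margin.
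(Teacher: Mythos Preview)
Your proposal is correct and matches the paper's approach: the corollary is stated there without a separate proof, as an immediate consequence of the three bounds derived in the preceding paragraph from Lemma~\ref{cyclotomicdiscboundlemma} and \eqref{boundforn}. Your verification of the ``In particular'' clause---squaring, using $e^{3MZ} \leq (\log x)^{3M/(3M+1)}$, and absorbing the residual $Z^{2}$ factor into the leftover $(\log x)^{1/(3M+1)}$---is exactly the intended computation, and your remark that the constant $\frac{1}{3M+1}$ (rather than $\frac{1}{3M}$) is chosen to leave this margin is on point.
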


Returning to \eqref{cyclotomiccalc1}, note that
\[
| \mc{C} | = \prod_{Y \leq \ell < Z} (\ell - 2) = [K : \mbq ] \cdot \prod_{Y \leq \ell < Z} \left( 1 - \frac{1}{\ell - 1} \right) \ll [K : \mbq ] \cdot \frac{\log Y}{\log Z},
\]
by Merten's theorem.  Furthermore, \eqref{boundforZintermsofx} implies that
\[
e^{MZ} < (\log x)^{1/3}.
\]
Thus, assuming \eqref{boundforZintermsofx}, Theorem \ref{effectivechebotarevthm} implies 
\begin{equation} \label{cyclotomiccalc2}
\begin{split}
\pi(x; K/\mbq, \mc{C}) 
& = \frac{| \mc{C} |}{ [K : \mbq ]} \cdot \pi(x) + O \left( | \mc{C} | \cdot x \cdot \exp\left( -c_1 \sqrt{\frac{\log x}{[K : \mbq ]}} \right) \right) \\
& \ll  \frac{\log Y}{\log Z} \cdot \pi(x) +  e^{MZ} \cdot x \cdot \exp\left( -c_1 \sqrt{\frac{\log x}{e^{Mz}}} \right) \\
& \leq \frac{\log Y}{\log Z} \cdot \pi(x) +  (\log x)^{1/3} \cdot x \cdot \exp\left( -c_1 (\log x)^{1/3} \right).
\end{split}
\end{equation}
For any $A > 0$ one has 
\begin{equation} \label{expboundpoweroflog}
\exp\left( -c_1 (\log x)^{1/3} \right) \ll_A \frac{1}{(\log x)^A},
\end{equation}
so by inserting \eqref{cyclotomiccalc2} into \eqref{cyclotomiccalc1} we conclude that
\[
\sum_{{\begin{substack} {p \leq x \\ \forall \ell \in [Y, Z), \\  p \nequiv 1 \pmod{\ell}} \end{substack}}} 1 \ll \frac{\log Y}{\log Z} \cdot \pi(x) +  \frac{x}{(\log x)^2} + \frac{Z}{\log Z}.
\]
In light of \eqref{boundforZintermsofx} and the prime number theorem, we have
\[
\frac{Z}{\log Z} \ll \log \log x \ll \frac{x}{(\log x)^2} \ll \frac{x}{ \log x \log \log \log x} \ll \frac{\log Y}{\log Z} \cdot \pi(x)
\]
and so this finishes the proof of Proposition \ref{firstsumboundprop}.

\subsection{Kummer extensions} \label{kummerextensionssection}

To prove Proposition \ref{secondsumboundprop}, we will apply Theorem \ref{effectivechebotarevthm} to a field extension of the form 
\begin{equation*} 
K = \mbq \left( \zeta_\ell, n_1^{1/\ell}, n_2^{1/\ell}, f(n_1)^{1/\ell}, f(n_2)^{1/\ell} \right),
\end{equation*} 
for appropriately chosen $n_1, n_2 \in \mbn$.  
In order to do this, we need some control on the Galois group $\gal(K/\mbq)$ in this case.  If $f$ is a global power map then $K = \mbq(\zeta_\ell, n_1^{1/\ell}, n_2^{1/\ell})$, and one cannot deduce the result of Proposition \ref{secondsumboundprop}.  In case $f$ is not a global power map but nevertheless $| S_f | = \infty$, then it is still not immediately clear that one may find $n_1, n_2 \in \mbn$ for which $[ K : \mbq(\zeta_\ell) ] = \ell^4$ for all primes $\ell$ which are large enough, but we show that one may achieve $[ K : \mbq(\zeta_\ell) ] \geq \ell^3$ for $\ell \gg_f 1$, which suffices for our purposes (see Corollary \ref{lowerdegreeboundcor} below).

We begin by reviewing some fundamental facts about Kummer extensions.  For any integers $m \geq 0$ and $n \geq 1$ and vector ${\bf{c}} = (c_1, c_2, \dots, c_m) \in (\mbq^\times)^m$, we will call a number field of the form
\[
K = \mbq(\zeta_n, {\bf{c}}^{1/n}) := 
\mbq(\zeta_n, c_1^{1/n}, c_2^{1/n}, \dots, c_m^{1/n})
\]
a \emph{Kummer extension} (in case $m = 0$, we interpret this as $\mbq(\zeta_n, {\bf{c}}^{1/n}) := \mbq(\zeta_n)$).  In our application, we will deal exclusively with the case where $n = \ell$ is an odd prime number, and we begin by describing the associated Galois group.  Consider the group
\[
(\mbz/\ell\mbz)^\times \ltimes (\mbz/\ell\mbz)^m,
\]
where the semi-direct product is defined via the multiplicative action of $(\mbz/\ell\mbz)^\times$ on $(\mbz/\ell\mbz)^m$, or explicitly 
\[
(a_1,{\bf{b}}_1) \cdot (a_2, {\bf{b}}_2) = (a_1a_2, {\bf{b}}_2 + a_2{\bf{b}}_1),
\]
where ${\bf{b}}_i \in (\mbz/\ell\mbz)^m$.  (Equivalently, the embedding
\begin{equation*}  
\begin{split}
(\mbz/\ell\mbz)^\times \ltimes (\mbz/\ell\mbz)^m &\hookrightarrow GL_{m+1}(\mbz/\ell\mbz) \\
(a, {\bf{b}}) &\mapsto 
\begin{pmatrix}
a & 0 \\
{\bf{b}} & I
\end{pmatrix},
\end{split}
\end{equation*}
where $I$ denotes the $m \times m$ identity matrix, allows one to regard $(\mbz/\ell\mbz)^\times \ltimes (\mbz/\ell\mbz)^m$ as a subgroup of $GL_{m+1}(\mbz/\ell\mbz)$.)  
There is an embedding of groups\footnote{Here we are interpreting $\gal(\mbq(\zeta_\ell, {\bf{c}}^{1/\ell})/\mbq)$ as operating on the \emph{right}.}
\begin{equation} \label{kummerembedding}
\begin{split}
\gal(\mbq(\zeta_\ell, {\bf{c}}^{1/\ell})/\mbq) &\hookrightarrow  (\mbz/\ell\mbz)^\times \ltimes (\mbz/\ell\mbz)^m \\
\begin{pmatrix}
\zeta_\ell & \mapsto & \zeta_\ell^a \\
c_i^{1/\ell} & \mapsto & c_i^{1/\ell} \cdot \zeta_\ell^{b_i}
\end{pmatrix}
&\mapsto
(a, {\bf{b}}),
\end{split}
\end{equation}
where  ${\bf{b}} = (b_1, b_2, \dots b_m)$. 
What is the image of this embedding?  In general, the image depends on whether (and to what extent) there exist multiplicative relations
\begin{equation} \label{relationsforc}
{\bf{c}}^{{\bf{d}}/\ell} := \prod_{i = 1}^m (c_i^{1/\ell})^{d_i} \in \mbq^\times,
\end{equation}
where in the above, ${\bf{d}} = (d_1, d_2, \dots, d_m) \in (\mbz/\ell\mbz)^m$.  In our application, we will need to understand the image of this embedding, even in the case where nontrivial relations such as \eqref{relationsforc} exist.

Let $V_{\bf{c}}(\ell)$, respectively $V_{\bf{c}}^\perp(\ell)$ denote the $\mbz/\ell\mbz$-vector subspaces
\begin{equation} \label{defofV}
\begin{split}
V_{\bf{c}}(\ell) &:= \{ {\bf{d}} \in (\mbz/\ell\mbz)^m : \; \text{ the relation \eqref{relationsforc} holds} \} \\
V_{\bf{c}}^\perp(\ell) &:= \{ {\bf{b}} \in (\mbz/\ell\mbz)^m : \; \forall {\bf{d}} \in V_{\bf{c}}(\ell), \, \sum_{i=1}^m b_i d_i \equiv 0 \pmod{\ell} \}.
\end{split}
\end{equation}
It follows from \eqref{relationsforc} and \eqref{defofV} that the image of the embedding \eqref{kummerembedding} is equal to the subgroup
\[
(\mbz/\ell\mbz)^\times \ltimes V_{\bf{c}}^\perp(\ell) \subseteq (\mbz/\ell\mbz)^\times \ltimes (\mbz/\ell\mbz)^m.
\]
The following lemma summarizes our discussion, and uses the notation
\begin{equation} \label{defofd}
d_{\bf{c}} := \dim_{\mbz/\ell\mbz} V_{\bf{c}}^\perp(\ell) \in \mbn \cup \{ 0 \}.
\end{equation}
\begin{lemma}
The function \eqref{kummerembedding} gives an isomorphism of groups
\[
\gal(\mbq(\zeta_\ell, {\bf{c}}^{1/\ell})/\mbq) \simeq (\mbz/\ell\mbz)^\times \ltimes V_{\bf{c}}^\perp(\ell).
\]
Furthermore (after possibly re-labelling indices) we have that
\[
\mbq(\zeta_\ell, c_1^{1/\ell}, c_2^{1/\ell}, \dots, c_m^{1/\ell}) = \mbq(\zeta_\ell, c_1^{1/\ell}, c_2^{1/\ell}, \dots, c_d^{1/\ell}),
\]
where $d = d_{\bf{c}}$ is defined by \eqref{defofd}.  This choice of $d$ is the smallest possible\footnote{In case $d = 0$, we make the interpretation $\mbq(\zeta_\ell, c_1^{1/\ell}, c_2^{1/\ell}, \dots, c_d^{1/\ell}) := \mbq(\zeta_\ell)$ and $ (\mbz/\ell\mbz)^\times \ltimes (\mbz/\ell\mbz)^d := (\mbz/\ell\mbz)^\times$.}.  In particular, one has
\begin{equation*} 
\begin{split}
\gal(\mbq(\zeta_\ell, c_1^{1/\ell}, c_2^{1/\ell}, \dots, c_m^{1/\ell})/\mbq) &= \gal(\mbq(\zeta_\ell, c_1^{1/\ell}, c_2^{1/\ell}, \dots, c_d^{1/\ell})/\mbq) \\
&\simeq (\mbz/\ell\mbz)^\times \ltimes (\mbz/\ell\mbz)^d.
\end{split}
\end{equation*}
\end{lemma}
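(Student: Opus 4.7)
The plan is to work in three stages: verify the map is a well-defined injective homomorphism, constrain its image to lie inside $(\mbz/\ell\mbz)^\times \ltimes V_{\bf{c}}^\perp(\ell)$, and then close the gap by a degree count via Kummer theory.

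First I would check that \eqref{kummerembedding} is well-defined and a homomorphism. Any $\sigma \in \gal(\mbq(\zeta_\ell, {\bf{c}}^{1/\ell})/\mbq)$ must send $\zeta_\ell$ to some primitive $\ell$-th root of unity, giving a well-defined $a \in (\mbz/\ell\mbz)^\times$. Since $c_i \in \mbq$ is fixed by $\sigma$, the element $\sigma(c_i^{1/\ell})/c_i^{1/\ell}$ is an $\ell$-th root of unity, yielding $b_i \in \mbz/\ell\mbz$. Injectivity is immediate since $\sigma$ is determined by its values on the generators. A direct computation of $\tau \circ \sigma$ acting on $c_i^{1/\ell}$ (using the right action convention) recovers precisely the semi-direct product law $(a_1,{\bf{b}}_1)\cdot(a_2,{\bf{b}}_2) = (a_1a_2,{\bf{b}}_2 + a_2{\bf{b}}_1)$.

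For the containment in $(\mbz/\ell\mbz)^\times \ltimes V_{\bf{c}}^\perp(\ell)$, I would fix ${\bf{d}} \in V_{\bf{c}}(\ell)$, so that $\gamma := \prod c_i^{d_i/\ell} \in \mbq^\times$, and apply a Galois element $\sigma \mapsto (a,{\bf{b}})$. On one hand $\sigma(\gamma) = \gamma$ because $\gamma$ is rational; on the other hand $\sigma(\gamma) = \zeta_\ell^{\,{\bf{b}}\cdot{\bf{d}}}\,\gamma$ from the definition of ${\bf{b}}$. Equating forces ${\bf{b}}\cdot{\bf{d}} \equiv 0 \pmod{\ell}$, so ${\bf{b}} \in V_{\bf{c}}^\perp(\ell)$.

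The crux is equality of orders, where I would invoke Kummer theory: $[\mbq(\zeta_\ell,{\bf{c}}^{1/\ell}):\mbq(\zeta_\ell)]$ equals the order of the subgroup $\Delta$ of $\mbq(\zeta_\ell)^\times/(\mbq(\zeta_\ell)^\times)^\ell$ generated by the images of $c_1,\dots,c_m$. I claim that $\prod c_i^{d_i}$ is an $\ell$-th power in $\mbq(\zeta_\ell)^\times$ if and only if ${\bf{d}} \in V_{\bf{c}}(\ell)$. The ``if'' direction is trivial. For the nontrivial direction, if $\prod c_i^{d_i} = \delta^\ell$ for some $\delta \in \mbq(\zeta_\ell)^\times$, then $\gamma := \prod c_i^{d_i/\ell}$ differs from $\delta$ by a root of unity, so $\gamma \in \mbq(\zeta_\ell)$; but $[\mbq(\gamma):\mbq] \in \{1,\ell\}$ since $\gamma^\ell \in \mbq$, and $\gcd(\ell,\ell-1)=1$ forces the degree to be $1$, i.e.\ $\gamma \in \mbq$. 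Therefore $|\Delta| = \ell^{m - \dim V_{\bf{c}}(\ell)} = \ell^{d_{\bf{c}}}$, which gives $[\mbq(\zeta_\ell,{\bf{c}}^{1/\ell}):\mbq] = (\ell-1)\ell^{d_{\bf{c}}} = |(\mbz/\ell\mbz)^\times \ltimes V_{\bf{c}}^\perp(\ell)|$; combined with the preceding containment, the image is exactly this subgroup. I expect this step --- the coprimality argument identifying $\ell$-th powers in $\mbq(\zeta_\ell)^\times$ among the monomials in the $c_i$'s --- to be the main (and only real) obstacle.

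Finally, the relabeling statement is linear algebra. The images of $c_1,\dots,c_m$ span a subspace of dimension $d_{\bf{c}}$ in the $\mbf_\ell$-vector space $\mbq(\zeta_\ell)^\times/(\mbq(\zeta_\ell)^\times)^\ell$; after permuting indices we may take $c_1,\dots,c_d$ as a basis for this subspace, so the remaining $c_i^{1/\ell}$ already lie in $\mbq(\zeta_\ell, c_1^{1/\ell},\dots,c_d^{1/\ell})$ (again using the equivalence from the Kummer step above). Minimality of $d$ is immediate because any smaller collection would generate a proper subspace of $\Delta$, hence a proper subfield. The displayed Galois group formula for this smaller field then follows by applying the first part of the lemma with $m$ replaced by $d$, where $V_{(c_1,\dots,c_d)}(\ell) = 0$ and so $V_{(c_1,\dots,c_d)}^\perp(\ell) = (\mbz/\ell\mbz)^d$.
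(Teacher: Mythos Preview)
Your approach is essentially the paper's---both rest on Kummer theory over $\mbq(\zeta_\ell)$---though the paper simply cites Lang for $\gal(\mbq(\zeta_\ell,{\bf{c}}^{1/\ell})/\mbq(\zeta_\ell)) \simeq B/(\mbq^\times)^\ell$ (with $B\subseteq\mbq^\times$ the subgroup generated by $(\mbq^\times)^\ell$ and the $c_i$) and then identifies $B/(\mbq^\times)^\ell$ with $(\mbz/\ell\mbz)^m/V_{\bf{c}}(\ell)$, whereas you unpack the argument by hand via containment plus a degree count.

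There is one slip at the crux. From $\gamma^\ell\in\mbq$ you assert $[\mbq(\gamma):\mbq]\in\{1,\ell\}$, but this misses a case: if $\gamma^\ell=r^\ell$ with $r\in\mbq^\times$ then $\gamma=\zeta_\ell^{\,j} r$ may have degree $\ell-1$ over $\mbq$, and your $\gcd(\ell,\ell-1)=1$ argument cannot exclude this. The repair is straightforward. First, if $a:=\prod c_i^{d_i}\notin(\mbq^\times)^\ell$ then $x^\ell-a$ is irreducible over $\mbq$ (standard for $\ell$ an odd prime), forcing $[\mbq(\gamma):\mbq]=\ell$, which contradicts $\gamma\in\mbq(\zeta_\ell)$; hence $a\in(\mbq^\times)^\ell$. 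This already yields the degree count $|\Delta|=\ell^{\,m-\dim V'}$ with $V'=\{{\bf{d}}:\prod c_i^{d_i}\in(\mbq^\times)^\ell\}$. The remaining identification $V'=V_{\bf{c}}(\ell)$---i.e.\ upgrading ``$a\in(\mbq^\times)^\ell$'' to ``$\gamma\in\mbq^\times$''---is not automatic for arbitrary choices of the roots $c_i^{1/\ell}$ (e.g.\ $c_1=1$ with $c_1^{1/\ell}=\zeta_\ell$), but it holds once one fixes the real $\ell$-th roots (possible since $\ell$ is odd): then $\gamma\in\mbr$, and the unique real $\ell$-th root of $r^\ell$ is $r\in\mbq$. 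The paper's own proof tacitly makes the same identification when it writes $B/(\mbq^\times)^\ell\simeq(\mbz/\ell\mbz)^m/V_{\bf{c}}(\ell)$, so this is not a point where you diverge from the paper so much as a subtlety both treatments elide.
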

\begin{proof}
Let $B \subseteq \mbq^\times$ be the multiplicative subgroup generated by $(\mbq^\times)^\ell$ and $\{ c_i : \; 1 \leq i \leq m \}$.  In \cite[Theorem 8.1, p. 294--295]{lang} it is shown that 
\[
\gal(\mbq(\zeta_\ell, {\bf{c}}^{1/\ell})/\mbq(\zeta_\ell)) \simeq \frac{B}{(\mbq^\times)^\ell}.
\]
Noting that, under ${\bf{c}}^{{\bf{n}} \pmod{\ell}} \mapsto {\bf{n}} \pmod{\ell}$, one has
\[
\frac{B}{(\mbq^\times)^\ell} \simeq \frac{(\mbz/\ell\mbz)^m}{V_{{\bf{c}}}(\ell)} \simeq V_{{\bf{c}}}^\perp(\ell),
\]
one concludes that $\gal(\mbq(\zeta_\ell, {\bf{c}}^{1/\ell})/\mbq(\zeta_\ell)) \simeq V_{{\bf{c}}}^\perp(\ell)$, and the conclusion of the lemma follows.
\end{proof}

In our proof of Proposition \ref{secondsumboundprop}, it will become important to know that the subspace $V_{\bf{c}}^\perp(\ell) \subseteq (\mbz/\ell\mbz)^m$ is not too small, which motivates the following lemma.  
Let us define the $\mbz$-modules $M_{\bf{c}}$ and $M_{{\bf{c}},\ell}$ by
\begin{equation} \label{defofMc}
\begin{split}
M_{\bf{c}} &:= \left\{ {\bf{n}} \in \mbz^m : \; {\bf{c}}^{{\bf{n}}} \in \{ \pm 1 \} \right\}, \\
M_{{\bf{c}},\ell} &:= \left\{ {\bf{n}} \in \mbz^m : \; {\bf{c}}^{{\bf{n}}} \in ( \mbq^\times )^\ell \right\} \\
&= \left\{ {\bf{n}} \in \mbz^m : \; {\bf{n}} \pmod{\ell} \in V_{{\bf{c}}}(\ell) \right\}.
\end{split}
\end{equation}
Note that, if $\ell$ is an odd prime, then $M_{{\bf{c}}} \subseteq M_{{\bf{c}}, \ell}$.

\begin{lemma} \label{prekummerlargeequivlemma}
Let $m \geq 1$ and ${\bf{c}} \in (\mbq^\times)^m$, and let $\ell$ be an odd prime number.  
Then
\[
[ \mbq( \zeta_\ell, {\bf{c}}^{1/\ell} ) : \mbq(\zeta_\ell) ] < \ell^m 
\; \Longleftrightarrow \; 
M_{{\bf{c}},\ell} \neq \{ {\bf{0}} \}.
\]
\end{lemma}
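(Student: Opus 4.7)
The plan is to translate the degree inequality into a statement about Galois groups via the preceding lemma, then use standard duality on the $\mbf_\ell$-vector space $(\mbz/\ell\mbz)^m$, and finally reinterpret the resulting condition in terms of $M_{{\bf c},\ell}$.

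First, I would restrict the isomorphism $\gal(\mbq(\zeta_\ell, {\bf c}^{1/\ell})/\mbq) \simeq (\mbz/\ell\mbz)^\times \ltimes V_{\bf c}^\perp(\ell)$ supplied by the previous lemma to the subgroup fixing $\mbq(\zeta_\ell)$. The semidirect product structure (where $(\mbz/\ell\mbz)^\times$ corresponds to the action on roots of unity) identifies this subgroup with $V_{\bf c}^\perp(\ell)$, and therefore
\[
[\mbq(\zeta_\ell, {\bf c}^{1/\ell}):\mbq(\zeta_\ell)] \; = \; |V_{\bf c}^\perp(\ell)| \; = \; \ell^{\dim_{\mbf_\ell} V_{\bf c}^\perp(\ell)}.
\]
Thus the degree is strictly less than $\ell^m$ if and only if $\dim_{\mbf_\ell} V_{\bf c}^\perp(\ell) < m$.

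Next, I would observe that $V_{\bf c}^\perp(\ell)$ is, by its definition in \eqref{defofV}, the annihilator of $V_{\bf c}(\ell)$ under the standard non-degenerate symmetric bilinear pairing $\langle{\bf b},{\bf d}\rangle := \sum_i b_i d_i$ on the $\mbf_\ell$-vector space $(\mbz/\ell\mbz)^m$. By the usual rank-nullity identity for such pairings, $\dim V_{\bf c}(\ell) + \dim V_{\bf c}^\perp(\ell) = m$. Consequently, the condition $\dim V_{\bf c}^\perp(\ell) < m$ is equivalent to $V_{\bf c}(\ell) \neq \{{\bf 0}\}$, i.e., the existence of a nonzero ${\bf d} \in (\mbz/\ell\mbz)^m$ for which the relation \eqref{relationsforc} holds.

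Finally, I would unwind this condition at the level of $M_{{\bf c},\ell}$. The second formulation in \eqref{defofMc} exhibits $M_{{\bf c},\ell}$ as the preimage of $V_{\bf c}(\ell)$ under the reduction map $\mbz^m \twoheadrightarrow (\mbz/\ell\mbz)^m$, so $V_{\bf c}(\ell) \neq \{{\bf 0}\}$ is exactly the statement that $M_{{\bf c},\ell}$ contains an element outside $\ell\mbz^m$, which is the intended reading of $M_{{\bf c},\ell} \neq \{{\bf 0}\}$ (as a nontrivial submodule of $\mbz^m$ modulo the always-present lattice $\ell\mbz^m$). The main obstacle here is purely bookkeeping: verifying that the pairing on $(\mbz/\ell\mbz)^m$ really is non-degenerate (immediate since $\ell$ is prime so $\mbf_\ell$ is a field), and taking care with the abuse of notation in the symbol $M_{{\bf c},\ell} \neq \{{\bf 0}\}$. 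With these in hand the biconditional is established by chaining the three equivalences.
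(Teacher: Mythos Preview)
Your argument is correct and is precisely an expanded version of the paper's one-line proof, which reads: ``Using the previous lemma and \eqref{defofMc}, one sees that $[\mbq(\zeta_\ell,{\bf c}^{1/\ell}):\mbq(\zeta_\ell)]<\ell^m \Leftrightarrow V_{\bf c}(\ell)\neq\{{\bf 0}\} \Leftrightarrow M_{{\bf c},\ell}\neq\{{\bf 0}\}$.'' Your three steps (degree $=|V_{\bf c}^\perp(\ell)|$ via the preceding lemma, then rank--nullity on $(\mbz/\ell\mbz)^m$, then unwinding \eqref{defofMc}) are exactly what that sentence compresses, and your remark that $M_{{\bf c},\ell}$ literally always contains $\ell\mbz^m$---so that ``$\neq\{{\bf 0}\}$'' must be read as ``$\supsetneq\ell\mbz^m$''---is a fair observation about a harmless abuse of notation in the paper.
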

\begin{proof}
Using the previous lemma and \eqref{defofMc}, one sees that 
\[
[ \mbq( \zeta_\ell, {\bf{c}}^{1/\ell} ) : \mbq(\zeta_\ell) ] < \ell^m \; \Leftrightarrow \; V_{{\bf{c}}}(\ell) \neq \{ {\bf{0}} \} \; \Leftrightarrow \; M_{{\bf{c}},\ell} \neq \{ {\bf{0}} \}.
\]
\end{proof}

Now let $S$ be any set of odd primes.  One concludes from the definitions that 
\begin{equation} \label{obviousversion}
|S| = \infty \; \Longrightarrow \; M_{{\bf{c}}} = \bigcap_{\ell \in S} M_{{\bf{c}},\ell}.
\end{equation}
More is true.
\begin{lemma} \label{nonobviouslemma}
Let $S$ be a set of odd prime numbers.  If $|S| = \infty$ then
\[
M_{{\bf{c}}} \neq \{ {\bf{0}} \} \; \Longleftrightarrow \; \forall \ell \in S, \; M_{{\bf{c}},\ell} \neq \{ {\bf{0}} \}.
\]
\end{lemma}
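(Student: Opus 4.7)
The plan is to handle the two directions of the equivalence separately. The forward direction is essentially elementary: starting from a nonzero ${\bf n} \in M_{\bf c}$, I divide out by $\gcd(n_1,\ldots,n_m)$ to obtain a primitive vector ${\bf n}' \in M_{\bf c}$, using that any rational $d$-th root of $\pm 1$ is itself $\pm 1$ so that the condition ${\bf c}^{{\bf n}'} \in \{\pm 1\}$ survives the division. For any odd prime $\ell$, the identity $\pm 1 = (\pm 1)^\ell \in (\mbq^\times)^\ell$ places ${\bf n}'$ in $M_{{\bf c},\ell}$; primitivity of ${\bf n}'$ guarantees ${\bf n}' \notin \ell \mbz^m$, so the reduction of ${\bf n}'$ modulo $\ell$ is a nonzero class in $V_{\bf c}(\ell)$, witnessing $M_{{\bf c},\ell} \neq \{{\bf 0}\}$ for \emph{every} odd prime, in particular for every $\ell \in S$.

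For the reverse direction I argue by contradiction, assuming $M_{\bf c} = \{{\bf 0}\}$. The multiplicative map
\[
\phi \colon \mbz^m \longrightarrow H := \mbq^\times / \{\pm 1\}, \qquad \phi({\bf n}) := {\bf c}^{\bf n} \cdot \{\pm 1\},
\]
has kernel $M_{\bf c} = \{{\bf 0}\}$, so it is injective and its image $G := \phi(\mbz^m)$ is a free abelian subgroup of $H$ of rank $m$. Since $H$ is itself free abelian on the rational primes, $G$ is contained in its saturation $\tilde G := \{h \in H : h^n \in G \text{ for some } n \geq 1\}$, which is again free abelian of rank $m$, with $\tilde G/G$ a finite abelian group; set $N := |\tilde G/G|$.

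The crux of the argument is the claim that $V_{\bf c}(\ell) = \{{\bf 0}\}$ for every odd prime $\ell$ coprime to $N$. Indeed, if some ${\bf d} \in \mbz^m \setminus \ell \mbz^m$ satisfies ${\bf c}^{\bf d} = x^\ell$ with $x \in \mbq^\times$, then $h := x \cdot \{\pm 1\} \in H$ satisfies $\phi({\bf d}) = h^\ell \in G$, so $h \in \tilde G$ by the definition of saturation. In the finite group $\tilde G/G$ the class of $h$ has order dividing both $\ell$ and $N$, hence is trivial, so $h \in G$. Writing $h = \phi({\bf m})$ gives $\phi({\bf d}) = \phi(\ell {\bf m})$, and injectivity of $\phi$ forces ${\bf d} = \ell {\bf m} \in \ell \mbz^m$, a contradiction.

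To conclude, since $S$ is infinite but only finitely many primes divide $N$, some $\ell \in S$ satisfies $\gcd(\ell, N) = 1$; the claim then gives $V_{\bf c}(\ell) = \{{\bf 0}\}$, contradicting the hypothesis that $M_{{\bf c},\ell} \neq \{{\bf 0}\}$ for every $\ell \in S$. The main obstacle I expect is the structural observation underlying the claim, namely that the index $[\tilde G : G]$ is a single finite integer independent of $\ell$; once this uniformity is in hand, the infinitude of $S$ automatically supplies a prime at which the divisibility argument pinches.
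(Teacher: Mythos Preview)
Your argument is correct, but it takes a different route from the paper's.  For the converse, the paper writes down the integer matrix ${\bf E}_{\bf c}$ whose columns record the prime valuations of the $c_i$, so that $M_{\bf c}$ (respectively $M_{{\bf c},\ell}$) is the kernel of ${\bf E}_{\bf c}$ over $\mbz$ (respectively modulo $\ell$).  The paper then packages the $m\times m$ minors of ${\bf E}_{\bf c}$ into an integer vector $\gD_{\bf c}$ and observes that $M_{\bf c}\neq\{ {\bf 0}\}$ iff $\gD_{\bf c}={\bf 0}$, and $M_{{\bf c},\ell}\neq\{ {\bf 0}\}$ iff $\gD_{\bf c}\equiv{\bf 0}\pmod\ell$; an integer vector that vanishes modulo infinitely many primes is zero, which finishes the proof.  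Your approach instead works intrinsically in $H=\mbq^\times/\{\pm1\}$ and isolates the single integer $N=[\tilde G:G]$ as the obstruction.  Both arguments boil down to the same phenomenon---there is one integer, independent of $\ell$, whose prime divisors are the only possible ``bad'' $\ell$---but the paper's minors give this integer explicitly, which it then exploits in the subsequent remark to produce the effective constant $\gd({\bf c})$.  Your saturation index $N$ would serve equally well there (indeed $N$ divides the gcd of the minors, by Smith normal form), so nothing is lost; the paper's version is just more hands-on.

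On the forward direction, your primitivity manoeuvre is a bit more than the paper does (the paper simply cites $M_{\bf c}\subseteq M_{{\bf c},\ell}$), but it is harmless and in fact nails down that the resulting class in $V_{\bf c}(\ell)$ is genuinely nonzero.
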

\begin{proof}
The ``$\Longrightarrow$'' direction is clear from \eqref{obviousversion}.  For the converse, let 
\[
\begin{split}
R := &\{ \text{primes } p : \; v_p(c_i) \neq 0 \text{ for some } i \in \{ 1, 2, \dots, m \} \} \\
= &\{ p_1, p_2, \dots, p_r \},
\end{split}
\]
where $r := |R|$, and define the vectors ${\bf{e}}_j \in \mbz^R$ by 
\[
\begin{split}
c_j =: &\prod_{p \in R} p^{{\bf{e}}_j(p)} \\
= & \prod_{i = 1}^r p_i^{{\bf{e}}_{i,j}}.  
\end{split}
\]
Furthermore, consider the $r \times m$ integer matrix
\[
{\bf{E}}_{{\bf{c}}} :=
\begin{pmatrix}
{\bf{e}}_1 & {\bf{e}}_2 & \dots & {\bf{e}}_m
\end{pmatrix}
\in M_{r \times m}(\mbz)
\]
whose columns are the vectors ${\bf{e}}_j$.
Note that, for any vector ${\bf{n}} \in \mbz^m$, 
\[
\begin{split}
{\bf{c}}^{{\bf{n}}} \in \{ \pm 1 \} \quad &\Longleftrightarrow \quad  {\bf{E}}_{{\bf{c}}} \cdot {\bf{n}} = {\bf{0}}, \\
{\bf{c}}^{{\bf{n}}} \in (\mbq^\times)^\ell \quad &\Longleftrightarrow \quad  {\bf{E}}_{{\bf{c}}} \cdot {\bf{n}} \equiv {\bf{0}} \pmod{\ell},
\end{split}
\]
so in particular 
\[
\begin{split}
M_{{\bf{c}}} =& \{ {\bf{n}} \in \mbz^m : \; {\bf{E}}_{{\bf{c}}} \cdot {\bf{n}} = {\bf{0}} \}, \quad \text{ and} \\
M_{{\bf{c}},\ell} =& \{ {\bf{n}} \in \mbz^m : \; {\bf{E}}_{{\bf{c}}} \cdot {\bf{n}} \equiv {\bf{0}} \pmod{\ell} \}.
\end{split}
\]
Note that, if $r < m$ then necessarily $\ker {\bf{E}}_{{\bf{c}}}$ has dimension at least one, so $M_{{\bf{c}}} \neq \{ {\bf{0}} \}$ in this case.  In case $r \geq m$, let $\ds w := \binom{r}{m} \in \mbn$ and let $\gD_{{\bf{c}}} \in \mbz^w$ be the vector of determinants of all $m \times m$ sub-matrices of ${\bf{E}}_{{\bf{c}}}$.  One has
\begin{equation} \label{gDequivalences}
\begin{split}
M_{{\bf{c}}} \neq \{ {\bf{0}} \} \; &\Longleftrightarrow \; \gD_{\bf{c}} = {\bf{0}}, \\
M_{{\bf{c}},\ell} \neq \{ {\bf{0}} \} \; &\Longleftrightarrow \; \gD_{\bf{c}} \equiv {\bf{0}} \pmod{\ell}.
\end{split}
\end{equation}
Thus,
\[
\forall \ell \in S, \; M_{{\bf{c}},\ell} \neq \{ {\bf{0}} \} \; \Rightarrow \; \forall \ell \in S, \; \gD_{\bf{c}} \equiv {\bf{0}} \pmod{\ell} \; \Rightarrow \; \gD_{\bf{c}} = {\bf{0}} \; \Rightarrow \; M_{\bf{c}} \neq \{ {\bf{0}} \},
\]
proving the lemma.
\end{proof}

The next lemma will be useful for making sure that our Kummer extensions are not too small.
\begin{lemma} \label{kummerlargeequivlemma}
Let $m \geq 1$ and ${\bf{c}} \in (\mbq^\times)^m$, and let $S$ be an infinite set of odd prime numbers.
One has
\[
\forall \ell \in S, \; \left[ \mbq \left(\zeta_\ell, {\bf{c}}^{1/\ell} \right) : \mbq( \zeta_\ell) \right] < \ell^m \; \Longleftrightarrow \; 
M_{{\bf{c}}} \neq \{ {\bf{0}} \}.
\]
\end{lemma}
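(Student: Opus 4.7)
The statement follows almost immediately by chaining together Lemma \ref{prekummerlargeequivlemma} and Lemma \ref{nonobviouslemma}, so my plan is simply to observe that the two preceding lemmas give this equivalence with no additional work.

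More precisely, I would first apply Lemma \ref{prekummerlargeequivlemma} uniformly in $\ell \in S$. Since each $\ell \in S$ is an odd prime, that lemma converts the degree inequality into the statement
\[
\left[ \mbq \left(\zeta_\ell, {\bf{c}}^{1/\ell} \right) : \mbq( \zeta_\ell) \right] < \ell^m \;\Longleftrightarrow\; M_{{\bf{c}},\ell} \neq \{ {\bf{0}} \}.
\]
Quantifying over $\ell \in S$ gives the reformulation
\[
\forall \ell \in S, \; \left[ \mbq \left(\zeta_\ell, {\bf{c}}^{1/\ell} \right) : \mbq( \zeta_\ell) \right] < \ell^m \;\Longleftrightarrow\; \forall \ell \in S, \; M_{{\bf{c}},\ell} \neq \{ {\bf{0}} \}.
\]

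Next I would invoke Lemma \ref{nonobviouslemma}, which (since $S$ is infinite) tells us exactly that
\[
\forall \ell \in S, \; M_{{\bf{c}},\ell} \neq \{ {\bf{0}} \} \;\Longleftrightarrow\; M_{{\bf{c}}} \neq \{ {\bf{0}} \}.
\]
Composing the two equivalences yields the desired statement.

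There is no real obstacle here; the substantive work was already carried out in Lemma \ref{nonobviouslemma}, where one passed from the vanishing of the vector of $m \times m$ minors $\Delta_{\bf{c}}$ modulo every $\ell$ in an infinite set to its vanishing in $\mbz$ (via the observation that a fixed integer divisible by infinitely many primes must be zero). This present lemma is just the clean packaging of Lemmas \ref{prekummerlargeequivlemma} and \ref{nonobviouslemma} in the language of Kummer degrees that will be used in the proof of Proposition \ref{secondsumboundprop}.
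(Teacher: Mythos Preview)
Your proposal is correct and matches the paper's approach exactly: the paper's proof is the single line ``Apply Lemmas \ref{prekummerlargeequivlemma} and \ref{nonobviouslemma},'' and your argument spells out precisely this chaining of equivalences.
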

\begin{proof}
Apply Lemmas \ref{prekummerlargeequivlemma} and \ref{nonobviouslemma}.
\end{proof}

\begin{remark} \label{kummerboundremark}
Suppose that ${\bf{c}} \in (\mbq^\times)^m$ and $M_{{\bf{c}}} = \{ {\bf{0}} \}$.  By \eqref{gDequivalences}, we have that $\gD_{\bf{c}} \neq {\bf{0}}$.  Thus, writing $\gD_{{\bf{c}}} =: (\gl_1, \gl_2, \dots, \gl_w)$, we may define $\gd({\bf{c}}) := 2 \gcd( \gl_1, \gl_2, \dots, \gl_w)$. 
The proof of Lemma \ref{nonobviouslemma} shows that
\[
\ell \nmid \gd({\bf{c}}) \; \Longrightarrow \; \left[ \mbq \left(\zeta_\ell, {\bf{c}}^{1/\ell} \right) : \mbq(\zeta_\ell) \right] = \ell^m.
\]
\end{remark}

The next corollary follows from applying Lemma \ref{kummerlargeequivlemma} with ${\bf{c}} = (n_1, n_2, f(n_1)) \in (\mbq^\times)^3$ with $n_1, n_2 \in \mbn$ chosen in accordance with Corollary \ref{threenumberslemma}.  Let us make the following definitions, for ${\bf{n}} = (n_1, n_2) \in \mbn^2$:
\begin{equation*} 
\begin{split}
{\bf{c}}_{f, {\bf{n}}} :=& (n_1, n_2, f(n_1)) \in (\mbq^\times)^3 \\
\mc{N}_f :=& \{ {\bf{n}} \in \mbn^2 : \; M_{{\bf{c}}_{f,{\bf{n}}}} = \{ {\bf{0}} \} \}.
\end{split}
\end{equation*}
If ${\bf{n}} \in \mc{N}_f$, then the vector $\gD_{{\bf{c}}_{f, {\bf{n}}}} =: (\gl_1^{(f,{\bf{n}})}, \gl_2^{(f,{\bf{n}})}, \dots, \gl_w^{(f,{\bf{n}})}) \in \mbz^w$ appearing in the proof of Lemma \ref{nonobviouslemma} is well-defined and non-zero.  We then set
\begin{equation*} 
\gd_{f, {\bf{n}}} := 2 \gcd(\gl_1^{(f,{\bf{n}})}, \gl_2^{(f,{\bf{n}})}, \dots, \gl_w^{(f,{\bf{n}})}).
\end{equation*}

\begin{cor} \label{lowerdegreeboundcor}
Suppose that $f : \mbq \longrightarrow \mbq$ satisfies $| S_f | = \infty$.  Then either $f$ is a global power map, or $\mc{N}_f \neq \emptyset$.  Furthermore, for any ${\bf{n}} = (n_1, n_2) \in \mc{N}_f$ and for any odd prime $\ell$ one has
\begin{equation*} 
\ell \nmid \gd_{f,{\bf{n}}} \; \Longrightarrow \; \left[ \mbq \left( \zeta_\ell, n_1^{1/\ell}, n_2^{1/\ell}, f(n_1)^{1/\ell}, f(n_2)^{1/\ell} \right) : \mbq(\zeta_\ell) \right] \geq \ell^3
\end{equation*}
\end{cor}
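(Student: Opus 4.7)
The plan is to split the proof into two parts. For the first assertion, assuming $f$ is not a global power map (and $|S_f| = \infty$), I would apply Corollary~\ref{threenumberslemma} with $L = 1$ to produce a positive square-free integer $n_1$ with $f(n_1) \notin n_1^\mbz \cup -n_1^\mbz$. By Lemma~\ref{completelymultiplicativelemma}, $f$ is completely multiplicative, so $f(1) = 1 \in 1^\mbz$, which forces $n_1 \geq 2$. I would then let $R$ denote the finite set of primes dividing $n_1 \cdot \num(f(n_1)) \cdot \den(f(n_1))$ and pick any prime $n_2 \notin R$. The candidate pair is $(n_1, n_2) \in \mbn^2$, and it remains to verify $(n_1, n_2) \in \mc{N}_f$, i.e.\ that $M_{{\bf{c}}_{f,{\bf{n}}}} = \{ {\bf{0}} \}$.

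To verify this, suppose $(a, b, c) \in \mbz^3$ satisfies $n_1^a \cdot n_2^b \cdot f(n_1)^c = \pm 1$. Taking $v_{n_2}$ of both sides kills the first and third factors by the choice of $n_2$, forcing $b = 0$. So $f(n_1)^c = \pm n_1^{-a}$. If $c = 0$, then $n_1^a = \pm 1$ forces $a = 0$ (since $n_1 \geq 2$). If $c \neq 0$, comparing $v_p$ prime-by-prime gives $v_p(f(n_1)) = 0$ for every $p \nmid n_1$, and $v_p(f(n_1)) = -a/c$ for each $p \mid n_1$ (here I use square-freeness, so that $v_p(n_1) = 1$). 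Consequently $f(n_1) = \pm \prod_{p \mid n_1} p^{-a/c}$, and the rationality of $f(n_1)$ together with square-freeness of $n_1 > 1$ forces $-a/c \in \mbz$. Writing $k := -a/c$, one obtains $f(n_1) = \pm n_1^k \in n_1^\mbz \cup -n_1^\mbz$, contradicting the choice of $n_1$. Hence $\mc{N}_f \neq \emptyset$.

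For the degree bound, fix any $(n_1, n_2) \in \mc{N}_f$ and set ${\bf{c}} := {\bf{c}}_{f,{\bf{n}}} = (n_1, n_2, f(n_1)) \in (\mbq^\times)^3$. Then $M_{\bf{c}} = \{ {\bf{0}} \}$ by hypothesis, and $\gd({\bf{c}}) = \gd_{f,{\bf{n}}}$ by the defining construction in Remark~\ref{kummerboundremark}. That remark applied to the $3$-tuple ${\bf{c}}$ yields, for every odd prime $\ell$ with $\ell \nmid \gd_{f,{\bf{n}}}$,
\[
\left[ \mbq \left( \zeta_\ell, n_1^{1/\ell}, n_2^{1/\ell}, f(n_1)^{1/\ell} \right) : \mbq(\zeta_\ell) \right] = \ell^3.
\]
Since the larger Kummer extension $\mbq ( \zeta_\ell, n_1^{1/\ell}, n_2^{1/\ell}, f(n_1)^{1/\ell}, f(n_2)^{1/\ell} )$ contains this subfield, the asserted lower bound follows. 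The only step that requires genuine work is the prime-by-prime valuation argument of the second paragraph; the rest is bookkeeping built on Corollary~\ref{threenumberslemma} and Remark~\ref{kummerboundremark}.
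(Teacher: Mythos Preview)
Your proof is correct and follows essentially the same route as the paper: apply Corollary~\ref{threenumberslemma} to obtain a square-free $n_1$ with $f(n_1)\notin n_1^{\mbz}\cup -n_1^{\mbz}$, adjoin a prime $n_2$ supported away from $n_1$ and $f(n_1)$, and then invoke Remark~\ref{kummerboundremark} for the three-variable Kummer extension. The only difference is cosmetic: the paper packages the verification that $M_{{\bf{c}}_{f,{\bf{n}}}}=\{{\bf{0}}\}$ via Lemma~\ref{kummerlargeequivlemma} (reducing to a two-variable relation $n_1^{c}f(n_1)^{d}=\pm 1$ with $\gcd(c,d)=1$ and observing that square-freeness forces $d=\pm 1$), whereas you carry out the same valuation computation directly on the three-variable relation.
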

\begin{proof}
Let $n \in \mbn$.  By Lemma \ref{kummerlargeequivlemma}, $\left[ \mbq \left( \zeta_\ell, n^{1/\ell}, f(n)^{1/\ell} \right) : \mbq(\zeta_\ell) \right] < \ell^2$ for infinitely many primes $\ell$ if and only if 
\begin{equation} \label{relationwithnandfofn}
n^c f(n)^d \in \{ \pm 1 \} \; \text{ for some } \; (c,d) \in \mbz^2 - \{ (0,0) \},
\end{equation}
and we may as well take $c$ and $d$ to be relatively prime.  If $n$ is further assumed to be square-free and greater than $1$, then one finds that $d = 1$ in \eqref{relationwithnandfofn}, and so this happens if and only if
\begin{equation} \label{equivrelationwithnandfofn}
f(n) \in n^\mbz \cup -n^\mbz.
\end{equation}
By Corollary \ref{threenumberslemma}, one may find a square-free number $n$ for which \eqref{equivrelationwithnandfofn} does not happen.  Putting $n_1 := n$ and taking $n_2 = p$ to be any prime for which $v_p(n_1) = v_p(f(n_1)) = 0$, we see that $M_{{\bf{c}}_{f, {\bf{n}}}} = \{ {\bf{0}} \}$.  Applying Remark \ref{kummerboundremark}, we see that, for $\ell \nmid \gd_{f, {\bf{n}}}$, one has$[ \mbq(\zeta_\ell, {\bf{c}}_{f, {\bf{n}}}^{1/\ell}) : \mbq(\zeta_\ell) ] = \ell^3$, which proves the corollary.
\end{proof}

The next lemma deals with the absolute discriminant of the field 
\begin{equation} \label{Kkummergeneral}
\begin{split}
K = \mbq(\zeta_\ell, {\bf{c}}^{1/\ell}) :=& \mbq(\zeta_\ell, c_1^{1/\ell}, c_2^{1/\ell}, \dots, c_m^{1/\ell}) \\
=& \mbq(\zeta_\ell, c_1^{1/\ell}, c_2^{1/\ell}, \dots, c_d^{1/\ell}),
\end{split}
\end{equation}
where $0 \leq d = d_{{\bf{c}}} \leq m$ is as in \eqref{defofd}.
Its proof utilizes the following classical formula for relative discriminants.  
\begin{lemma} \label{relativediscriminantformulalemma}
Let $F \subseteq L \subseteq K$ be a tower of number fields, let $\gD_{K/L} \subseteq \mc{O}_L$, $\gD_{K/F} \subseteq \mc{O}_F$, and $\gD_{L/F} \subseteq \mc{O}_F$ be the relative discriminants and let $\mf{N}_{L/F} : L^\times \longrightarrow F^\times$ the usual norm map.  Then one has
\begin{equation} \label{relativediscriminantformula}
\gD_{K/F} = \mf{N}_{L/F}(\gD_{K/L}) \gD_{L/F}^{[ K : L ]}.
\end{equation}
\end{lemma}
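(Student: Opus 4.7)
The plan is to derive this classical formula from two standard ingredients in algebraic number theory: the transitivity of the different in a tower, and the norm description of the discriminant. Let $\mf{d}_{K/L}$, $\mf{d}_{K/F}$, $\mf{d}_{L/F}$ denote the respective relative differents (viewed as integral ideals of $\mc{O}_K$, $\mc{O}_K$, $\mc{O}_L$ respectively). The first ingredient is the transitivity identity
\[
\mf{d}_{K/F} \; = \; \mf{d}_{K/L} \cdot \left( \mf{d}_{L/F} \mc{O}_K \right)
\]
of ideals in $\mc{O}_K$; the second is the norm-discriminant relation $\gD_{M/N} = \mf{N}_{M/N}(\mf{d}_{M/N})$, valid for any tower $N \subseteq M$ of number fields.

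Applying the norm $\mf{N}_{K/F}$ to the transitivity identity and using multiplicativity of the ideal norm, I would obtain
\[
\gD_{K/F} \; = \; \mf{N}_{K/F}(\mf{d}_{K/L}) \cdot \mf{N}_{K/F}\!\left( \mf{d}_{L/F} \mc{O}_K \right).
\]
The first factor I would rewrite using transitivity of the norm, $\mf{N}_{K/F} = \mf{N}_{L/F} \circ \mf{N}_{K/L}$, together with the norm-discriminant relation applied to the top of the tower, giving $\mf{N}_{L/F}(\gD_{K/L})$. The second factor I would handle with the standard identity $\mf{N}_{K/F}(\mf{a} \mc{O}_K) = \mf{N}_{L/F}(\mf{a})^{[K:L]}$ for any ideal $\mf{a} \subseteq \mc{O}_L$, which converts it into $\gD_{L/F}^{[K:L]}$. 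Combining the two factors gives exactly \eqref{relativediscriminantformula}.

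The main (and only) obstacle—if one insists on a self-contained proof rather than quoting these ingredients—would be the transitivity of the different. This is a local calculation at each prime of $\mc{O}_K$, typically carried out using the characterization of the inverse different $\mf{d}_{K/L}^{-1}$ as the codifferent $\{ x \in K : \tr_{K/L}(x \mc{O}_K) \subseteq \mc{O}_L \}$ and the compatibility of traces $\tr_{K/F} = \tr_{L/F} \circ \tr_{K/L}$ in a tower. Since this material is thoroughly documented in standard references (for instance, Serre's \emph{Local Fields}, Chapter III, or Neukirch's \emph{Algebraic Number Theory}, Chapter III), I would simply cite the transitivity of the different rather than reproduce its proof, and present the argument above as the derivation of the relative discriminant formula from it.
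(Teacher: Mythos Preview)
Your argument is correct and is the standard textbook derivation of the tower formula for relative discriminants. The paper does not actually prove this lemma at all; it simply cites \cite[p.~126]{frolichtaylor}, so your sketch via transitivity of the different and the norm--discriminant relation is in fact more detailed than what the paper provides.
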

\begin{proof}
See for instance \cite[p. 126]{frolichtaylor}.
\end{proof}

\begin{lemma} \label{kummerdisclemma}
Let $K$ be as in \eqref{Kkummergeneral}.  Then the absolute discriminant $d_K$ divides 
\[
\left( \prod_{i=1}^d \num(c_i) \den(c_i) \right)^{(\ell - 1)^2\ell^{d-1}} \ell^{(d+1)(\ell-1)^2\ell^{d-1}}.
\]
\end{lemma}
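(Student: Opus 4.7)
The plan is to prove the divisibility by induction on $d$, using the Kummer tower
\[
\mathbb{Q} \subseteq K_0 := \mathbb{Q}(\zeta_\ell) \subseteq K_1 \subseteq \cdots \subseteq K_d = K, \qquad K_j := K_{j-1}(c_j^{1/\ell}).
\]
By the minimality of $d = d_{\mathbf{c}}$ established in the preceding lemma, each step has $[K_j : K_{j-1}] = \ell$, while of course $[K_0 : \mathbb{Q}] = \ell - 1$, so that $[K_j : \mathbb{Q}] = (\ell-1)\ell^j$.

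At each step $j \geq 1$ I would first integralize the generator. Writing $c_j = a_j/b_j$ in lowest terms and setting $\beta_j := b_j \cdot c_j^{1/\ell}$, one has $\beta_j^\ell = a_j b_j^{\ell-1} \in \mathbb{Z}$, so $\beta_j \in \mathcal{O}_{K_j}$ and $K_j = K_{j-1}(\beta_j)$.  Since $\mathcal{O}_{K_{j-1}}[\beta_j] \subseteq \mathcal{O}_{K_j}$, the relative discriminant $\mathfrak{D}_{K_j/K_{j-1}}$ divides the discriminant of the minimal polynomial of $\beta_j$ over $K_{j-1}$, which in turn divides $\mathrm{disc}(X^\ell - a_j b_j^{\ell-1}) = \pm \ell^\ell (a_j b_j^{\ell-1})^{\ell-1}$.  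Hence
\[
\mathfrak{D}_{K_j/K_{j-1}} \;\Bigm|\; \ell^\ell (a_j b_j^{\ell-1})^{\ell-1} \mathcal{O}_{K_{j-1}}.
\]

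Now I would iterate the tower formula \eqref{relativediscriminantformula}.  Taking absolute norms yields
\[
|d_{K_j}| \;=\; \bigl|\mathfrak{N}_{K_{j-1}/\mathbb{Q}}(\mathfrak{D}_{K_j/K_{j-1}})\bigr| \cdot |d_{K_{j-1}}|^{\ell};
\]
the absolute norm of a rational ideal $n\mathcal{O}_{K_{j-1}}$ is $n^{[K_{j-1}:\mathbb{Q}]} = n^{(\ell-1)\ell^{j-1}}$, and for the base case Lemma \ref{cyclotomicdiscboundlemma} gives that $|d_{K_0}|$ divides $\ell^{\ell-1}$.  Unrolling the recursion then collects the contributions $\ell^{\ell(\ell-1)\ell^{j-1}}(a_j b_j^{\ell-1})^{(\ell-1)^2 \ell^{j-1}}$ across $j = 1, \ldots, d$, each subsequently raised to the power $\ell^{d-j}$ by the remaining iterations, together with $|d_{K_0}|^{\ell^d}$.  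Finally, using $|a_j| = \num(c_j)$ and $|b_j| = \den(c_j)$, both bounded by $\num(c_j)\den(c_j)$, one regroups the factors into the desired shape.

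The main obstacle is thus a careful accounting of exponents in the iterated tower-formula calculation, so that the contributions of $\ell$ and of each $\num(c_j)\den(c_j)$ collapse into the precise form $(\ell-1)^2 \ell^{d-1}$ and $(d+1)(\ell-1)^2\ell^{d-1}$ respectively.  Since the conclusion asserts only divisibility (not an identity), the rough elementary estimates for $|a_j b_j^{\ell-1}|$ in terms of $\num(c_j)\den(c_j)$ can be absorbed into the claimed exponents, and no refined analysis of ramification at $\ell$ is required.
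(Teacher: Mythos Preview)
Your architecture---induct on $d$, bound each relative discriminant via an integral generator, and iterate the tower formula \eqref{relativediscriminantformula}---is exactly the paper's.  The gap is in your final ``absorption'' step.  Your single generator $\beta_j$ satisfies $\beta_j^\ell = a_j b_j^{\ell-1}$, so the bound $\mathrm{disc}(X^\ell - a_j b_j^{\ell-1}) = \pm \ell^\ell a_j^{\ell-1} b_j^{(\ell-1)^2}$ carries $b_j$ to the exponent $(\ell-1)^2$.  After taking the norm from $K_{j-1}$ and iterating, this places an exponent of $(\ell-1)^3\ell^{d-1}$ on $\den(c_j)$, which is \emph{larger} than the $(\ell-1)^2\ell^{d-1}$ asserted in the lemma.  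Your claim that such overshoots ``can be absorbed into the claimed exponents'' has the inequality the wrong way round: to prove that $d_K$ divides $X$ you must show each prime-power in $d_K$ is at most the corresponding one in $X$; proving that $d_K$ divides some $Y$ with \emph{larger} exponents than $X$ is a strictly weaker statement and does not imply $d_K \mid X$.

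The paper closes this gap by introducing a \emph{second} integral generator $\gamma_j := a_j c_j^{-1/\ell}$, which satisfies $\gamma_j^\ell = a_j^{\ell-1} b_j$ and whose discriminant swaps the roles of $a_j$ and $b_j$.  Because $\gcd(a_j,b_j)=1$, the ideal generated by the two element-discriminants in $\mathcal{O}_{K_{j-1}}$ contains $(a_j b_j)^{\ell-1}$ times the common $\ell$-power, and this symmetric bound on $\Delta_{K_j/K_{j-1}}$ is precisely what is needed to obtain the exponent $(\ell-1)^2\ell^{d-1}$ on $\num(c_j)\den(c_j)$ after iteration.
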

\begin{proof}
We induct on $d$.   We will apply Lemma \ref{relativediscriminantformulalemma} with $F = \mbq$,
\[
L = 
\begin{cases}
\mbq(\zeta_\ell) & \text{ if } d = 1 \\
\mbq(\zeta_\ell, c_1^{1/\ell}, c_2^{1/\ell}, \dots, c_{d-1}^{1/\ell}) & \text{ if } d > 1
\end{cases}
\]
and $K = L(c_d^{1/\ell})$.  First note that, for any $\ga \in \mc{O}_K$ satisfying $K = L(\ga)$, one has
\begin{equation} \label{discriminantcontainment}
\mf{D}(\ga) \cdot \mc{O}_L \subseteq \gD_{K/L},
\end{equation}
where $\mf{D}(\ga)$ is the square of the determinant of the $\ell \times \ell$ matrix whose $(i,j)$-th entry is $\gs_i(\ga^j)$, where $\{ \gs_0, \gs_1, \dots, \gs_{\ell-1} \}$ is the set of embeddings of $K$ into $\mbc$ fixing $L$ point-wise and $0 \leq j \leq \ell-1$.  Let us abbreviate $c := c_d$.  Writing $c = a/b$ in lowest terms, and applying \eqref{discriminantcontainment} with $\ga = (a b^{\ell - 1})^{1/\ell} = c^{1/\ell} b$ and again with $\ga = (a^{\ell - 1} b)^{1/\ell} = c^{-1/\ell} a$, we conclude that
\[
\mf{D}\left( (ab^{\ell - 1})^{1/\ell} \right) \cdot \mc{O}_L + \mf{D}\left( (a^{\ell - 1}b)^{1/\ell} \right) \cdot \mc{O}_L \subseteq \gD_{K/L}.
\]
Now for any integer $n$, one computes that $\mf{D}(n^{1/\ell}) = n^{\ell-1} \ell^{\ell-2}$.  Using this, the greatest common divisor on the left-hand side is readily calculated, showing that
\[
(\num(c) \den(c))^{\ell-1} \ell^{\ell-2} \mc{O}_L \subseteq \gD_{K/L}.
\]
Inserting this information into \eqref{relativediscriminantformula}, we find that
\[
\gD_{K/\mbq} \quad \text{ divides } \quad (\num(c) \den(c))^{(\ell-1)^2 \ell^{d-1}} \ell^{(\ell-2)(\ell-1)\ell^{d-1}} \gD_{L/\mbq}^\ell.
\]
Applying the induction hypothesis (or the formula $d_{\mbq(\zeta_\ell)} = \pm \ell^{\ell-2}$ of Lemma \ref{cyclotomicdiscboundlemma} in the base case), the conclusion of Lemma \ref{kummerdisclemma} now follows.
\end{proof}

In particular, since $\ds |d_K | \leq \left( \prod_{i=1}^d \num(c_i) \den(c_i) \right)^{(\ell-1)^2\ell^{d-1}} \ell^{(d+1)(\ell-1)^2\ell^{d-1}}$, we obtain the following corollary.  Let us put 
\begin{equation*} 
b_{f, {\bf{n}}} :=  \left| \prod_{i=1}^2 n_i \num(f(n_i)) \den(f(n_i)) \right|.
\end{equation*}

\begin{cor} \label{kummerdiscboundcor}
Suppose $f : \mbq \longrightarrow \mbq$ is any function and let $K = \mbq \left( \zeta_\ell, n_1^{1/\ell}, n_2^{1/\ell}, f(n_1)^{1/\ell}, f(n_2)^{1/\ell} \right)$.  Then for any prime $\ell$ satisfying $[K : \mbq(\zeta_\ell) ] \geq \ell^3$ and $\log \ell \geq b_{f, {\bf{n}}}$, one has
\[
\max \left\{ \log | d_K |, |d_K|^{1/[K : \mbq ]} \right\} \; \leq \; 6 \ell^5 \log \ell.
\]
\end{cor}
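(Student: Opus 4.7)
The plan is to unwind Lemma \ref{kummerdisclemma} for the specific vector ${\bf{c}} = (n_1, n_2, f(n_1), f(n_2)) \in (\mbq^\times)^4$. First, the hypothesis $[K:\mbq(\zeta_\ell)] \geq \ell^3$, combined with the Galois-group computation preceding Lemma \ref{kummerdisclemma} (which identifies this degree with $\ell^{d_{\bf{c}}}$), forces $d := d_{\bf{c}} \in \{3,4\}$. Furthermore, the (possibly relabelled) retained generators $c_1,\dots,c_d$ form a sub-collection of the four original generators, so $\prod_{i=1}^d \num(c_i)\den(c_i) \leq b_{f,{\bf{n}}}$. Substituting into Lemma \ref{kummerdisclemma} then yields
\[
|d_K| \; \leq \; b_{f,{\bf{n}}}^{(\ell-1)^2\ell^{d-1}} \cdot \ell^{(d+1)(\ell-1)^2 \ell^{d-1}}.
\]

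To bound $|d_K|^{1/[K:\mbq]}$, I would use $[K:\mbq] = (\ell-1)\ell^d$ to take the $[K:\mbq]$-th root term by term, obtaining $|d_K|^{1/[K:\mbq]} \leq b_{f,{\bf{n}}}^{(\ell-1)/\ell} \cdot \ell^{(d+1)(\ell-1)/\ell} \leq b_{f,{\bf{n}}} \cdot \ell^5$, which under the hypothesis $\log \ell \geq b_{f,{\bf{n}}}$ becomes $\leq \ell^5 \log \ell$. For $\log|d_K|$, taking logarithms of the divisibility bound and using $d \leq 4$ produces $\log|d_K| \leq \ell^5 \log b_{f,{\bf{n}}} + 5\ell^5 \log \ell$; since $b_{f,{\bf{n}}} \leq \log \ell$ forces $\log b_{f,{\bf{n}}} \leq \log \ell$, this collapses to $6\ell^5 \log \ell$. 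Taking the maximum of the two bounds then gives the claimed inequality.

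No substantive obstacle is expected here: the argument is essentially a careful simplification of the exponents appearing in Lemma \ref{kummerdisclemma}. The one minor subtlety is the particular form of the hypothesis $\log \ell \geq b_{f,{\bf{n}}}$, rather than the more natural-looking $\ell \geq b_{f,{\bf{n}}}$: it is calibrated precisely so that the factor $b_{f,{\bf{n}}}^{(\ell-1)/\ell}$ surviving in the $[K:\mbq]$-th root of the discriminant bound can be absorbed into a single $\log \ell$ factor.
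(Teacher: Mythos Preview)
Your proof is correct and is exactly the computation the paper has in mind: the corollary is stated immediately after the observation that $|d_K| \leq \left(\prod_{i=1}^d \num(c_i)\den(c_i)\right)^{(\ell-1)^2\ell^{d-1}} \ell^{(d+1)(\ell-1)^2\ell^{d-1}}$, and your argument simply substitutes $d \leq 4$, $[K:\mbq] = (\ell-1)\ell^d$, and $\prod_{i=1}^d \num(c_i)\den(c_i) \leq b_{f,{\bf{n}}} \leq \log\ell$ into this bound and simplifies.
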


\subsection{The Frobenius automorphism in Kummer extensions}
We now turn our consideration to the Frobenius automorphism $\frob_\mf{P}$ for a prime ideal $\mf{P} \subseteq \mc{O}_K$ lying over $p\mbz$, where $K = \mbq(\zeta_\ell, {\bf{c}}^{1/\ell})$ and 
$p \equiv 1 \pmod{\ell}$.

We begin by describing the situation when $m = d = 1$, i.e. (dropping subscripts) we have
\[
K = K_c := \mbq(\zeta_\ell, c^{1/\ell}) \neq \mbq(\zeta_\ell) \quad\quad (c \in \mbq^\times)
\]
and
\begin{equation} \label{kummerisomorphism}
\begin{split}
\gal(K_c/\mbq) &\simeq (\mbz/\ell\mbz)^\times \ltimes \mbz/\ell\mbz \\
\begin{pmatrix}
\zeta_\ell & \mapsto & \zeta_\ell^a \\
c^{1/\ell} & \mapsto & c^{1/\ell} \cdot \zeta_\ell^b
\end{pmatrix}
&\mapsto
(a, b),
\end{split}
\end{equation}
The minimal polynomials over $\mbq$ of $\zeta_\ell$ and $c^{1/\ell}$, together with there factorizations over $\ol{\mbq}$, are given respectively as follows:
\begin{equation*}
\begin{split}
\Phi_\ell(t) := \frac{t^\ell - 1}{t - 1} &= \prod_{i \in (\mbz/\ell\mbz)^\times} (t - \zeta_\ell^i), \\
t^\ell - c &= \prod_{i \in \mbz/\ell\mbz} (t - \zeta_\ell^i \cdot c^{1/\ell}).
\end{split}
\end{equation*}

In our present discussion, we will adopt the standing assumptions that 
\begin{equation} \label{assumptionsonp}
p \equiv 1 \pmod{\ell} \quad \text{ and } \quad \ord_p(c) = 0.
\end{equation}
By Lemmas \ref{cyclotomicfroblemma} and \ref{kummerdisclemma}, these conditions imply that
\[
\text{$p$ splits completely in $\mbq(\zeta_\ell)$ } \; \text{ and } \; \text{ $p$ is unramified in $K_c$.}
\]
Consider the subgroup $\mu_\ell \subseteq \ol{\mbf_p}^\times$ of $\ell$-th roots of unity.
Since $p \equiv 1 \pmod{\ell}$, one can find an element $z \in \mbz$ whose reduction $\hat{z}$ modulo $p$ generates $\mu_\ell$, i.e. we have
\begin{equation} \label{defofz}
\langle \hat{z} \rangle = \mu_\ell \subseteq \mbf_p^\times,
\end{equation}
and the reductions modulo $p$ of the above minimal polynomials factorize over $\ol{\mbf_p}$ as
\[
\begin{split}
\Phi_\ell(t) &\equiv \prod_{i \in (\mbz/\ell\mbz)^\times} (t - \hat{z}^i) \pmod{p}, \\
t^\ell - c &\equiv \prod_{i \in \mbz/\ell\mbz} (t - \hat{z}^i \theta_c) \pmod{p},
\end{split}
\]
for some $\theta_c \in \ol{\mbf_p}^\times / \mu_\ell$.  Furthermore, one has the prime factorization
\begin{equation*} 
p \mc{O}_{\mbq(\zeta_\ell)} = \prod_{i \in (\mbz/\ell\mbz)^\times} \mf{p}_{z,i}, \quad\quad \left( \mf{p}_{z,i} := p \mc{O}_{\mbq(\zeta_\ell)} + (\zeta_\ell - z^{i^*}) \mc{O}_{\mbq(\zeta_\ell)} \right),
\end{equation*}
where $i^*$ denotes an integer satisfying $i^* i \equiv 1 \pmod{p}$.
Note that, by our choice of indexing, we have
\begin{equation} \label{transformrule}
\forall j \in (\mbz/\ell\mbz)^\times, \quad\quad \mf{p}_{z,i} = \mf{p}_{z^j,ij}.
\end{equation}
What about the splitting type of such a prime ideal $\mf{p}_{z,i}$ in $K_c$?  Since $K_c$ has prime degree $\ell$ over $\mbq(\zeta_\ell)$ and by \eqref{assumptionsonp}, each $\mf{p}_{z,i}$ either splits completely or remains inert in $K_c$.  Furthermore, since $K_c$ is Galois over $\mbq$, the splitting type of each $\mf{p}_{z,i}$ is the same.  Under the assumptions \eqref{assumptionsonp}, one has
\begin{equation} \label{splittingcompletelycriterionforc}
\begin{split}
\text{$\mf{p}_{z,i}$ splits completely in $K_c$ }
\; &\Longleftrightarrow \;
c \pmod{p} \in (\mbf_p^\times)^\ell \\
\; &\Longleftrightarrow \;
\theta_c \in \mbf_p^\times / \mu_\ell
\end{split}
\end{equation}
If this is the case, we may allow $z$ in \eqref{defofz} to be an arbitrary generator of $\mu_\ell \subset \mbf_p^\times$ and note also that, under the isomorphism \eqref{kummerisomorphism},
\[
\text{$\mf{p}_{z,i}$ splits completely in $K_c$ }
\; \Longleftrightarrow \; 
\frob_\mf{P} = (1, 0),
\]
for any prime ideal $\mf{P} \subseteq \mc{O}_{K_c}$ lying over $\mf{p}_{z,i}$.  

In case $\mf{p}_{z,i}$ does not split completely in $K_c$, the finite field $\mbf_p[\theta_c]$ has degree $\ell$ over $\mbf_p$, and we normalize our choice of $z = z_c \in \mbz$ so that
\begin{equation} \label{normalizationofzc}
\hat{z}_c := \frac{\theta_c^p}{\theta_c} \in \mbf_p^\times
\end{equation}
(Note that $\hat{z}_c \in \mbf_p^\times$ is independent of the choice of $\theta_c \in \ol{\mbf_p}^\times / \mu_\ell$).  In this case, putting
\begin{equation} \label{defofmfPi}
\mf{P}_{z_c,i} := \mf{p}_{z_c,i} \mc{O}_{K_c} = p \mc{O}_{K_c} + (\zeta_\ell - z_c^{i^*}) \mc{O}_{K_c},
\end{equation}
the ideal $\mf{P}_{z_c,i}$ is prime and we have a prime factorization 
\[
p \mc{O}_{K_c} = \prod_{i \in (\mbz/\ell\mbz)^\times} \mf{P}_{z_c,i}.  
\]
The following lemma characterizes the Frobenius automorphism $\frob_{\mf{P}_{z_c,i}}$.
\begin{lemma} \label{frobinkummerlemma}
Suppose $c \in \mbq^\times$ satisfies $K_c := \mbq(\zeta_\ell, c^{1/\ell}) \neq \mbq(\zeta_\ell)$.  Furthermore, let $p$ be a prime number satisfying $p \equiv 1 \pmod{\ell}$ and $\ord_p(c) = 0$.  Then $p$ is unramified in $K_c$ and, with notation as above, under the isomorphism $\gal(K_c/\mbq) \simeq (\mbz/\ell\mbz)^\times \ltimes \mbz/\ell\mbz$, one has
\[
\frob_{\mf{P}_{z_c,i}} = 
\begin{cases}
(1, 0) & \text{ if $p$ splits completely in $K_c$ and $\mf{P}_{z_c,i}$ is any prime above $p$}\\
(1, i ) & \text{ if $p\mc{O}_{K_c} = \prod_{i \in (\mbz/\ell\mbz)^\times} \mf{P}_{z_c,i}$, where each $\mf{P}_{z_c,i}$ is as in \eqref{defofmfPi} and is prime.}
\end{cases}
\]
\end{lemma}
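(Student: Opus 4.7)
The plan is to compute the two coordinates of $\frob_{\mf{P}_{z_c,i}} = (a,b) \in (\mbz/\ell\mbz)^\times \ltimes \mbz/\ell\mbz$ separately, each via the defining congruence $\frob_{\mf{P}}(x) \equiv x^p \pmod{\mf{P}}$ for $x \in \mc{O}_{K_c}$. Unramifiedness of $p$ in $K_c$ is immediate from Lemma \ref{kummerdisclemma} (applied with $d = 1$): the hypotheses $p \equiv 1 \pmod{\ell}$ and $\ord_p(c) = 0$ imply that $p$ divides none of $\ell$, $\num(c)$, or $\den(c)$, so $p \nmid d_{K_c}$.

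For the first coordinate, evaluating $\frob_{\mf{P}}$ at $\zeta_\ell$ and using that $t^\ell - 1$ has $\ell$ distinct roots in the residue field (because $\ell \neq p$) forces $a \equiv p \equiv 1 \pmod{\ell}$. If $p$ splits completely, the Frobenius is trivial, so $(a,b) = (1,0)$, handling the first case.

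For the non-split case, the crux is the identity $(c^{1/\ell})^p = c^{1/\ell} \cdot c^{(p-1)/\ell}$, valid in $\mc{O}_{K_c}$ because $\ell \mid p-1$. The plan is to reduce both $\frob_{\mf{P}}(c^{1/\ell}) = c^{1/\ell} \zeta_\ell^b$ and $(c^{1/\ell})^p$ modulo $\mf{P}_{z_c,i}$ and read off $b$. Letting $\tilde{\theta}_c$ denote the image of $c^{1/\ell}$ in the residue field, this image lies in the coset $\theta_c \mu_\ell$, and because $\ell \mid p-1$ the quantity $\tilde{\theta}_c^{p-1}$ is independent of the representative and equals $\theta_c^{p-1} = \hat{z}_c$, by the normalization \eqref{normalizationofzc}. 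Consequently $(c^{1/\ell})^p$ reduces to $\tilde{\theta}_c \cdot \hat{z}_c$. On the other hand, the definition \eqref{defofmfPi} of $\mf{P}_{z_c,i}$ gives $\zeta_\ell \equiv z_c^{i^*} \pmod{\mf{P}_{z_c,i}}$, so $c^{1/\ell} \zeta_\ell^b$ reduces to $\tilde{\theta}_c \cdot \hat{z}_c^{i^* b}$. Cancelling the unit $\tilde{\theta}_c$ yields $\hat{z}_c^{i^* b} = \hat{z}_c$ in $\mbf_p^\times$, and since $\hat{z}_c$ has order $\ell$, we conclude $i^* b \equiv 1 \pmod{\ell}$, i.e., $b = i$, as required.

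The main subtlety I anticipate lies in the normalization \eqref{normalizationofzc} of $z_c$: without pinning down $\hat{z}_c = \theta_c^p/\theta_c$ specifically, one would only recover $b$ up to a unit factor depending on the ambient choice of generator of $\mu_\ell$, and the clean formula $b = i$ would be disrupted. A secondary check is that $\tilde{\theta}_c^{p-1}$ is genuinely well-defined in $\mbf_p^\times$ despite $\theta_c$ only being determined up to an element of $\mu_\ell$, which holds because $\ell \mid p-1$. Both points are easy to verify once noticed; everything else amounts to tracing definitions.
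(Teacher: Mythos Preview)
Your proof is correct and follows essentially the same route as the paper's: both arguments evaluate the defining Frobenius congruence at $c^{1/\ell}$ and $\zeta_\ell$, use the definition of $\mf{P}_{z_c,i}$ to reduce $\zeta_\ell$ to $\hat{z}_c^{i^*}$, and invoke the normalization \eqref{normalizationofzc} to obtain $\hat{z}_c^{i^* b} = \hat{z}_c$, whence $b = i$. The only cosmetic difference is that the paper names an explicit isomorphism $\varpi_{z_c,i} : \mc{O}_{K_c}/\mf{P}_{z_c,i} \to \mbf_p(\theta_c)$ sending $c^{1/\ell}$ to $\theta_c$ itself, whereas you allow the image $\tilde{\theta}_c$ to lie anywhere in the coset $\theta_c \mu_\ell$ and check that this ambiguity washes out after raising to the $(p-1)$-st power.
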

\begin{proof}
We need only concern ourselves with the case that $p$ does not split completely in $K_c$.  In this case, consider the ring homomorphism $\pi_{z_c,i} : \mc{O}_{\mbq(\zeta_\ell)} \longrightarrow \mbf_p$, induced by $\ds \zeta_\ell \mapsto z_c^{i^*} \pmod{p}$.  Note that
\[
\ker \pi_{z_c,i} = \mf{p}_{z_c,i} \quad \text{ and } \quad \gs_{a}( \mf{p}_{z_c,i})  = \mf{p}_{z_c,ai},
\]
where $\gs_a \mapsto a$ under $\ds \gal(\mbq(\zeta_n)/\mbq) \simeq (\mbz/n\mbz)^\times$.  Since $\mc{O}_{K_c}/ \mf{P}_{z_c,i} \simeq \mbf_p(\theta_c)$ in this case, one may extend $\pi_{z_c,i}$ to a ring homomorphism $\ds \varpi_{z_c,i} : \mc{O}_{K_c} \longrightarrow \mbf_p(\theta_c)$ for which $\varpi_{z_c,i}(c^{1/\ell}) = \theta_c$.  Consider the induced isomorphism
\[
\varpi_{z_c,i} : \mc{O}_{K_c} / \mf{P}_{z_c,i} \longrightarrow \mbf_p(\theta_c).
\]
By definition of $\frob_{\mf{P}_{z_c,i}}$, one has $\varpi_{z_c,i} \circ \frob_{\mf{P}_{z_c,i}} \circ \varpi_{z_c,i}^{-1}(\theta_c) = \theta_c^p$.  On the other hand, if $\frob_{\mf{P}_{z_c,i}} \mapsto (1,b)$ under \eqref{kummerisomorphism}, then by \eqref{normalizationofzc}, we have
\[
z_c \theta_c = \theta_c^p = \varpi_{z_c,i} ( \frob_{\mf{P}_{z_c,i}} ( \varpi_{z_c,i}^{-1}(\theta_c) ))= \varpi_{z_c,i} ( \frob_{\mf{P}_{z_c,i}} ( c^{1/\ell} \pmod{\mf{P}_{z_c,i}} )) = \varpi_{z_c,i}( \zeta_\ell^b c^{1/\ell}) = z_c^{i^*b} \theta_c.
\]
Thus, one finds that $b = i$, proving the lemma.
\end{proof}

The next corollary of Lemma \ref{frobinkummerlemma} is essential in what follows.  We introduce the notation
\begin{equation*} 
K = \mbq(\zeta_\ell, c_1^{1/\ell}, c_2^{1/\ell}, \dots, c_k^{1/\ell}, d_1^{1/\ell}, d_2^{1/\ell}, \dots d_k^{1/\ell}) =: \mbq(\zeta_\ell, {\bf{c}}^{1/\ell}, {\bf{d}}^{1/\ell} ),
\end{equation*}
where ${\bf{c}}, {\bf{d}} \in \mbq^k$ are the obvious vectors, and
\begin{equation} \label{newformofgal}
\begin{split}
\gal(K/\mbq) \hookrightarrow & (\mbz/\ell\mbz)^\times \ltimes (\mbz/\ell\mbz)^{2k} \\
= & (\mbz/\ell\mbz)^\times \ltimes \left( (\mbz/\ell\mbz)^k \times (\mbz/\ell\mbz)^k \right),
\end{split}
\end{equation}
so that elements of $\gal(K/\mbq)$ may be written in the form $(a,{\bf{b}}, {\bf{f}})$ with ${\bf{b}}, {\bf{f}} \in (\mbz/\ell\mbz)^k$.  We denote by $\mc{C}_{2k} \subseteq \gal(K/\mbq)$ the subset
\begin{equation} \label{defofmcC}
\mc{C}_{2k} := \{ (1, {\bf{b}}, {\bf{f}} ) \in \gal(K/\mbq) : \; {\bf{f}} = \gl {\bf{b}} \text{ for some $\gl \in \mbz/\ell\mbz$ } \}.
\end{equation}
Note that, under $\gal(K/\mbq) \hookrightarrow (\mbz/\ell\mbz)^\times \ltimes (\mbz/\ell\mbz)^{2k}$, the subset $\mc{C}_{2k}$ is stable by $\gal(K/\mbq)$-conjugation.
\begin{cor} \label{keycor}
Let $K = \mbq(\zeta_\ell, {\bf{c}}^{1/\ell}, {\bf{d}}^{1/\ell})$ and assume the remaining notation just introduced.  Let $p$ be any prime number satisfying 
\[
\forall i \in \{ 1, 2, \dots, k \}, \quad \ord_p ( c_i) = \ord_p( d_i ) = 0
\]
and $p \equiv 1 \pmod{\ell}$.
Suppose further that, for some fixed $k_p \in \mbz/(p-1)\mbz$, one has
\[
\forall i \in  \{ 1, 2, \dots, k \}, \quad d_i \equiv c_i^{k_p} \pmod{p}.
\]
 Then $p$ is unramified in $K$ and, under the embedding \eqref{newformofgal}, the Frobenius class $\frob_p \subseteq \gal(K/\mbq)$ satisfies
\[
\frob_p \subseteq \mc{C}_{2k}.
\]
\end{cor}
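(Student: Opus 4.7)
The plan is to pick a prime $\mf{P}$ of $\mc{O}_K$ lying over $p$ and identify the image of $\frob_\mf{P}$ under \eqref{newformofgal} directly by reducing modulo $\mf{P}$. Unramification of $p$ in $K$ is immediate from Lemma \ref{kummerdisclemma}: the hypotheses $\ord_p(c_i) = \ord_p(d_i) = 0$ together with $p \neq \ell$ (which follows from $p \equiv 1 \pmod{\ell}$) ensure that none of the primes listed as possible divisors of $d_K$ equal $p$, so $p \nmid d_K$. The restriction of $\frob_\mf{P}$ to $\mbq(\zeta_\ell)$ is the Frobenius of the prime below, which by Lemma \ref{cyclotomicfroblemma} corresponds to $p \equiv 1 \pmod{\ell}$; hence the first coordinate of $\frob_\mf{P}$ under \eqref{newformofgal} is $a = 1$.

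For the central computation, let $\varpi : \mc{O}_K \to \mc{O}_K/\mf{P}$ be the reduction map and set $z := \varpi(\zeta_\ell) \in \mbf_p^\times$ (a primitive $\ell$-th root of unity in $\mbf_p$, since $p \equiv 1 \pmod{\ell}$), $\theta_{c_i} := \varpi(c_i^{1/\ell})$, and $\theta_{d_i} := \varpi(d_i^{1/\ell}) \in \ol{\mbf_p}^\times$. Then $\theta_{c_i}^\ell \equiv c_i$ and $\theta_{d_i}^\ell \equiv d_i \pmod{p}$, and choosing any integer lift of $k_p$, the hypothesis $d_i \equiv c_i^{k_p} \pmod{p}$ forces $\theta_{d_i} = z^{m_i} \theta_{c_i}^{k_p}$ for some $m_i \in \mbz/\ell\mbz$, since the two sides are $\ell$-th roots of the same element of $\mbf_p^\times$. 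The defining property $\varpi \circ \frob_\mf{P} = (\cdot)^p \circ \varpi$, applied to $c_i^{1/\ell}$ and $d_i^{1/\ell}$ together with the identifications $\frob_\mf{P}(c_i^{1/\ell}) = c_i^{1/\ell}\zeta_\ell^{b_i}$ and $\frob_\mf{P}(d_i^{1/\ell}) = d_i^{1/\ell}\zeta_\ell^{f_i}$, then yields
\[
z^{b_i} = \theta_{c_i}^{p-1} \quad \text{and} \quad z^{f_i} = \theta_{d_i}^{p-1}.
\]
Raising the relation $\theta_{d_i} = z^{m_i}\theta_{c_i}^{k_p}$ to the $(p-1)$-th power and using $z^{p-1} = 1$ in $\mbf_p^\times$ gives $\theta_{d_i}^{p-1} = \theta_{c_i}^{k_p(p-1)} = z^{k_p b_i}$, and since $z$ has order $\ell$ we conclude $f_i \equiv k_p b_i \pmod{\ell}$. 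Thus ${\bf{f}} = \gl\,{\bf{b}}$ with $\gl \equiv k_p \pmod{\ell}$, so $\frob_\mf{P} \in \mc{C}_{2k}$; since $\mc{C}_{2k}$ is closed under conjugation, $\frob_p \subseteq \mc{C}_{2k}$.

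The only real subtlety is the unknown factor $z^{m_i}$ in the relation $\theta_{d_i} = z^{m_i}\theta_{c_i}^{k_p}$: a priori it obstructs any direct comparison of $b_i$ with $f_i$. The hypothesis $p \equiv 1 \pmod{\ell}$ puts $z$ in $\mbf_p^\times$, where $z^{p-1} = 1$, so this ambiguity is exactly wiped out by the $(p-1)$-th power that Frobenius imposes on the residue field. Everything else is bookkeeping based on the Kummer--Frobenius identification already carried out in Lemma \ref{frobinkummerlemma}, applied uniformly across all the $c_i$ and $d_i$.
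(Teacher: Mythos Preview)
Your proof is correct and in fact cleaner than the paper's. Both arguments fix a prime $\mf{P}$ over $p$, observe that the first coordinate of $\frob_\mf{P}$ is $1$ since $p$ splits completely in $\mbq(\zeta_\ell)$, and then compute the remaining coordinates by reducing the chosen $\ell$-th roots modulo $\mf{P}$. The paper, however, routes everything through Lemma~\ref{frobinkummerlemma} one coordinate at a time: for each index $j$ it restricts to $K_{c_j}$ and $K_{d_j}$, distinguishes the cases $c_j \pmod{p} \in (\mbf_p^\times)^\ell$ versus not (and $\ell \mid k_p$ versus not), and in the nontrivial case tracks the two normalized generators $z_{c_j}$ and $z_{d_j} = z_{c_j}^{k_p}$ of $\mu_\ell$ via the reindexing rule \eqref{transformrule} to compare the Frobenius labels. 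Your argument bypasses all of this by working with a single $z = \varpi(\zeta_\ell)$ and exploiting the identity $z^{p-1} = 1$ to annihilate the unknown $\mu_\ell$-factor $z^{m_i}$ in $\theta_{d_i} = z^{m_i}\theta_{c_i}^{k_p}$; this handles every case uniformly and yields $f_i \equiv k_p b_i \pmod{\ell}$ in one stroke. (A tiny point you gloss over: $c_i^{1/\ell}$ need not lie in $\mc{O}_K$, but since $\ord_p(c_i) = 0$ it is a $\mf{P}$-unit and the Frobenius congruence extends to it, e.g.\ by clearing denominators with an integer prime to $p$.) Note also that despite your closing sentence, you do not actually invoke Lemma~\ref{frobinkummerlemma}; your residue-field computation $z^{b_i} = \theta_{c_i}^{p-1}$ rederives its content directly.
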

\begin{proof}
Note that, for any vector ${\bf{w}} = (w_1, w_2, \dots, w_m) \in \mbq^m$, the diagram
\begin{equation*} 
\begin{CD}
\gal(\mbq(\zeta_\ell, {\bf{w}}^{1/\ell})/\mbq) @>>> (\mbz/\ell\mbz)^\times \ltimes (\mbz/\ell\mbz)^m \\
@V\text{res}VV @V\pi_jVV \\
\gal(\mbz(\zeta_\ell,w_j^{1/\ell})/\mbq) @>>> (\mbz/\ell\mbz)^\times \ltimes \mbz/\ell\mbz
\end{CD}
\end{equation*}
commutes, where $\pi_j({\bf{w}}) := w_j$.  Taking any prime $p$ as in the statement of the corollary, $p$ is unramified in $K$, and we fix a prime $\mf{P}$ of $K$ lying over $p$.   By the discussion preceding Lemma \ref{frobinkummerlemma}, for any multiplicative generator $z \in \mu_\ell \subseteq \mbf_p^\times$ we may find  $\ds i \in (\mbz/\ell\mbz)^\times$ for which 
\begin{equation} \label{defofpzi}
\mf{P} \cap \mc{O}_{\mbq(\zeta_\ell)} = \mf{p}_{z,i}.
\end{equation}
Let us fix an index $j \in \{ 1, 2, \dots, k \}$ and put $c := c_j$ and $d := d_j$.  Furthermore, denote by
\[
\mf{P}_{c} := \mf{P} \cap \mc{O}_{K_{c}} \quad \text{ and } \quad \mf{P}_{d} := \mf{P} \cap \mc{O}_{K_{d}}
\]
 the corresponding primes of $K_{c} := \mbq(\zeta_\ell, c^{1/\ell} )$ (resp. of $K_{d} := \mbq(\zeta_\ell, d^{1/\ell} )$) lying under $\mf{P}$.  Now if $c \pmod{p} \in (\mbf_p^\times)^\ell$, then necessarily $d \equiv c^{k_p} \pmod{p} \in (\mbf_p^\times)^\ell$, and by \eqref{splittingcompletelycriterionforc}, we have that $\frob_{\mf{P}_{c}} = (1,0) = \frob_{\mf{P}_{d}}$ (note that this covers the case $c \in (\mbq^\times)^\ell$).  In case $k_p \equiv 0 \pmod{\ell}$, we see by the same reasoning that $\frob_{\mf{P}_{d}} = (1,0)$, and one finds that in any of the above cases, the conclusion of the lemma holds, taking $\gl = 0$ in \eqref{defofmcC}.
 
We now turn to the case $c \pmod{p} \notin (\mbf_p^\times)^\ell$ and $\ell \nmid k_p$.  In this case, we put $z = z_{c}$ in \eqref{defofpzi}, possibly adjusting $i \pmod{\ell}$ appropriately.  Noting that $\theta_{d}$ only depends on $d$ modulo $p$, we find that $\theta_{d} = \theta_c^{k_p}$, and so $z_d = z_c^{k_p}$. Thus, by \eqref{defofmfPi} and \eqref{transformrule}, we find that
\[
\mf{P}_c = \mf{P}_{z_c,i} = \mf{P}_{z_c^{k_p},ik_p} = \mf{P}_{z_d,i k_p}.
\]
Applying Lemma \ref{frobinkummerlemma} and noting that the factor $k_p$ is independent of the index $j$, we have finished the proof.
\end{proof}

In particular, taking ${\bf{c}} = (n_1, n_2) \in \mbn^2$ and ${\bf{d}} = (f(n_1), f(n_2)) \in (\mbq^\times)^2$, we obtain the following corollary.  Recall that $\ds b_{f, {\bf{n}}} :=  \left| \prod_{i=1}^2 n_i \num(f(n_i)) \den(f(n_i)) \right|$.  
\begin{cor} \label{keycorinformneeded}
Suppose that $f : \mbq \longrightarrow \mbq$ is any function, $n_1, n_2 \in \mbn$, and 
$\ell$ is an odd prime number which doesn't divide $b_{f,{\bf{n}}}$. 
Then, with $\ds K = \mbq \left( \zeta_\ell, n_1^{1/\ell}, n_2^{1/\ell}, f(n_1)^{1/\ell}, f(n_2)^{1/\ell} \right)$, one has
\[
p \in S_f \, \text{ and } \, p \equiv 1 \pmod{\ell} \; \Longrightarrow \; \frob_p \subseteq \mc{C}_4 \; \text{ or } \; p \mid b_{f,{\bf{n}}},
\]
where $\mc{C}_4$ is defined by taking $k = 2$ in \eqref{defofmcC}.
\end{cor}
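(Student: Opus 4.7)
The plan is to deduce \corref{keycorinformneeded} as a direct instance of \corref{keycor}, taking $k=2$, ${\bf{c}} = (n_1, n_2)$, and ${\bf{d}} = (f(n_1), f(n_2))$, with the alternative ``$p \mid b_{f,{\bf{n}}}$'' in the conclusion absorbing precisely the primes ruled out by the valuation hypotheses of \corref{keycor}. Concretely, I would fix a prime $p$ with $p \in S_f$ and $p \equiv 1 \pmod{\ell}$, and immediately dispatch the case $p \mid b_{f,{\bf{n}}}$ (where the claimed disjunction holds trivially).

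Assuming $p \nmid b_{f,{\bf{n}}}$, I would unpack the definition
\[
b_{f,{\bf{n}}} = \left| n_1 n_2 \num(f(n_1)) \den(f(n_1)) \num(f(n_2)) \den(f(n_2)) \right|
\]
to read off that $\ord_p(n_i) = 0$ and $\ord_p(f(n_i)) = 0$ for $i \in \{1,2\}$, which is the first hypothesis of \corref{keycor}. Next, applying the definition of $S_f$, I would extract the exponent $k_p \in \mbz/(p-1)\mbz$ for which $f(\alpha) \equiv \alpha^{k_p} \pmod{p}$ for every $\alpha \in \mbz_{(p)}^\times$. Specializing $\alpha$ to $n_1$ and $n_2$ (which are in $\mbz_{(p)}^\times$ by the previous step) gives the power-map relations $f(n_i) \equiv n_i^{k_p} \pmod{p}$, which is the remaining hypothesis of \corref{keycor}. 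Invoking that corollary then delivers $\frob_p \subseteq \mc{C}_4$.

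Since each step amounts to a direct check of a hypothesis already set up in \corref{keycor}, I anticipate no real obstacle; the statement is essentially a repackaging of \corref{keycor} in the language of $S_f$ and $b_{f,{\bf{n}}}$. Note that the assumption $\ell \nmid b_{f,{\bf{n}}}$ is not explicitly invoked in this verification — it is imposed for consistency with the later application of \thmref{effectivechebotarevthm}, where it will combine with \corref{lowerdegreeboundcor} to guarantee the Kummer extension $K$ has the expected Galois structure $(\mbz/\ell\mbz)^\times \ltimes (\mbz/\ell\mbz)^4$ needed for the density computation.
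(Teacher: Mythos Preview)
Your proposal is correct and matches the paper's approach exactly: the paper introduces this corollary with the single sentence ``In particular, taking ${\bf{c}} = (n_1, n_2) \in \mbn^2$ and ${\bf{d}} = (f(n_1), f(n_2)) \in (\mbq^\times)^2$, we obtain the following corollary,'' which is precisely your specialization of \corref{keycor}. One small imprecision in your closing aside: the hypothesis $\ell \nmid b_{f,{\bf{n}}}$ does not by itself force $\gal(K/\mbq) \simeq (\mbz/\ell\mbz)^\times \ltimes (\mbz/\ell\mbz)^4$; as Lemma~\ref{chebotarevboundlemma} makes explicit, the degree $[K:\mbq(\zeta_\ell)]$ may be either $\ell^3$ or $\ell^4$, and both cases are handled there.
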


\subsection{Proof of Proposition \ref{secondsumboundprop}} \label{proofofsecondpropositionsection}

We now assume that $f$ is not a global power map, and define the constant $a_f > 0$ by
\[
a_f := 
\begin{cases}
\min \{ \max \{ \gd_{f,{\bf{n}}}, e^{b_{f,{\bf{n}}}} \} : \; {\bf{n}} \in \mc{N}_f \} & \text{ if } \mc{N}_f \neq \emptyset \\
1 & \text{ if } \mc{N}_f = \emptyset
\end{cases}
\]
(since $\gd_{f,{\bf{n}}} \geq 2$, we see that the minimum exists).  In case $|S_f| = \infty$, by Corollary \ref{lowerdegreeboundcor} we have that $\mc{N}_f \neq \emptyset$, so we may pick ${\bf{n}} \in \mc{N}_f$ for which 
\begin{equation*} 
a_f = \max \{ \gd_{f,{\bf{n}}}, e^{b_{f,{\bf{n}}}} \},
\end{equation*}
and then apply Theorem \ref{effectivechebotarevthm} with $K = \mbq(\zeta_\ell, n_1^{1/\ell}, n_2^{1/\ell}, f(n_1)^{1/\ell}, f(n_2)^{1/\ell} )$.  Note that in particular, provided $\ell > a_f$, by Corollary \ref{keycorinformneeded}, one has
\begin{equation} \label{kummercalc0}
\sum_{{\begin{substack} {p \in S_f(x) \\ p \equiv 1 \pmod{\ell}} \end{substack}}} 1 \leq \pi(x; K/\mbq, \mc{C}_4) + O(\nu(b_{f,{\bf{n}}})).
\end{equation}
Our assumption that
\begin{equation} \label{kummerboundforY}
Z \leq  \left( \frac{ \log x }{  (6 c_2 \log \log x )^2} \right)^{1/15}
\end{equation}
implies that, for $x$ large enough, one has $\sqrt{\log x / Z^5} \geq  6c_2 \cdot Z^{5}\log Z$.  By Corollary \ref{kummerdiscboundcor}, $\ell \in [Y,Z)$ and $a_f < Y$ guarantee that \eqref{xbigenough} holds in this case.   Thus, for $Y > a_f$ and $\ell \in [Y,Z)$, one has
\begin{equation} \label{kummercalc1}
\begin{split}
\pi(x; K/\mbq, \mc{C}_4 ) & = \frac{ | \mc{C}_4 |}{ | \gal(K/\mbq) |} \cdot \pi(x) + O \left( | \mc{C}_4 | \cdot x \cdot \exp\left( -c_1 \sqrt{\frac{\log x}{[K : \mbq ]}} \right) \right) \\
& \ll \frac{ | \mc{C}_4 |}{ | \gal(K/\mbq) |} \cdot \pi(x) + \ell^3 \cdot x \cdot \exp \left( -c_1 \sqrt{\frac{\log x}{Z^5}} \right).
\end{split}
\end{equation}
The following lemma bounds the first term above.
\begin{lemma} \label{chebotarevboundlemma}
Suppose that $f : \mbq \longrightarrow \mbq$ is any function, let $n_1, n_2 \in \mbn$, let $\ell$ be an odd prime, and let
$\ds K := \mbq \left( \zeta_\ell, n_1^{1/\ell}, n_2^{1/\ell}, f(n_1)^{1/\ell}, f(n_2)^{1/\ell} \right)$.  Suppose that
\begin{equation} \label{ellcubedassumption}
[\mbq \left( \zeta_\ell, n_1^{1/\ell}, n_2^{1/\ell}, f(n_1)^{1/\ell} \right) : \mbq(\zeta_\ell) ] = \ell^3.  
\end{equation}
Then one has
\[
\frac{ | \mc{C}_4 |}{ | \gal(K/\mbq) |} \; \leq \; \frac{2}{\ell(\ell - 1)},
\]
where $\mc{C}_4$ is defined by taking $k = 2$ in \eqref{defofmcC}.
\end{lemma}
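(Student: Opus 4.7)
The plan is to determine $|\gal(K/\mbq)|$ and count $|\mc{C}_4|$ directly in each of two possible cases. Since $f(n_2)^{1/\ell}$ generates an extension of degree $1$ or $\ell$ over the subfield of \eqref{ellcubedassumption}, the hypothesis forces $[K : \mbq(\zeta_\ell)] \in \{\ell^3, \ell^4\}$ and correspondingly $|\gal(K/\mbq)| \in \{(\ell-1)\ell^3, (\ell-1)\ell^4\}$. In each case one embeds $\gal(K/\mbq)$ into $(\mbz/\ell\mbz)^\times \ltimes (\mbz/\ell\mbz)^4$ via \eqref{newformofgal} and must identify the image precisely in order to count $\mc{C}_4$.

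In the ``generic'' case $[K : \mbq(\zeta_\ell)] = \ell^4$, the embedding is an isomorphism. Elements of $\mc{C}_4$ are parameterized by a nonzero vector $\mathbf{b} \in (\mbz/\ell\mbz)^2$ together with a scalar $\lambda \in \mbz/\ell\mbz$ (determining $\mathbf{f} = \lambda \mathbf{b}$), plus the single element with $\mathbf{b} = \mathbf{f} = \mathbf{0}$. A direct count yields $|\mc{C}_4| = 1 + (\ell^2 - 1)\ell = \ell^3 - \ell + 1$, and the inequality $(\ell^3 - \ell + 1)/((\ell-1)\ell^4) \leq 2/(\ell(\ell-1))$ is immediate.

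In the remaining case $[K : \mbq(\zeta_\ell)] = \ell^3$, the element $f(n_2)^{1/\ell}$ already lies in the subfield appearing in \eqref{ellcubedassumption}, so by Kummer theory there exist $a, b, c \in \mbz/\ell\mbz$ with $f(n_2) \cdot n_1^{-a} n_2^{-b} f(n_1)^{-c} \in (\mbq^\times)^\ell$. Applying any $\sigma \in \gal(K/\mbq)$ with image $(a_\sigma, (b_1, b_2), (f_1, f_2))$ to an $\ell$-th root of this identity yields the single linear relation $f_2 \equiv ab_1 + bb_2 + cf_1 \pmod{\ell}$, and this exactly cuts out the image of $\gal(K/\mbq)$ inside $(\mbz/\ell\mbz)^\times \ltimes (\mbz/\ell\mbz)^4$. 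Imposing simultaneously this relation and the $\mc{C}_4$-condition $(f_1, f_2) = \lambda(b_1, b_2)$ reduces to the single equation $(a + c\lambda)b_1 + (b - \lambda)b_2 \equiv 0 \pmod{\ell}$. For each fixed $\lambda$, the number of nonzero solutions $(b_1, b_2)$ is $\ell - 1$ unless both coefficients vanish — which happens only if $\lambda = b$ and $a + bc \equiv 0$, in which case all $\ell^2 - 1$ nonzero pairs qualify. Summing over $\lambda$ and including the zero element gives $|\mc{C}_4| \leq (\ell^2 - 1) + (\ell-1)(\ell-1) + 1 = 2\ell^2 - 2\ell + 1 < 2\ell^2$, and dividing by $|\gal(K/\mbq)| = (\ell-1)\ell^3$ yields the desired bound.

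The main conceptual step is extracting the Kummer relation and deducing its effect on the Galois image in the non-generic case; the rest is elementary linear counting in $(\mbz/\ell\mbz)^2$.
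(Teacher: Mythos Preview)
Your proof is correct and follows essentially the same approach as the paper: split into the two cases $[K:\mbq(\zeta_\ell)] \in \{\ell^3,\ell^4\}$, identify the image of $\gal(K/\mbq)$ in $(\mbz/\ell\mbz)^\times \ltimes (\mbz/\ell\mbz)^4$ via the single Kummer relation in the non-generic case, and count solutions of the resulting linear equation in $(b_1,b_2)$ for each $\lambda$. The paper phrases the relation via a vector $\mathbf{d}=(\mathbf{d}_1,\mathbf{d}_2)$ with $d_4\neq 0$ and arrives at the equivalent equation $\mathbf{b}\cdot(\mathbf{d}_1+\lambda\mathbf{d}_2)=0$, getting the slightly looser bound $|\mc{C}_4|\le \ell(2\ell-1)$ (it does not separate out the overcounting at $\mathbf{b}=\mathbf{0}$ as you do), but the argument and the key observation that $\mathbf{d}_2\neq\mathbf{0}$ forces at most one ``bad'' $\lambda$ are identical.
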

\begin{proof}
By hypothesis, if $\ell$ is large enough then either
$
\gal(K/\mbq) \simeq (\mbz/\ell\mbz)^\times \ltimes (\mbz/\ell\mbz)^4
$
or
\[
\gal(K/\mbq) \simeq (\mbz/\ell\mbz)^\times \ltimes (\mbz/\ell\mbz \cdot {\bf{d}})^\perp,
\]
where ${\bf{d}} = (d_1, d_2, d_3, d_4) \in (\mbz/\ell\mbz)^4$ and
\begin{equation} \label{d4neq0}
d_4 \neq 0,
\end{equation}
which follows from the hypothesis \eqref{ellcubedassumption}.  If $\gal(K/\mbq) \simeq (\mbz/\ell\mbz)^\times \ltimes (\mbz/\ell\mbz)^4$, then directly from \eqref{defofmcC} one finds that
\[
| \mc{C}_4 | \leq \ell^3,
\]
and the conclusion of the lemma follows.  If on the other hand
\[
\gal(K/\mbq) \simeq (\mbz/\ell\mbz)^\times \ltimes (\mbz/\ell\mbz \cdot {\bf{d}})^\perp,
\]
then, writing ${\bf{d}} = ({\bf{d}}_1, {\bf{d}}_2)$ with ${\bf{d}}_i \in (\mbz/\ell\mbz)^2$, we have that
\[
\begin{split}
\mc{C}_4 & = \{ (1, {\bf{b}}, \gl {\bf{b}}) : \; ({\bf{b}},\gl) \in (\mbz/\ell\mbz)^3, \, {\bf{b}} \cdot {\bf{d}}_1 + \gl {\bf{b}} \cdot {\bf{d}}_2 = 0 \}  \\
& = \{ (1, {\bf{b}}, \gl {\bf{b}}) : \; ({\bf{b}},\gl) \in (\mbz/\ell\mbz)^3, \, {\bf{b}} \cdot ( {\bf{d}}_1 + \gl {\bf{d}}_2) = 0 \}.
\end{split}
\]
Consider the equation
\begin{equation} \label{bdefiningequation}
{\bf{b}} \cdot ( {\bf{d}}_1 + \gl {\bf{d}}_2) = 0.
\end{equation}
By \eqref{d4neq0} we see that ${\bf{d}}_2 \neq {\bf{0}} \in (\mbz/\ell\mbz)^2$, and so $ {\bf{d}}_1 + \gl {\bf{d}}_2 = {\bf{0}}$ for at most one $\gl \in \mbz/\ell\mbz$.  For such a $\gl$, one counts $\ell^2$ solutions ${\bf{b}} \in (\mbz/\ell\mbz)^2$ to the equation \eqref{bdefiningequation}, while for each of  the other $\ell - 1$ values of $\gl$ one counts $\ell$ solutions.  Thus, one has
\[
| \mc{C}_4 | \leq \ell(2\ell - 1),
\]
and the conclusion of the lemma follows in this case as well.
\end{proof}
Inserting the result of Lemma \ref{chebotarevboundlemma} into \eqref{kummercalc1} and using \eqref{kummercalc0} we obtain, that, for $\ell \in [Y,Z)$ we have
\[
\sum_{{\begin{substack} {p \in S_f(x) \\ p \equiv 1 \pmod{\ell}} \end{substack}}} 1 \; \ll \; \frac{1}{\ell^2} \cdot \pi(x) + \ell^3 \cdot x \cdot \exp \left( -c_1 \sqrt{\frac{\log x}{Z^5}} \right) + \nu(b_{f,{\bf{n}}}),
\]
provided \eqref{kummerboundforY} holds.  Summing over primes $\ell \in [Y,Z)$, we obtain
\[
\sum_{Y \leq \ell < Z} \sum_{{\begin{substack} {p \in S_f(x) \\ p \equiv 1 \pmod{\ell}} \end{substack}}} 1 \; \ll \; \frac{1}{Y \log Y} \cdot \pi(x) + \frac{Z^4}{\log Z} \cdot x \cdot \exp \left( -c_1 \sqrt{\frac{\log x}{Z^5}} \right) + \nu(b_{f,{\bf{n}}}), 
\]
By virtue of the bounds \eqref{kummerboundforY} and \eqref{expboundpoweroflog}, we see that the second remainder term satisfies
\[
\frac{Z^4}{\log Z} \cdot x \cdot \exp \left( -c_1 \sqrt{\frac{\log x}{Z^5}} \right) \; \ll_A \; \frac{x}{(\log x)^A}
\]
for any $A > 0$, and since $Y < Z$, this observation 
finishes the proof of Proposition \ref{secondsumboundprop}.

\begin{remark}
The hypothesis in Lemma \ref{chebotarevboundlemma} that $f$ not be a global power map is critical.  Indeed, if $f(\ga) = \ga^k$ for all $\ga \in \mbq$, then (e.g. provided $n_2$ is multiplicatively independent from $n_1$) under \eqref{newformofgal} one has
\[
\gal(K/\mbq) = (\mbz/\ell\mbz)^\times \ltimes \{ ({\bf{b}}, e{\bf{b}}) : \; {\bf{b}} \in (\mbz/\ell\mbz)^2 \}.
\]
In particular, one finds that
\[
\frac{ | \mc{C}_4 | }{| \gal(K/\mbq) | } = 
\frac{ | \{ (1, {\bf{b}}, e {\bf{b}}) : \; {\bf{b}} \in (\mbz/\ell\mbz)^2 \} |}{| (\mbz/\ell\mbz)^\times \ltimes \{ ( {\bf{b}}, e{\bf{b}}) : \; {\bf{b}} \in (\mbz/\ell\mbz)^2 \} | } = \frac{1}{\ell - 1},
\] 
and our method of proof fails for this case (as it should).
 \end{remark}
 
 \section{Acknowledgments}
 
This paper was motivated by a question posed by C. Khare in connection with compatible systems of one-dimensional Galois representations.  I thank Professor Khare for sharing this interesting question, and also Professor R. Khan, who originally communicated it to me.  Some of the research leading to this paper was done during a research stay at the Universit\"{a}t G\"{o}ttingen, and I would like to thank the university for providing a stimulating environment in which to work.  Finally, I would also like to thank Professor S. Basarab for stimulating discussions on this topic and for helpful comments.


\begin{thebibliography}{99}

\bibitem{corralesschoof}  C. Corrales and R. Schoof.  The support problem and its elliptic analogue, \emph{J. Number Theory} \textbf{64} (1997), 276--290.

\bibitem{erdos} P. Erd\H{o}s.  On the distribution function of additive functions, \emph{Ann. of Math.} \textbf{47} no. 2 (1946), 1--20.

\bibitem{subbaraofabrykowski} J. Fabrykowski and M. V. Subbarao.   On a class of Arithmetic functions satisfying a congruence property, \emph{J. Madras Univ.}, \textbf{51} no. 1 (1988), 48--56.

\bibitem{feherphong} J. Feh\'{e}r and B. M. Phong.  On a problem of Fabrykowski and Subbarao concerning quai multiplicative functions satisfying a congruence property, \emph{Acta. Math. Hungar.}, \textbf{89} (2000), 149--159.

\bibitem{frolichtaylor} A. Fr\"{o}lich and M. J. Taylor, \emph{Algebraic Number Theory}, 
Cambridge Studies in Advanced Mathematics \textbf{27}, Cambridge Univ. Press (1991).

\bibitem{guptamurty} R. Gupta and M. R. Murty.  A remark on Artin's conjecture, \emph{Invent. Math.} \textbf{78} no. 1 (1984), 127--130.

\bibitem{heathbrown}  D. R. Heath-Brown. Artin's conjecture for primitive roots, \emph{Quart. J. Math. Oxford} \textbf{37} no. 1 (1986), 27--38.

\bibitem{khareprasad}  C. Khare and D. Prasad.  Reduction of homomorphisms mod $p$ and algebraicity, \emph{J. Number Theory} \textbf{105} (2004), 322--332.

\bibitem{lagariasodlyzko} J. C. Lagarias and A. M. Odlyzko.  Effective versions of the Chebotarev density theorem, in A. Frohlich (ed.) \emph{Algebraic Number Fields}, pp. 409--464, Academic Press, 1977.

\bibitem{lang} S. Lang, \emph{Algebra}, Graduate Texts in Mathematics \textbf{211}, Springer (2002).

\bibitem{ruzsa} I. Ruzsa.  On congruence-preserving functions, \emph{Mat. Lap.} \textbf{22} (1971), 125--134.

\bibitem{serre} J.-P. Serre.  Quelques applications du th\'{e}or\`{e}me de densit\'{e} de Chebotarev. \emph{Publ. Math. I. H. E. S.} \textbf{54} (1981), 123--201.

\bibitem{subbarao} M. V. Subbarao.  Arithmetic functions satisfying a congruence property, \emph{Canad. Math. Bull.} \textbf{9} (1966), 143--146.

\bibitem{tenenbaum} G. Tenenbaum, \emph{Introduction to analytic and probabilistic number theory}, 
Cambridge Studies in Advanced Mathematics \textbf{46}, Cambridge Univ. Press (1995).

\bibitem{washington} L. Washington. \emph{Introduction to Cyclotomic Fields}, Graduate Texts in Mathematics, \textbf{83}. Springer-Verlag, New York, 1982.

\bibitem{zannier} U. Zannier.  On periodic mod $p$ sequences and $G$-functions, \emph{Manuscripta mathematica} \textbf{90} no. 3 (1996), 391--402.

\end{thebibliography}
\end{document}